\newcommand{\leqnomode}{\tagsleft@true}
\newcommand{\reqnomode}{\tagsleft@false}
\numberwithin{equation}{section}
\newtheorem{theorem}{Theorem}[section]
\newtheorem{lemma}[theorem]{Lemma}
\newtheorem{proposition}[theorem]{Proposition}
\newtheorem{remark}[theorem]{Remark}
\newtheorem{theoremletter}{Theorem}
\renewcommand{\rightarrow}{\to}
\newcommand{\ud}{\mathrm{d}}
\title[Coupled systems involving the square root of the Laplacian]{Coupled elliptic systems involving the square root of the Laplacian and Trudinger-Moser critical growth}  
\author[J.M.\ do \'O]{Jo\~ao Marcos do \'O}
\author[JC. \ de Albuquerque]{Jos\'e Carlos de Albuquerque}
\address[J.M. do \'O]{Department of Mathematics,
	Federal University of Para\'{\i}ba
	\newline\indent
	58051-900, Jo\~ao Pessoa-PB, Brazil}
\email{\href{mailto:jmbo@pq.cnpq.br}{jmbo@pq.cnpq.br}}
\address[J.C. de~Albuquerque]{Department of Mathematics, Federal University of Para\'{\i}ba}
\email{\href{mailto:joserre@gmail.com}{joserre@gmail.com}}
\thanks{Corresponding author: J.M. do~\'O}
\thanks{Research supported in part by INCTmat/MCT/Brazil, CNPq and CAPES/Brazil}
\subjclass[2000]{35J50, 35B33, 35Q55}
\keywords{Fractional Schr\"odinger equations; Nehari manifold; lack of compactness; Ground state; Critical growth}
\begin{document}
	
	
	\begin{abstract}
		In this paper we prove the existence of a nonnegative ground state solution to the following class of coupled systems involving Schr\"{o}dinger equations with square root of the Laplacian
		$$
		\left\{
		\begin{array}{lr}
		(-\Delta)^{1/2}u+V_{1}(x)u=f_{1}(u)+\lambda(x)v, & x\in\mathbb{R},\\
		(-\Delta)^{1/2}v+V_{2}(x)v=f_{2}(v)+\lambda(x)u, & x\in\mathbb{R},
		\end{array}
		\right.
		$$
		where the nonlinearities $f_{1}(s)$ and $f_{2}(s)$ have exponential critical growth of the Trudinger-Moser type, the potentials $V_{1}(x)$ and $V_{2}(x)$ are nonnegative and periodic. Moreover, we assume that there exists $\delta\in (0,1)$ such that $\lambda(x)\leq\delta\sqrt{V_{1}(x)V_{2}(x)}$. We are also concerned with the existence of ground states when the potentials are asymptotically periodic. Our approach is variational and based on minimization technique over the Nehari manifold.
	\end{abstract}
	\maketitle

	
	
	
	
	\section{Introduction}	
	
	This paper deals with the existence of ground states to the following class of coupled systems 
	\begin{equation}\label{paper4j0}
	\left\{
	\begin{array}{lr}
	(-\Delta)^{1/2}u+V_{1}(x)u=f_{1}(u)+\lambda(x)v, & x\in\mathbb{R},\\
	(-\Delta)^{1/2}v+V_{2}(x)v=f_{2}(v)+\lambda(x)u, & x\in\mathbb{R},
	\end{array}
	\right. \tag{$S$}
	\end{equation}
	where $(-\Delta)^{1/2}$ denotes the \textit{square root of the Laplacian}, the potentials $V_{1}(x)$, $V_{2}(x)$ are nonnegative and satisfy $\lambda(x)\leq\delta\sqrt{V_{1}(x)V_{2}(x)}$, for some $\delta\in(0,1)$ and for all $x\in\mathbb{R}$. Here we consider the case when $V_{1}(x)$, $V_{2}(x)$ and $\lambda(x)$ are periodic, and also when these functions are asymptotically periodic, that is, the limits of $V_{1}(x)$, $V_{2}(x)$ and $\lambda(x)$ are periodic functions when $|x|\rightarrow+\infty$. Our main goal here is to study the existence of ground states for \eqref{paper4j0}, involving a nonlocal operator when the nonlinearities $f_{1}(u)$, $f_{2}(v)$ have exponential critical growth motivated by a class of Trudinger-Moser inequality introduced by T.~Ozawa (see Theorem~\ref{paper4oz} in the Section~\ref{paper4s1}). 
	
	\subsection{Motivation}
	In order to motivate our results we begin by giving
	a brief survey on this subject. In the last few years, a great attention
	has been focused on the study of problems involving fractional Sobolev spaces and corresponding nonlocal equations, both from a pure mathematical point of view and their concrete applications, since they naturally arise in many different contexts, such as, among the others, obstacle problems, flame propagation, minimal surfaces, conservation laws, financial market, optimization, crystal dislocation, phase transition and water waves, see for instance \cite{guia,caffa} and references therein.
	
	Solutions of System~\eqref{paper4j0} are related with standing wave solutions of the following two-component system of nonlinear Schr\"{o}dinger equations
	\begin{equation}\label{paper4j000}
	\left\{
	\begin{array}{lr}
	i\displaystyle\frac{\partial\psi}{\partial t}=(-\Delta)^{1/2}\psi+V_{1}(x)\psi-f_{1}(\psi)-\lambda(x)\phi, & (t,x)\in\mathbb{R}\times\mathbb{R}^{N},\\
	i\displaystyle\frac{\partial\phi}{\partial t}=(-\Delta)^{1/2}\phi+V_{2}(x)\phi-f_{2}(\phi)-\lambda(x)\psi, & (t,x)\in\mathbb{R}\times\mathbb{R}^{N},
	\end{array}
	\right. 
	\end{equation}
	where $i$ denotes the imaginary unit and $N=1$. For System~\eqref{paper4j000}, a solution of the form $(\psi(x,t),\phi(x,t))=(e^{-it}u(x),e^{-it}v(x))$
	is called \textit{standing wave}. Assuming that $f_{1}(e^{i\theta}u)=e^{i\theta}f_{1}(u)$ and $f_{2}(e^{i\theta}v)=e^{i\theta}f_{2}(v)$, for $u,v\in\mathbb{R}$, it is well known that $(\psi,\phi)$ is a solution of \eqref{paper4j000} if and only if $(u,v)$ solves System~\eqref{paper4j0}. The studying of the existence of standing waves for nonlinear Schr\"{o}dinger	equations arises in various branches of mathematical physics and nonlinear topics, see \cite{fisica,bl,bl2,l1,l2,strauss} and references therein for more complete discussion of this topic.	
	
	
	It is known that when $s\rightarrow1$, the fractional Laplacian $(-\Delta)^{s}$ reduces to the standard Laplacian $-\Delta$, see \cite{guia}. Nonlinear Schr\"{o}dinger equations involving the standard Laplacian have been broadly investigated in many aspects, see for instance \cite{bl,bl2,rabinowitz} and references therein. On the nonlinear elliptic equations involving nonlinearities with critical growth of the Trudinger-Moser type, we refer the readers to \cite{cao,r1,alves,djr,djairo,severo} and references therein.
	
	
	There are some papers that have appeared in the recent years regarding the local case of System~\eqref{paper4j0}, which corresponds to the case $s=1$. For instance, in \cite{czo1}, the authors proved the existence of ground states for critical coupled systems of the form
	\begin{equation}\label{paper4chen}
	\left\{
	\begin{array}{lr}
	-\Delta u+\mu u=|u|^{p-1}u+\lambda v,     & x\in\mathbb{R}^{N},\\
	-\Delta v+\nu v=|v|^{2^{*}-1}v+\lambda u, & x\in\mathbb{R}^{N},
	\end{array}
	\right. 
	\end{equation}  
	where $0<\lambda<\sqrt{\mu\nu}$, $1<p<2^{*}-1$ and $N\geq3$. In \cite{t}, G.~Li and X.H.~Tang proved the existence of ground state for System~\eqref{paper4chen} when $\mu=a(x)$, $\nu=b(x)$ and $\lambda=\lambda(x)$ are continuous functions, $1$-periodic in each $x_{1},x_{2},...,x_{N}$ and satisfy $\lambda^{2}(x)<\sqrt{a(x)b(x)}$, for all $x\in\mathbb{R}^{N}$. For another classes of coupled systems and existence of least energy solutions, we refer the readers to \cite{acr,czo1,czo2,zhang}. Concerning nonlinear elliptic systems involving nonlinearities with critical growth of the Trudinger-Moser type, we refer the readers to \cite{djr,r2,djb,mana,lam2,rabelo} and references therein. Though there has been some works on the existence of ground states for systems involving the standard Laplacian, not much has been done for the class of nonlocal problems involving exponential critical growth.
	
	
	The fractional case, which corresponds to $0<s<1$, has been widely studied motivated by the work of L.~Caffarelli and L.~Silvestre. They proposed transform the nonlocal problem into a local problem via the Dirichlet-Neumann map, see \cite{cs,silvestre}. Recently, the fractional nonlinear Sch\"{o}dinger equation $(-\Delta)^{s}u+V(x)u=f(x,u)$ in $\mathbb{R}^{N}$, $N\geq1$, has been studied under many different assumptions on the potential $V(x)$ and on the nonlinearity $f(x,u)$. In \cite{felmer}, it was proved the existence of positive solutions for the case when $V\equiv1$ and $f(x,u)$ has subcritical growth in the Sobolev sense. In order to overcome the lack of compactness, the authors used a comparison argument. Another way to overcome this difficulty is requiring coercive potentials, that is, $V(x)\rightarrow+\infty$, as $|x|\rightarrow+\infty$. In this direction, the existence of ground states was studied by M.~Cheng, \cite{cheng}, considering a polynomial nonlinearity, and S.~Secchi, \cite{secchi}, considering a more general nonlinearity in the subcritical case. For existence results involving another types of potentials, we refer \cite{chang,feng,pala} and references therein. We point out that in all of these works it were consider dimension $N\geq2$ and nonlinearities with polynomial behavior.
	
	In the fractional case, the critical Sobolev exponent is given by $2^{*}_{s}=2N/(N-2s)$. If $0<s<N/2$, then the fractional Sobolev space $H^{s}(\mathbb{R}^{N})$ is continuously embedded into $L^{q}(\mathbb{R}^{N})$, for all $q\in[2,2^{*}_{s}]$. Thus, similarly the standard Laplacian case, the maximal growth on the nonlinearity $f(x,u)$ which allows to treat nonlinear fractional Sch\"{o}dinger equations variationally in $H^{s}(\mathbb{R}^{N})$ is given by $|u|^{2^{*}_{s}-1}$, when $|u|\rightarrow+\infty$. For $N=1$ and $s\rightsquigarrow1/2$, we have $2^{*}_{s}\rightsquigarrow+\infty$. In this case, $H^{1/2}(\mathbb{R})$ is continuously  embedded into $L^{q}(\mathbb{R})$, for all $q\in[2,+\infty)$. However, $H^{1/2}(\mathbb{R})$ is not continuously embedded into $L^{\infty}(\mathbb{R})$. For more details we refer the reader to \cite{guia} and the bibliographies therein. In this present work, we deal with the limiting case, when $N=1$, $s=1/2$ and nonlinearities with the maximum growth which allows to treat System~\eqref{paper4j0} variationally. For existence results considering the limiting case we refer the readers to \cite{yane,jms,jmm,squassina} and references therein.	
	
	
	Motivated by the above discussion, the current paper has two purposes. First, we are concerned with the existence of nonnegative ground state solution for System~\eqref{paper4j0}, for the case when $V_{1}(x)$, $V_{2}(x)$ and $\lambda(x)$ are periodic. Second, we make use of our first result to study System~\eqref{paper4j0} in the asymptotically periodic case. For that matter, we deal with several difficulties imposed by the class of systems introduced by \eqref{paper4j0}. The first one is the presence of the square root of the Laplacian which is a nonlocal operator, that is, it takes care of the behavior of the solution in the whole space. This class of systems is also characterized by its lack of compactness due to the fact that the nonlinear terms have critical growth and the equations are defined in whole Euclidean space $\mathbb{R}$, which roughly speaking, originates from the invariance of $\mathbb{R}$ with respect to translation and dilation. Furthermore, we have the fact that \eqref{paper4j0} involves strongly coupled fractional Schr\"odinger equations because of the linear terms in the right hand side. To overcome these difficulties, we shall use a variational approach based on Nehari manifold in combination with a Trudinger-Moser type inequality (see Theorem~\ref{paper4oz}) and a version of a lemma due to P.L. Lions for fractional case (see Lemma~\ref{paper4lions}). To our acknowledgment this is the first work where it is proved the existence of ground states for this class of systems under assumptions involving periodic and asymptotically periodic potentials and nonlinearities with exponential critical growth of the Trudinger-Moser type.
	
	
	\subsection{Assumptions and main results} We start this subsection recalling some preliminary concepts about the fractional operator, for a more complete discussion we cite \cite{guia}. For $s\in(0,1)$, the \textit{fractional Laplacian} of a function $u:\mathbb{R}\rightarrow\mathbb{R}$ in the Schwartz class is defined by
	$$
	(-\Delta)^{s}u=\mathcal{F}^{-1}(|\xi|^{2s}(\mathcal{F}u)), \quad \mbox{for all} \hspace{0,2cm} \xi\in\mathbb{R},
	$$
	where $\mathcal{F}$ denotes the Fourier transform
	$$
	\mathcal{F}(u)(\xi)=\frac{1}{\sqrt{2\pi}}\int_{\mathbb{R}}e^{-i\xi\cdot x}u(x)\;\ud x.
	$$
	The particular case when $s=1/2$ its called the \textit{square root of the Laplacian}. We recall the definition of the fractional Sobolev space
	$$
	H^{1/2}(\mathbb{R})=\left\{u\in L^{2}(\mathbb{R}):\int_{\mathbb{R}^{2}}\frac{|u(x)-u(y)|^{2}}{|x-y|^{2}}\;\ud x\;\ud y<\infty\right\},
	$$
	endowed with the natural norm
	$$
	\|u\|_{1/2}=\left([u]_{1/2}^{2}+\int_{\mathbb{R}}u^{2}\;\ud x\right)^{1/2}, \quad
	[u]_{1/2}=\left(\int_{\mathbb{R}^{2}}\frac{|u(x)-u(y)|^{2}}{|x-y|^{2}}\;\ud x\;\ud y\right)^{1/2}
	$$
	where the term $[u]_{1/2}$ is the so-called \textit{Gagliardo semi-norm} of the function $u$. We recall also that
	$$
	\|(-\Delta)^{1/4}u\|_{L^{2}}^{2}=\frac{1}{2\pi}\int_{\mathbb{R}^{2}}\frac{|u(x)-u(y)|^{2}}{|x-y|^{2}}\;\ud x\;\ud y \quad \mbox{for all} \hspace{0,2cm} u\in H^{1/2}(\mathbb{R}).
	$$
	In view of the potentials $V_{1}(x)$ and $V_{2}(x)$, we define the following subspace of $H^{1/2}(\mathbb{R})$
	$$
	E_{i}=\left\{u\in H^{1/2}(\mathbb{R}):\int_{\mathbb{R}}V_{i}(x)u^{2}\;\ud x<\infty\right\} \quad \mbox{for} \hspace{0,2cm} i=1,2,
	$$
	endowed with the inner product
	$$
	(u,v)=\int_{\mathbb{R}}(-\Delta)^{1/4}u(-\Delta)^{1/4}v\;\ud x+\int_{\mathbb{R}}V_{i}(x)u^{2}\;\ud x,
	$$
	to which corresponds the induced norm $\|u\|_{E_{i}}^{2}=(u,u)$. In order to establish a variational approach to treat System~\eqref{paper4j0}, we need to require suitable assumptions on the potentials. For each $i=1,2$, we assume that
	
	\begin{enumerate}[label=($V_{1}$),ref=$(V_{1})$] 
		\item \label{paper4A1}
		$V_{i}(x)$, $\lambda(x)$ are periodic, that is, $V_{i}(x)=V_{i}(x+z)$, $\lambda(x)=\lambda(x+z)$, for all $x\in\mathbb{R}$, $z\in\mathbb{Z}$.  
	\end{enumerate}
	
	\begin{enumerate}[label=($V_{2}$),ref=$(V_{2})$] 
		\item \label{paper4A2}
		$V_{i}\in L^{\infty}_{loc}(\mathbb{R})$, $V_{i}(x)\geq0$ for all $x\in\mathbb{R}$ and 
		$$
		\nu_{i}=\inf_{u\in E_{i}}\left\{\frac{1}{2\pi}[u]_{1/2}^{2}+\int_{\mathbb{R}}V_{i}(x)u^{2}\;\ud x: \int_{\mathbb{R}}u^{2}\;\ud x=1\right\}>0.
		$$			
	\end{enumerate}
	
	\begin{enumerate}[label=($V_{3}$),ref=$(V_{3})$] 
		\item \label{paper4A3}
		$\lambda(x)\leq\delta\sqrt{V_{1}(x)V_{2}(x)}$, for some $\delta\in(0,1)$, for all $x\in\mathbb{R}$.	 
	\end{enumerate}
	
	
	\noindent Using assumption \ref{paper4A2} we can see that the product space $E=E_{1}\times E_{2}$ is a Hilbert space when endowed with the scalar product
	$$
	((u,v),(w,z))=\int_{\mathbb{R}}\left((-\Delta)^{1/4}u(-\Delta)^{1/4}w+V_{1}(x)uw+(-\Delta)^{1/4}v(-\Delta)^{1/4}z+V_{2}(x)vz\right)\;\ud x,
	$$
	to which corresponds the induced norm $\|(u,v)\|_{E}^{2}=\|u\|_{E_{1}}^{2}+\|v\|_{E_{2}}^{2}$.
	
	We suppose here that the nonlinearities $f_{1}(s)$ and $f_{2}(s)$ have \textit{exponential critical growth}. Precisely, for $i=1,2$, given $\alpha_{0}^{i}>0$ we say that $f_{i}:\mathbb{R}\rightarrow\mathbb{R}$ has $\alpha_{0}^{i}$-\textit{critical growth} at $\pm\infty$~if 
	\leqnomode	
	\begin{align}\label{paper4j2}
	\limsup_{s\rightarrow\pm\infty}\frac{|f_{i}(s)|}{e^{\alpha s^{2}}-1}=
	\left\{
	\begin{array}{cll}
	0 & \mbox{if} & \alpha>\alpha^{i}_{0},\\
	+\infty & \mbox{if} & \alpha<\alpha^{i}_{0}.
	\end{array}
	\right.	
	\tag{CG}
	\end{align}
	This notion of \textit{criticality} is motivated by a class of Trudinger-Moser type inequality introduced by T.~Ozawa (see Section~\ref{paper4s1}). Furthermore, we make the following assumptions for each $i=1,2$:
	\begin{enumerate}[label=($H_1$),ref=$(H_1)$] 
		\item \label{paper4f1}
		The function $f_{i}$ belongs to $C^{1}(\mathbb{R})$, is convex on $\mathbb{R}^{+}$, $f_{i}(-s)=-f_{i}(s)$ for $s\in\mathbb{R}$, and 
		$$
		\lim_{s\rightarrow0}\frac{f_{i}(s)}{s}=0.
		$$
	\end{enumerate}
	\begin{enumerate}[label=($H_2$),ref=$(H_2)$] 
		\item \label{paper4f2}
		The function $s\mapsto s^{-1}f_{i}(s)$ is increasing for $s>0$.
	\end{enumerate}
	\begin{enumerate}[label=$(H_3)$,ref=$(H_3)$] 
		\item \label{paper4f3}
		There exists $\mu_{i}>2$ such that
		$$
		0<\mu_{i} F_{i}(s):=\mu_{i}\int_{0}^{s}f_{i}(\tau)\;\ud\tau\leq f_{i}(s)s, \hspace{0,5cm} \mbox{for all} \hspace{0,2cm} s\in\mathbb{R}\backslash\{0\}.
		$$
	\end{enumerate}
	\begin{enumerate}[label=($H_{4}$),ref=$(H_{4})$]
		\item \label{paper4f4}
		There exist $q>2$ and $\vartheta>0$ such that
		$$
		 F_{i}(s)\geq\vartheta|s|^{q}, \quad \mbox{for all} \hspace{0,2cm} s\in\mathbb{R}.
		$$
	\end{enumerate}
	
	
	
	\noindent We are in condition to state our existence theorem for the case when the potentials are periodic.
	
	\begin{theorem}\label{paper4A}
		Suppose that assumptions \ref{paper4A1}-\ref{paper4A3} hold. Assume that for each $i=1,2$ $f_{i}(s)$ and $f_{i}'(s)s$ have $\alpha_{0}^{i}$-critical growth~\eqref{paper4j2} and satisfy \ref{paper4f1}-\ref{paper4f4}. Then, System \eqref{paper4j0} possesses a nonnegative ground state solution provided $\vartheta$ in \ref{paper4f4} is large enough.
	\end{theorem}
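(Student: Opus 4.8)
The plan is to obtain solutions of \eqref{paper4j0} as critical points of the energy functional $I\colon E\to\mathbb{R}$,
\[
I(u,v)=\frac12\|(u,v)\|_E^2-\int_{\mathbb{R}}\lambda(x)uv\,\ud x-\int_{\mathbb{R}}F_1(u)\,\ud x-\int_{\mathbb{R}}F_2(v)\,\ud x,
\]
and to minimize $I$ over the Nehari manifold $\mathcal{N}=\{(u,v)\in E\setminus\{0\}:I'(u,v)(u,v)=0\}$. The key algebraic fact is that \ref{paper4A3} makes the quadratic form $Q(u,v):=\|(u,v)\|_E^2-2\int_{\mathbb{R}}\lambda(x)uv\,\ud x$ equivalent to $\|(u,v)\|_E^2$: since $2\int\lambda|uv|\le\delta\int(V_1u^2+V_2v^2)\le\delta\|(u,v)\|_E^2$, one gets $(1-\delta)\|(u,v)\|_E^2\le Q(u,v)\le(1+\delta)\|(u,v)\|_E^2$. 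Using this, assumption \ref{paper4f1}, the critical growth~\eqref{paper4j2} imposed on both $f_i$ and $f_i'(s)s$ (the latter giving the regularity of the Nehari constraint), and the Trudinger--Moser inequality of Theorem~\ref{paper4oz}, I would check that $I\in C^1(E,\mathbb{R})$ and that $I$ has the mountain-pass geometry: $I\ge\rho>0$ on a small sphere of $E$ (the exponential contribution is $o(\|(u,v)\|_E^2)$ there by Theorem~\ref{paper4oz}, since $F_i(s)=o(s^2)$ as $s\to0$), while $I(t\varphi,t\psi)\to-\infty$ as $t\to+\infty$ for any fixed $(\varphi,\psi)\ne0$ by \ref{paper4f3}.

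Next I would show that, for every $(u,v)\ne0$, the fibering map $t\mapsto I(tu,tv)$ has a unique critical point $t(u,v)>0$, which is a strict maximum and satisfies $(t(u,v)u,t(u,v)v)\in\mathcal{N}$; this rests on \ref{paper4f2} and on the superlinearity $f_i(s)/s\to+\infty$ as $|s|\to\infty$ supplied by \ref{paper4f3}, together with $Q>0$. Setting $c=\inf_{\mathcal{N}}I$, it follows from \ref{paper4f3} that $I(u,v)\ge(\tfrac{1}{2}-\tfrac{1}{\mu})Q(u,v)$ on $\mathcal{N}$, with $\mu=\min\{\mu_1,\mu_2\}>2$, and a Trudinger--Moser estimate shows $\mathcal{N}$ is bounded away from the origin, so $c>0$. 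Since $c$ coincides with the mountain-pass level of $I$, the mountain-pass theorem provides a Palais--Smale sequence $(u_n,v_n)$ at level $c$, which is bounded in $E$ by \ref{paper4f3}.

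The crux is compactness. First I would bound the level from above using \ref{paper4f4}: evaluating $I$ along the fibering of a fixed nonzero pair $(\varphi,\psi)\in E$,
\[
c\le\max_{t>0}\Big(\frac{t^2}{2}Q(\varphi,\psi)-\vartheta\,t^{q}\big(\|\varphi\|_{L^q}^q+\|\psi\|_{L^q}^q\big)\Big)=C(\varphi,\psi)\,\vartheta^{-2/(q-2)},
\]
which tends to $0$ as $\vartheta\to+\infty$; hence, choosing $\vartheta$ large, $c$ lies below the threshold dictated by Theorem~\ref{paper4oz} for the exponents $\alpha_0^1,\alpha_0^2$. This is precisely what keeps $(u_n,v_n)$ in the subcritical regime, yielding equi-integrability on bounded sets of $e^{\alpha_0^i u_n^2}-1$ and $e^{\alpha_0^i v_n^2}-1$. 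With $c>0$, the fractional Lions-type Lemma~\ref{paper4lions} then excludes vanishing: if $\sup_{y\in\mathbb{R}}\int_{B_R(y)}(u_n^2+v_n^2)\,\ud x\to0$, then $\int F_i(u_n)\to0$ and $\int f_i(u_n)u_n\to0$ (here the level bound and Theorem~\ref{paper4oz} are used), forcing $Q(u_n,v_n)\to0$, hence $c=0$, a contradiction. Therefore there are $z_n\in\mathbb{Z}$ with $\int_{B_R}\big(|u_n(\cdot-z_n)|^2+|v_n(\cdot-z_n)|^2\big)\,\ud x\ge\beta>0$; by the periodicity \ref{paper4A1} the functional is invariant under integer translations, so $(\tilde u_n,\tilde v_n):=(u_n(\cdot-z_n),v_n(\cdot-z_n))$ is again a Palais--Smale sequence at level $c$, and after passing to a subsequence $\tilde u_n\rightharpoonup u$, $\tilde v_n\rightharpoonup v$ in $E$; the compact embedding $H^{1/2}(\mathbb{R})\hookrightarrow L^2(B_R)$ and the lower bound give $(u,v)\ne0$.

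Finally, equi-integrability allows passage to the limit in $I'(\tilde u_n,\tilde v_n)$, so $(u,v)$ is a nontrivial weak solution of \eqref{paper4j0}; in particular $(u,v)\in\mathcal{N}$, whence $I(u,v)\ge c$. For the reverse inequality I would apply Fatou's lemma to $I-\tfrac{1}{2}I'$, whose integrand $\tfrac{1}{2}f_i(s)s-F_i(s)$ is nonnegative by \ref{paper4f3}, obtaining $I(u,v)=I(u,v)-\tfrac{1}{2}I'(u,v)(u,v)\le\liminf_n\big(I(\tilde u_n,\tilde v_n)-\tfrac{1}{2}I'(\tilde u_n,\tilde v_n)(\tilde u_n,\tilde v_n)\big)=c$; hence $I(u,v)=c$ and $(u,v)$ is a ground state. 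Since $f_i$ is odd, $\lambda\ge0$ and $[\,|u|\,]_{1/2}\le[u]_{1/2}$, one has $I(|u|,|v|)\le I(u,v)$ and $\max_{t>0}I(t|u|,t|v|)\le\max_{t>0}I(tu,tv)=c$, so the Nehari projection of $(|u|,|v|)$ is a nonnegative ground state (the strong maximum principle for $(-\Delta)^{1/2}$ then upgrades each nonzero component to a positive one). I expect the genuine difficulty to be exactly this compactness analysis: one must simultaneously lower $c$ below the Trudinger--Moser threshold by taking $\vartheta$ large, combine Lemma~\ref{paper4lions} with the $\mathbb{Z}$-periodicity to recover a nonzero weak limit, and keep the exponential nonlinearities under control through Theorem~\ref{paper4oz}, both in ruling out loss of mass by vanishing and in passing to the limit in the equations.
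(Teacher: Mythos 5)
Your proposal is essentially correct, but it handles the crucial compactness step by a genuinely different route than the paper. You share the same skeleton: the Nehari/fibering-map analysis (the paper's Lemma~\ref{paper4j23}), the level estimate $c\le C\,\vartheta^{-2/(q-2)}$ coming from \ref{paper4f4} and Lemma~\ref{paper4sq}, the observation that $\vartheta$ large forces the norms of the sequence below the Ozawa threshold of Theorem~\ref{paper4oz} (Lemma~\ref{paper4principal}), the exclusion of vanishing via Lemma~\ref{paper4lions} plus $\mathbb{Z}$-translations, Fatou applied to $f_i(s)s-2F_i(s)\ge 0$ for the energy identity, and the projection of $(|u_0|,|v_0|)$ for nonnegativity. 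The divergence is in how the weak limit is shown to be a ground state: you take a genuine (free) Palais--Smale sequence at the mountain-pass level and pass to the limit directly in $I'(\tilde u_n,\tilde v_n)\to 0$, using the uniform exponential integrability granted by the small-norm bound (a Vitali-type argument), so that $(u,v)$ is a weak solution and membership in $\mathcal{N}$ is automatic; the paper instead works with an Ekeland minimizing sequence on $\mathcal{N}$, for which only the constrained derivative is controlled, and therefore must prove $\langle I'(u_0,v_0),(u_0,v_0)\rangle=0$ through the two-step argument of Proposition~\ref{paper4p2}, built on the Brezis--Lieb splitting of Lemma~\ref{paper4convergence}, Lemma~\ref{paper4j32}, mean-value-theorem estimates (where the critical growth of $f_i'(s)s$ enters), and the monotonicity from \ref{paper4f2}; it then recovers the full equation only at the end via the Lagrange-multiplier Remark~\ref{paper4remark}. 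Your route avoids the paper's most technical section, but it carries two obligations you only assert: the identification of the mountain-pass level with $c_{\mathcal{N}}$ (standard under \ref{paper4f2}, via the unique fibering maximum, but it should be argued), and the limit passage in the nonlinear terms of $I'$ against test functions (a.e.\ convergence plus uniform integrability of $e^{\alpha u_n^2}-1$ from Theorem~\ref{paper4oz} and Lemma~\ref{paper4exp}); both are routine given the small-norm bound, which is exactly the same quantitative mechanism ($\vartheta$ large) the paper exploits. The closing appeal to a strong maximum principle for positivity is an extra, unproven aside not needed for the statement.
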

	
	We are also concerned with the existence of ground states for the following coupled system	 
	\begin{equation}\label{paper4j00}
	\left\{
	\begin{array}{lr}
	(-\Delta)^{1/2}u+\tilde{V_{1}}(x)u=f_{1}(u)+\tilde{\lambda}(x)v, & x\in\mathbb{R},\\
	(-\Delta)^{1/2}v+\tilde{V_{2}}(x)v=f_{2}(v)+\tilde{\lambda}(x)u, & x\in\mathbb{R},
	\end{array}
	\right. \tag{$\tilde{S}$}
	\end{equation} 
	when the potentials $\tilde{V_{1}}(x),$ $\tilde{V_{2}}(x)$ and $\tilde{\lambda}(x)$ are asymptotically periodic. In analogous way, we may define the suitable space $\tilde{E}=\tilde{E_{1}}\times\tilde{E_{2}}$ considering $\tilde{V_{i}}(x)$ instead $V_{i}(x)$. In order to establish an existence theorem for \eqref{paper4j00}, for $i=1,2$ we introduce the following assumptions:
	
	\begin{enumerate}[label=($V_{4}$),ref=$(V_{4})$] 
		\item \label{paper4A4}
		$\tilde{V_{i}}(x)<V_{i}(x)$, $\lambda(x)<\tilde{\lambda}(x)$, for all $x\in \mathbb{R}$ and
		 $$
		  \lim_{|x|\rightarrow+\infty}|V_{i}(x)-\tilde{V_{i}}(x)|=0 \quad \mbox{and} \quad \lim_{|x|\rightarrow+\infty}|\tilde{\lambda}(x)-\lambda(x)|=0.
		 $$
	\end{enumerate}
	
	\begin{enumerate}[label=($V_{5}$),ref=$(V_{5})$] 
		\item \label{paper4A5}
		$\tilde{V_{i}}\in L^{\infty}_{loc}(\mathbb{R})$, $\tilde{V_{i}}(x)\geq0$ for all $x\in\mathbb{R}$ and 
		$$
		\tilde{\nu_{i}}=\inf_{u\in \tilde{E_{i}}}\left\{\frac{1}{2\pi}[u]_{1/2}^{2}+\int_{\mathbb{R}}\tilde{V_{i}}(x)u^{2}\;\ud x: \int_{\mathbb{R}}u^{2}\;\ud x=1\right\}>0.
		$$		 
	\end{enumerate}
	
	\begin{enumerate}[label=($V_{6}$),ref=$(V_{6})$] 
		\item \label{paper4A6}
		$\tilde{\lambda}(x)\leq\delta\sqrt{\tilde{V_{1}}(x)\tilde{V_{2}}(x)}$, for some $\delta\in(0,1)$, for all $x\in\mathbb{R}$.
	\end{enumerate}
	
	\begin{theorem}\label{paper4B}
		Suppose that assumptions \ref{paper4A1}-\ref{paper4A6} hold and for each $i=1,2$ assume that $f_{i}(s)$ has $\alpha_{0}^{i}$-critical growth~\eqref{paper4j2}, satisfies \ref{paper4f1}-\ref{paper4f4} and $f_{i}'(s)s$ has $\alpha_{0}^{i}$-critical growth~\eqref{paper4j2}. Then, System \eqref{paper4j00} possesses a nonnegative ground state solution provided $\vartheta$ in \ref{paper4f4} is large enough.
	\end{theorem}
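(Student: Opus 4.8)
The plan is to derive Theorem~\ref{paper4B} from Theorem~\ref{paper4A} by a comparison-of-levels argument. First I would fix the variational framework for~\eqref{paper4j00}: on $\tilde E=\tilde E_{1}\times\tilde E_{2}$ consider
$$
\tilde\Phi(u,v)=\frac12\|(u,v)\|_{\tilde E}^{2}-\int_{\mathbb R}\tilde\lambda(x)uv\,\ud x-\int_{\mathbb R}\bigl(F_{1}(u)+F_{2}(v)\bigr)\,\ud x,
$$
which, by the growth of $f_{i}$ and of $f_{i}'(s)s$ in~\eqref{paper4j2}, the Ozawa inequality (Theorem~\ref{paper4oz}) and the embeddings $H^{1/2}(\mathbb R)\hookrightarrow L^{q}(\mathbb R)$, $q\in[2,\infty)$, is well defined and of class $C^{1}$ on $\tilde E$; by \ref{paper4A6} its quadratic part is equivalent to $\|\cdot\|_{\tilde E}^{2}$. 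Set $\tilde{\mathcal N}=\{(u,v)\in\tilde E\setminus\{0\}:\tilde\Phi'(u,v)(u,v)=0\}$ and $\tilde c=\inf_{\tilde{\mathcal N}}\tilde\Phi$. Using \ref{paper4f1}--\ref{paper4f2} one checks that for every $(u,v)\neq(0,0)$ the fibering map $t\mapsto\tilde\Phi(tu,tv)$ has a unique positive critical point, which is a strict maximum, so $\tilde{\mathcal N}$ is a natural constraint and $\tilde c>0$; by \ref{paper4f3} any sequence in $\tilde{\mathcal N}$ on which $\tilde\Phi$ is bounded is bounded in $\tilde E$. Exactly as in the proof of Theorem~\ref{paper4A}, the hypothesis \ref{paper4f4} with $\vartheta$ large forces $\tilde c$ to lie strictly below the energy threshold dictated by $\alpha_{0}^{1},\alpha_{0}^{2}$, which is what lets one pass to the limit in the exponential terms along bounded (almost) critical sequences.

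The core step is the strict inequality $\tilde c<c$, where $c=\inf_{\mathcal N}\Phi$ is the periodic ground-state level of Theorem~\ref{paper4A} and $\Phi$ the corresponding functional on $E$. Let $(u_{0},v_{0})$ be the nonnegative ground state of~\eqref{paper4j0} given by Theorem~\ref{paper4A}, and let $t_{0}>0$ be such that $(t_{0}u_{0},t_{0}v_{0})\in\tilde{\mathcal N}$. By \ref{paper4A4} one has $\tilde V_{i}<V_{i}$ and $\tilde\lambda>\lambda$ pointwise, and since $u_{0},v_{0}\ge0$,
$$
\tilde\Phi(t_{0}u_{0},t_{0}v_{0})-\Phi(t_{0}u_{0},t_{0}v_{0})=\frac{t_{0}^{2}}{2}\int_{\mathbb R}(\tilde V_{1}-V_{1})u_{0}^{2}\,\ud x+\frac{t_{0}^{2}}{2}\int_{\mathbb R}(\tilde V_{2}-V_{2})v_{0}^{2}\,\ud x-t_{0}^{2}\int_{\mathbb R}(\tilde\lambda-\lambda)u_{0}v_{0}\,\ud x<0,
$$
the strictness coming from the nontriviality of $(u_{0},v_{0})$. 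Since $t\mapsto\Phi(tu_{0},tv_{0})$ attains its maximum at $t=1$ with value $\Phi(u_{0},v_{0})=c$, this yields $\tilde c\le\tilde\Phi(t_{0}u_{0},t_{0}v_{0})<\Phi(t_{0}u_{0},t_{0}v_{0})\le c$.

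Next I would take a minimizing sequence $(u_{n},v_{n})\subset\tilde{\mathcal N}$ for $\tilde c$ and, via Ekeland's variational principle together with the natural-constraint property, upgrade it to a bounded Palais--Smale sequence for $\tilde\Phi$ at level $\tilde c$; let $(u,v)$ be its weak limit in $\tilde E$. The claim is $(u,v)\neq(0,0)$. If it were zero, then $(u_{n},v_{n})\to0$ in $L^{2}_{loc}(\mathbb R)$, and the decay in \ref{paper4A4} of $|V_{i}-\tilde V_{i}|$ and $|\tilde\lambda-\lambda|$ at infinity gives $\Phi(u_{n},v_{n})-\tilde\Phi(u_{n},v_{n})\to0$ and $\Phi'(u_{n},v_{n})-\tilde\Phi'(u_{n},v_{n})\to0$ in the dual of $\tilde E$; hence $(u_{n},v_{n})$ is a Palais--Smale sequence for the periodic functional $\Phi$ at level $\tilde c<c$. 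The fractional Lions-type lemma (Lemma~\ref{paper4lions}) and the Trudinger--Moser control rule out vanishing (otherwise all nonlinear terms would disappear and $\tilde c=0$), so, using the $\mathbb Z$-periodicity of $V_{i},\lambda$ from \ref{paper4A1}, suitable integer translates $(u_{n}(\cdot-z_{n}),v_{n}(\cdot-z_{n}))$ converge weakly to some nontrivial $(w,z)$ with $\Phi'(w,z)=0$; then $(w,z)\in\mathcal N$ and $c\le\Phi(w,z)\le\liminf\Phi(u_{n},v_{n})=\tilde c$ by Fatou's lemma, a contradiction. Thus $(u,v)\neq(0,0)$; passing to the limit in $\tilde\Phi'(u_{n},v_{n})\to0$ (using local compactness of the embeddings and the uniform exponential integrability from Theorem~\ref{paper4oz}, available because the level is below threshold) shows $\tilde\Phi'(u,v)=0$, so $(u,v)\in\tilde{\mathcal N}$. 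Finally, on $\tilde{\mathcal N}$ one has $\tilde\Phi(u_{n},v_{n})=\int_{\mathbb R}\bigl(\tfrac12 f_{1}(u_{n})u_{n}-F_{1}(u_{n})+\tfrac12 f_{2}(v_{n})v_{n}-F_{2}(v_{n})\bigr)\,\ud x+o(1)$ with nonnegative integrand by \ref{paper4f3}, so Fatou's lemma gives $\tilde\Phi(u,v)\le\liminf\tilde\Phi(u_{n},v_{n})=\tilde c$; together with $(u,v)\in\tilde{\mathcal N}$ this forces $\tilde\Phi(u,v)=\tilde c$, i.e.\ $(u,v)$ is a ground state. Since $f_{i}$ is odd by \ref{paper4f1} and the Gagliardo seminorm satisfies $[\,|w|\,]_{1/2}\le[w]_{1/2}$, replacing $(u,v)$ by $(|u|,|v|)$ produces another ground state whose components are nonnegative, which by the strong form of~\eqref{paper4j00} is the desired nonnegative ground state.

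I expect the main obstacle to be the compactness analysis of the third step: keeping $\tilde c$ strictly below the Trudinger--Moser threshold so that the exponential nonlinearities converge, and carrying out the transfer argument showing that a vanishing weak limit would turn $(u_{n},v_{n})$ into a Palais--Smale sequence for the periodic problem, thereby contradicting the strict gap $\tilde c<c$.
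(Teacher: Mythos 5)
Your skeleton is the paper's: the functional $\tilde I$ and manifold $\tilde{\mathcal N}$, the strict level comparison $c_{\tilde{\mathcal N}}<c_{\mathcal N}$ obtained by projecting the nonnegative periodic ground state onto $\tilde{\mathcal N}$ and using \ref{paper4A4} (this is exactly Lemma~\ref{paper4est}), an Ekeland minimizing sequence, a contradiction argument showing the weak limit is nontrivial, and the Fatou plus $(|u_{0}|,|v_{0}|)$ endgame. The genuine gap lies in how you run the two compactness steps. You assert that the constrained minimizing sequence can be ``upgraded'' to a Palais--Smale sequence for the free functional $\tilde\Phi$ (hence, after the potential comparison, for the periodic functional $\Phi$), i.e.\ that the Lagrange multipliers vanish asymptotically. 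The paper only proves multiplier vanishing for exact constrained critical points (Remark~\ref{paper4remark}); along a sequence this would require a uniform negative upper bound on $\langle J'(u_{n},v_{n}),(u_{n},v_{n})\rangle$ in \eqref{paper4j41}, which does not follow from \ref{paper4f2} alone and is nowhere established. Your whole nontriviality argument (a nontrivial translated weak limit $(w,z)$ with $\Phi'(w,z)=0$ at level $\le\tilde c<c$) and your final step ($\tilde\Phi'(u,v)=0$ ``by passing to the limit in $\tilde\Phi'(u_{n},v_{n})\to0$'') both hinge on this unproved Palais--Smale property, plus a weak-to-weak$^{*}$ continuity of the derivative in the exponentially critical regime that you only sketch.

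The paper deliberately avoids both points. In Proposition~\ref{paper4p3} it never uses $\tilde I'(u_{n},v_{n})\to0$ in the dual: it only needs the scalar relation $\langle I'(u_{n},v_{n}),(u_{n},v_{n})\rangle=o_{n}(1)$, which follows from $(u_{n},v_{n})\in\tilde{\mathcal N}$ together with \ref{paper4A4} and local convergence, and then projects $(u_{n},v_{n})$ onto the periodic Nehari manifold via $t_{n}$ (Lemma~\ref{paper4j23}), proving $\limsup t_{n}\le1$ and $t_{n}\ge1$ by the monotonicity facts \eqref{paper4j12}, \eqref{paper4j31} and Fatou, so that $c_{\mathcal N}\le c_{\tilde{\mathcal N}}+o_{n}(1)$ contradicts Lemma~\ref{paper4est}. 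Likewise, membership of the weak limit in $\tilde{\mathcal N}$ is not obtained by limit passage in the derivative but through the Brezis--Lieb decomposition and fiber-map argument of Proposition~\ref{paper4p2} (which, as the paper notes, uses no periodicity), precisely because the Nehari constraint can be lost in the weak limit. So your route is repairable in principle, but as written the replacement of this machinery by a free Palais--Smale-sequence argument is a real gap, not a detail.
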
 
	
	
	\begin{remark} We collect the following remarks on our assumptions:
	\end{remark}
	
	\begin{enumerate}[label=\textbf{(i)},ref=(i)] 
		\item 
		A typical example of nonlinearity which satisfies the assumptions \ref{paper4f1}-\ref{paper4f4} is
		$$
		f(s)=\vartheta q|s|^{q-2}s+q|s|^{q-2}s(e^{\alpha_{0}s^{2}}-1)+2\alpha_{0}|s|^{q}se^{\alpha_{0}s^{2}}, \quad \mbox{for} \ 2<\mu<q \ \mbox{and} \ s\in\mathbb{R},
		$$
		where $\alpha_{0}$ is the critical exponent introduced in \eqref{paper4j2}.
	\end{enumerate}	
	
	\begin{enumerate}[label=\textbf{(ii)},ref=(ii)]
		\item
		The assumption \ref{paper4f4} could be replaced by the following local condition: there exists $q>2$ and $\tilde{\vartheta}$ such that
		 \begin{equation}\label{local}
		 \liminf_{s\rightarrow0}\frac{F_{i}(s)}{|s|^{q}}\geq\tilde{\vartheta}>0.
		 \end{equation}
		In fact, we can use the critical exponential growth of the nonlinearities, Ambrosetti-Rabinowitz condition \ref{paper4f3} and assumption \eqref{local} to deduce \ref{paper4f4}. In order to ease the presentation of this paper and avoid certain technicalities, we simply assume \ref{paper4f4}.		
	\end{enumerate}
	
	\begin{enumerate}[label=\textbf{(iii)},ref=(iii)] 
		\item 
		Assumption \ref{paper4f4} plays a very important role in the proof of Theorems~\ref{paper4A}~and~\ref{paper4B}. We will prove the existence of ground states when $\vartheta$ is large enough. Precisely, if
		  \begin{equation}\label{size}
		  \vartheta>\vartheta_{0}=\frac{S_{q}^{q}}{q}\left(\frac{1}{1-\delta}\frac{\mu}{\mu-2}\frac{q-2}{q}\frac{\alpha_{0}\kappa^{-1}}{\omega}\right)^{(q-2)/2},
		  \end{equation}
	    where $\alpha_{0}=\max\{\alpha_{0}^{1},\alpha_{0}^{2}\}$, $\mu=\min\{\mu_{1},\mu_{2}\}$, $\omega$ is introduced in Theorem~\ref{paper4oz}, $\kappa^{-1}=\max\{\kappa_{1}^{-1},\kappa_{2}^{-1}\}$ where $\kappa_{i}$ is introduced in Lemma~\ref{paper4emb} and $S_{q}$ is introduced in Section~\ref{paper4s3}.
	    The estimate \eqref{size} will allow us to apply the Trudinger-Moser inequality (see Section~\ref{paper4s1}, Theorem~\ref{paper4oz}) in the minimizing sequence obtained by Ekeland's variational principle (see Lemma~\ref{paper4principal}) in order to prove that the weak limit of this sequence belongs to Nehari manifold.	
	\end{enumerate}
	
	\begin{enumerate}[label=\textbf{(iv)},ref=(iv)] 
		\item 
		Theorems~\ref{paper4A}~and~\ref{paper4B} may be considered as the extension of the main result for the scalar case in \cite{jms}, because we consider a class of potentials and the nonlinear term different from them. If we take $u=v$ and $\lambda=0$ in System~\eqref{paper4j0} then we solve the single equation found in that paper but under our hypotheses.
		
	\end{enumerate}

	\subsection{Notation} We will use the following notation:
	
	\begin{itemize}
		\item $C$, $\tilde{C}$, $C_{1}$, $C_{2}$,... denote positive constants (possibly different).
		\item The norm in $L^{p}(\mathbb{R})$ and $L^{\infty}(\mathbb{R})$, will be denoted respectively by $\|\cdot\|_{L^{p}}$ and $\|\cdot\|_{L^{\infty}}$.
		\item The norm in $L^{p}(\mathbb{R})\times L^{p}(\mathbb{R})$ is given by $\|(u,v)\|_{L^{p}}=\left(\|u\|^{p}_{L^{p}}+\|v\|^{p}_{L^{p}}\right)^{1/p}$.
		\item The norm in $H^{1/2}(\mathbb{R})\times H^{1/2}(\mathbb{R})$ is given by $\|(u,v)\|_{1/2}=\left(\|u\|^{2}_{1/2}+\|v\|^{2}_{1/2}\right)^{1/2}$.
	\end{itemize}		
	
	
	\subsection{Outline} The remainder of this paper is organized as follows. In the Sections \ref{paper4s1}~and~\ref{paper4var}, we collect some results which are crucial to give a variational approach for our problem. In the Section~\ref{paper4subneh}, we introduce and give some properties of the Nehari manifold (for a more complete description of this subject, see for example \cite{neh}). In the Section \ref{paper4s3}, we study the periodic case. For this purpose, we make use of the Ekeland's variational principle to obtain a minimizing sequence for the energy functional on the Nehari manifold. We shall use a fractional version of a lemma introduced by P.L.~Lions, a Brezis-Lieb type lemma and a Trudinger-Moser type inequality to prove that the weak limit of the minimizing sequence will be a ground state solution for the problem. In the periodic case, the key point is to use the invariance of the energy functional under translations to recover the compactness of the minimizing sequence. Finally, in the Section~\ref{paper4t2} we study the asymptotically periodic case. For this matter, the key point is a relation obtained between the ground state energy associated with Systems~\eqref{paper4j0} and \eqref{paper4j00} (see Lemma~\ref{paper4est}).	
	
	
	\section{Preliminary results}\label{paper4s1}
	
	In this Section we provide preliminary results which will be used throughout the paper. One of the features of the class of the systems \eqref{paper4j0} and \eqref{paper4j00} is the presence of the nonlocal operator, square root of the Laplacian. Another feature of these classes of problems is the exponential critical behavior of the nonlinearities in the sense of Trudinger-Moser. We are motivated by the following Trudinger-Moser type inequality which was introduced by T.~Ozawa (see \cite{ozawa}).
	
	\begin{theoremletter}\label{paper4oz}
		There exists $\omega\in(0,\pi)$ such that, for all $\alpha\in(0,\omega]$, there exists $H_{\alpha}>0$ with
		\begin{equation}\label{paper4j5}
		\int_{\mathbb{R}}(e^{\alpha u^{2}}-1)\;\ud x\leq H_{\alpha}\|u\|_{L^{2}}^{2},
		\end{equation}
		for all $u\in H^{1/2}(\mathbb{R})$ such that $\|(-\Delta)^{1/4}u\|_{L^{2}}^{2}\leq 1$.
	\end{theoremletter}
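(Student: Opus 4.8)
The plan is to prove \eqref{paper4j5} by the classical power-series device combined with a Gagliardo--Nirenberg inequality whose constant has a carefully controlled growth in the exponent. First I would expand the integrand pointwise, $e^{\alpha u^{2}}-1=\sum_{k\ge 1}\frac{\alpha^{k}}{k!}|u|^{2k}$, and integrate term by term; as all terms are nonnegative this is legitimate by Tonelli's theorem, and each integral $\|u\|_{L^{2k}(\mathbb R)}^{2k}$ is finite since $H^{1/2}(\mathbb R)\hookrightarrow L^{2k}(\mathbb R)$ for every $k$. Thus it suffices to show that $\sum_{k\ge 1}\frac{\alpha^{k}}{k!}\|u\|_{L^{2k}}^{2k}\lesssim\|u\|_{L^{2}}^{2}$ whenever $\|(-\Delta)^{1/4}u\|_{L^{2}}\le 1$ and $\alpha$ is small.

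The heart of the matter is the estimate
\begin{equation}\label{pf-gn}
\|u\|_{L^{2k}(\mathbb R)}^{2k}\le C_{0}^{\,k}\,k^{\,k}\,\|(-\Delta)^{1/4}u\|_{L^{2}}^{2k-2}\,\|u\|_{L^{2}}^{2},\qquad k\ge 1,\ u\in H^{1/2}(\mathbb R),
\end{equation}
with $C_{0}$ independent of $k$. I would prove it on the Fourier side: writing $A=\|(-\Delta)^{1/4}u\|_{L^{2}}=\||\xi|^{1/2}\widehat u\|_{L^{2}}$ and $B=\|u\|_{L^{2}}=\|\widehat u\|_{L^{2}}$, Hausdorff--Young gives $\|u\|_{L^{p}}\lesssim\|\widehat u\|_{L^{p'}}$ with constant bounded uniformly in $p\ge 2$, and for $R>0$ one splits the $L^{p'}$ norm over $\{|\xi|<R\}$ and $\{|\xi|\ge R\}$. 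H\"older's inequality yields $\|\widehat u\|_{L^{p'}(|\xi|<R)}\le (2R)^{1/2-1/p}B$, while on $\{|\xi|\ge R\}$ one factors $\widehat u=|\xi|^{-1/2}\,|\xi|^{1/2}\widehat u$ and applies H\"older to get $\|\widehat u\|_{L^{p'}(|\xi|\ge R)}\le\big((p-2)R^{-2/(p-2)}\big)^{1/2-1/p}A=(p-2)^{1/2-1/p}R^{-1/p}A$. Balancing the two contributions in $\|u\|_{L^{p}}\lesssim (2R)^{1/2-1/p}B+(p-2)^{1/2-1/p}R^{-1/p}A$ over $R$ (so that $R^{1/2}\sim (p-2)^{1/2-1/p}A/B$) produces $\|u\|_{L^{p}}\lesssim \sqrt{p}\;A^{1-2/p}B^{2/p}$ for all $p\ge 2$; choosing $p=2k$ and raising to the power $2k$ gives \eqref{pf-gn}, the factor $2^{k}$ coming from $(2k)^{k}$ being absorbed into $C_{0}^{k}$.

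It then remains to add up. Under the hypothesis $\|(-\Delta)^{1/4}u\|_{L^{2}}\le 1$ and since $2k-2\ge 0$, \eqref{pf-gn} gives $\|u\|_{L^{2k}}^{2k}\le C_{0}^{k}k^{k}\|u\|_{L^{2}}^{2}$; using $k^{k}\le e^{k}k!$ we obtain
\begin{equation*}
\int_{\mathbb R}(e^{\alpha u^{2}}-1)\,\ud x=\sum_{k\ge 1}\frac{\alpha^{k}}{k!}\|u\|_{L^{2k}}^{2k}\le \|u\|_{L^{2}}^{2}\sum_{k\ge 1}\frac{(C_{0}\alpha)^{k}k^{k}}{k!}\le \|u\|_{L^{2}}^{2}\sum_{k\ge 1}(C_{0}e\,\alpha)^{k}.
\end{equation*}
The geometric series converges as soon as $C_{0}e\,\alpha<1$, so it suffices to take $\omega:=\min\{1/(2C_{0}e),\,1\}\in(0,\pi)$; then for every $\alpha\in(0,\omega]$ the inequality \eqref{paper4j5} holds with $H_{\alpha}:=\frac{C_{0}e\,\alpha}{1-C_{0}e\,\alpha}$. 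The step I expect to be the real obstacle is the quantitative bound \eqref{pf-gn}: everything hinges on the \emph{sharp} power $\sqrt{p}$ in the $L^{p}$-estimate (equivalently, the prefactor $k^{k}$, not $k^{2k}$), because it is precisely this rate which, after division by $k!$, leaves a convergent geometric series---a cruder Sobolev constant would ruin the summation. A variant would be to use the harmonic extension $w$ of $u$ to $\mathbb R^{2}_{+}$, for which $\|\nabla w\|_{L^{2}(\mathbb R^{2}_{+})}^{2}=\|(-\Delta)^{1/4}u\|_{L^{2}(\mathbb R)}^{2}$, and to deduce \eqref{pf-gn} from the two-dimensional Gagliardo--Nirenberg inequality applied to $w$ together with a trace estimate.
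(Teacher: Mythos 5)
Your argument is correct, but note that the paper itself offers no proof of this statement: it is quoted verbatim as Theorem~A from Ozawa's paper \cite{ozawa}, so there is nothing internal to compare against. What you have written is in essence a reconstruction of Ozawa's own argument: the whole content is the Gagliardo--Nirenberg-type inequality $\|u\|_{L^{p}(\mathbb R)}\leq C\sqrt{p}\,\|(-\Delta)^{1/4}u\|_{L^{2}}^{1-2/p}\|u\|_{L^{2}}^{2/p}$ with a constant uniform in $p\geq 2$, which Ozawa also proves on the Fourier side, followed by termwise summation of the exponential series; your Fourier splitting is sound (the exponent bookkeeping checks out: $1/r=1/2-1/p$ on the low frequencies, $\||\xi|^{-1/2}\|_{L^{r}(|\xi|\geq R)}=\left((p-2)R^{-2/(p-2)}\right)^{1/2-1/p}$ on the high ones, and balancing gives the exponent $(1/2-1/p)(1-2/p)\leq 1/2$ on $p-2$, hence the crucial $\sqrt{p}$ growth), and the summation step via $k^{k}\leq e^{k}k!$ and Tonelli is fine, including the trivial cases $u=0$ and $k=1$. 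Two small remarks: the uniformity in $p$ of the Hausdorff--Young constant should be stated for the specific Fourier normalization used (it is bounded for any of the standard conventions, so this is harmless); and your $\omega=\min\{1/(2C_{0}e),1\}$ is of course far from the sharp threshold (known to be related to $\pi$), but the statement only asks for some $\omega\in(0,\pi)$, so this is exactly what is needed. The harmonic-extension variant you sketch at the end would also work and is closer in spirit to how such inequalities are often derived in the Caffarelli--Silvestre framework, but it is not required.
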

	
	The following result is a consequence of Theorem~\ref{paper4oz}, more details can be found in \cite[Lemma~2.2]{jms}.
	
	\begin{lemma}\label{paper4tm}
		Let $u\in H^{1/2}(\mathbb{R})$ and $\rho_{0}>0$ be such that $\|u\|_{1/2}\leq\rho_{0}$. Then, there exists $C=C(\alpha,\rho_{0})>0$ such that
		$$
		\int_{\mathbb{R}}(e^{\alpha u^{2}}-1)\;\ud x\leq C, \quad \mbox{for every} \hspace{0,2cm} 0<\alpha\rho_{0}^{2}<\omega.
		$$ 
	\end{lemma}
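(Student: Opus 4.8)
The plan is to deduce the estimate directly from Theorem~\ref{paper4oz} by a single rescaling that normalizes the Gagliardo seminorm. Fix $u\in H^{1/2}(\mathbb{R})$ with $\|u\|_{1/2}\le\rho_{0}$ and $\alpha>0$ satisfying $0<\alpha\rho_{0}^{2}<\omega$, and set $v:=u/\rho_{0}$. First I would check that $v$ is admissible for Theorem~\ref{paper4oz}: since
$$
\|(-\Delta)^{1/4}u\|_{L^{2}}^{2}=\frac{1}{2\pi}[u]_{1/2}^{2}\le[u]_{1/2}^{2}\le\|u\|_{1/2}^{2}\le\rho_{0}^{2},
$$
we get $\|(-\Delta)^{1/4}v\|_{L^{2}}^{2}=\rho_{0}^{-2}\|(-\Delta)^{1/4}u\|_{L^{2}}^{2}\le1$, so the constraint in Theorem~\ref{paper4oz} is met by $v$.

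Next, I would write $\alpha u^{2}=(\alpha\rho_{0}^{2})v^{2}$ and note that the rescaled exponent $\alpha\rho_{0}^{2}$ lies in $(0,\omega)\subset(0,\omega]$, so Theorem~\ref{paper4oz} applies with $\alpha$ replaced by $\alpha\rho_{0}^{2}$, producing a constant $H_{\alpha\rho_{0}^{2}}>0$ with
$$
\int_{\mathbb{R}}(e^{\alpha u^{2}}-1)\;\ud x=\int_{\mathbb{R}}\bigl(e^{(\alpha\rho_{0}^{2})v^{2}}-1\bigr)\;\ud x\le H_{\alpha\rho_{0}^{2}}\|v\|_{L^{2}}^{2}.
$$
Finally, bounding the right-hand side by $\|v\|_{L^{2}}^{2}=\rho_{0}^{-2}\|u\|_{L^{2}}^{2}\le\rho_{0}^{-2}\|u\|_{1/2}^{2}\le1$ yields $\int_{\mathbb{R}}(e^{\alpha u^{2}}-1)\,\ud x\le H_{\alpha\rho_{0}^{2}}=:C$, a constant depending only on $\alpha$ and $\rho_{0}$ (through the product $\alpha\rho_{0}^{2}$), which is the claimed bound.

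There is essentially no obstacle in this argument; the only two points that need to be observed are that the single dilation $v=u/\rho_{0}$ simultaneously forces $\|(-\Delta)^{1/4}v\|_{L^{2}}\le1$ \emph{and} $\|v\|_{L^{2}}\le1$, and that the hypothesis $0<\alpha\rho_{0}^{2}<\omega$ puts the rescaled exponent strictly inside the admissible range of Theorem~\ref{paper4oz}, so that Ozawa's constant $H_{\alpha\rho_{0}^{2}}$ is indeed available. This is the same scaling device used in \cite[Lemma~2.2]{jms}.
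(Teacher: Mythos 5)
Your argument is correct: the single normalization $v=u/\rho_{0}$ makes $v$ admissible for Theorem~\ref{paper4oz} with exponent $\alpha\rho_{0}^{2}\in(0,\omega)$, and the bound $\|v\|_{L^{2}}\le 1$ then gives the uniform constant $C=H_{\alpha\rho_{0}^{2}}$. This is exactly the scaling deduction from Ozawa's inequality that the paper relies on by citing \cite[Lemma~2.2]{jms}, so no further comment is needed.
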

	
	\begin{lemma}\label{paper4exp}
		Let $\alpha>0$ and $l>1$. Then, for each $r>l$ there exists a positive constant $C=C(r)$ such that
		$$
		(e^{\alpha s^{2}}-1)^{l}\leq C(e^{r\alpha s^{2}}-1), \quad \mbox{for all} \hspace{0,2cm} s\in\mathbb{R}.
		$$ 
	\end{lemma}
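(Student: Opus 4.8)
The plan is to reduce the two–sided estimate to a one–variable calculus fact. Writing $t=\alpha s^{2}\in[0,+\infty)$, the inequality to be proved is equivalent to the following: for each $r>l$ there is $C=C(r)>0$ such that $(e^{t}-1)^{l}\le C\,(e^{rt}-1)$ for all $t\ge 0$; that is, the function
$$
g(t)=\frac{(e^{t}-1)^{l}}{e^{rt}-1},\qquad t\in(0,+\infty),
$$
is bounded on $(0,+\infty)$. I would establish this by inspecting the three natural regimes of $g$.

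Near $t=0^{+}$, from $e^{t}-1\sim t$ and $e^{rt}-1\sim rt$ one gets $g(t)\sim t^{\,l-1}/r\to 0$, the hypothesis $l>1$ being exactly what guarantees this limit is $0$; hence $g$ extends continuously to $[0,+\infty)$ with $g(0)=0$. As $t\to+\infty$, using $(e^{t}-1)^{l}\le e^{lt}$ together with $e^{rt}-1\ge\tfrac{1}{2}e^{rt}$ for $t$ large, we obtain $g(t)\le 2e^{(l-r)t}\to 0$ since $r>l$. On every compact interval $[\delta,M]\subset(0,+\infty)$ the function $g$ is continuous and its denominator does not vanish, so $g$ is bounded there. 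Combining these three facts, $g$ is bounded on all of $[0,+\infty)$ by some constant $C=C(r)$, and substituting $t=\alpha s^{2}$ back yields $(e^{\alpha s^{2}}-1)^{l}\le C\,(e^{r\alpha s^{2}}-1)$ for every $s\in\mathbb{R}$.

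I do not anticipate any genuine obstacle here; the argument is elementary and the only delicate point is the removable singularity of $g$ at the origin, which is handled by the asymptotic computation above. In fact the estimate can be made completely explicit: for $l\ge 1$ and $t\ge 0$ one has $(e^{t}-1)^{l}\le e^{lt}-1$, because the function $u\mapsto(1+u)^{l}-1-u^{l}$ vanishes at $u=0$ and is nondecreasing on $[0,+\infty)$ (its derivative is $l[(1+u)^{l-1}-u^{l-1}]\ge 0$); since moreover $l\le r$ and $t\ge 0$ give $e^{lt}-1\le e^{rt}-1$, the inequality actually holds with $C=1$. I would nevertheless keep the asymptotic argument as the conceptual proof, as it makes transparent the roles played by the two hypotheses $l>1$ and $r>l$.
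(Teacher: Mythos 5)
Your proposal is correct. Note that the paper states this lemma without giving any proof at all (it is treated as a known elementary estimate, of the kind used in the cited scalar-case papers), so there is no argument of the authors to compare against; your write-up simply supplies the omitted details. Of your two arguments, the explicit one is the cleaner: setting $u=e^{t}-1$ and observing that $(1+u)^{l}-1-u^{l}$ vanishes at $u=0$ and has nonnegative derivative $l\left[(1+u)^{l-1}-u^{l-1}\right]$ for $l\ge 1$ gives $(e^{t}-1)^{l}\le e^{lt}-1\le e^{rt}-1$ with $t=\alpha s^{2}$, i.e.\ the inequality holds with $C=1$ and in fact under the weaker hypotheses $l\ge 1$ and $r\ge l$; the compactness-plus-asymptotics argument is also valid and, as you say, makes the role of the hypotheses transparent, but it is not needed once the monotonicity computation is in hand.
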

	
	\begin{remark}\label{paper4r1}
		In light of \cite[Theorem~8.5]{loss}, for any 
		$p\geq2$, there exists $C=C(p)$, such that
		\begin{equation}\label{paper4ce}
		\|u\|_{L^{p}}\leq C\|u\|_{1/2}, \quad \mbox{for all} \hspace{0,2cm} u\in H^{1/2}(\mathbb{R}).
		\end{equation}
	\end{remark}
	
	\begin{lemma}\label{paper4emb}
		Assume that \ref{paper4A2} holds. Then for each $i=1,2$ there exists $\kappa_{i}>0$ such that 
		\begin{equation}\label{paper4jjj1} 
		\kappa_{i}\|u\|_{1/2}^{2}\leq \frac{1}{2\pi}[u]_{1/2}^{2}+\int_{\mathbb{R}}V_{i}(x)u^{2}\;\ud x, \quad \mbox{for all} \hspace{0,2cm} u\in E_{i}.
		\end{equation}
	\end{lemma}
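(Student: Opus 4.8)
The plan is to observe that hypothesis \ref{paper4A2} already packages all the coercivity one needs: the nonnegativity of $V_{i}$ controls the Gagliardo seminorm for free, and the positivity of $\nu_{i}$ controls the $L^{2}$-norm. So the first step is to record the two elementary inequalities valid for every $u\in E_{i}$. Since $V_{i}(x)\ge 0$, one trivially has
\begin{equation*}
\frac{1}{2\pi}[u]_{1/2}^{2}\le \frac{1}{2\pi}[u]_{1/2}^{2}+\int_{\mathbb{R}}V_{i}(x)u^{2}\;\ud x .
\end{equation*}
Next I would rewrite the constrained infimum defining $\nu_{i}$ in \ref{paper4A2} as an unconstrained quadratic-form inequality: applying the characterization to $u/\|u\|_{L^{2}}$ when $u\neq 0$ (the case $u=0$ being trivial) and using that both sides are $2$-homogeneous gives
\begin{equation*}
\nu_{i}\int_{\mathbb{R}}u^{2}\;\ud x\le \frac{1}{2\pi}[u]_{1/2}^{2}+\int_{\mathbb{R}}V_{i}(x)u^{2}\;\ud x .
\end{equation*}

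The second step is simply to take the convex combination of these two estimates with weights $\tfrac12,\tfrac12$: averaging them yields
\begin{equation*}
\frac{1}{2}\Big(\frac{1}{2\pi}[u]_{1/2}^{2}+\nu_{i}\int_{\mathbb{R}}u^{2}\;\ud x\Big)\le \frac{1}{2\pi}[u]_{1/2}^{2}+\int_{\mathbb{R}}V_{i}(x)u^{2}\;\ud x .
\end{equation*}
Setting $\kappa_{i}:=\tfrac12\min\{\tfrac{1}{2\pi},\nu_{i}\}>0$, the left-hand side is bounded below by $\kappa_{i}\big([u]_{1/2}^{2}+\int_{\mathbb{R}}u^{2}\;\ud x\big)=\kappa_{i}\|u\|_{1/2}^{2}$, which is precisely \eqref{paper4jjj1}. (This explicit $\kappa_{i}$ is the one later invoked in the size condition \eqref{size}.)

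There is essentially no real obstacle in this argument. The only point requiring a bit of care is the passage from the normalized infimum in \ref{paper4A2} to the homogeneous inequality above, since $\nu_{i}$ is defined through a constraint $\int_{\mathbb{R}}u^{2}\;\ud x=1$; after that rescaling, the statement is a one-line convex-combination estimate. Conceptually, Lemma~\ref{paper4emb} records that the form norm $\|\cdot\|_{E_{i}}$ dominates a fixed multiple of $\|\cdot\|_{1/2}$, i.e.\ the embedding $E_{i}\hookrightarrow H^{1/2}(\mathbb{R})$ is continuous, which is what makes the variational setting on $E=E_{1}\times E_{2}$ legitimate.
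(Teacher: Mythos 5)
Your proof is correct, and it takes a different (and in fact cleaner) route than the paper. The paper argues by contradiction: it assumes the inequality fails for every constant, takes a normalized sequence $\|u_{n}\|_{1/2}=1$ with $\tfrac{1}{2\pi}[u_{n}]_{1/2}^{2}+\int_{\mathbb{R}}V_{i}(x)u_{n}^{2}\,\ud x<\tfrac{1}{n}$, uses $\nu_{i}>0$ to force $\|u_{n}\|_{L^{2}}\to 0$, hence $[u_{n}]_{1/2}^{2}\to 1$, and derives a contradiction with the vanishing of the quadratic form since $V_{i}\geq 0$. You instead make the two ingredients of \ref{paper4A2} quantitative from the start: the nonnegativity of $V_{i}$ gives $\tfrac{1}{2\pi}[u]_{1/2}^{2}\leq\tfrac{1}{2\pi}[u]_{1/2}^{2}+\int_{\mathbb{R}}V_{i}u^{2}\,\ud x$, the definition of $\nu_{i}$, after the (correctly justified) rescaling $u\mapsto u/\|u\|_{L^{2}}$, gives $\nu_{i}\|u\|_{L^{2}}^{2}\leq\tfrac{1}{2\pi}[u]_{1/2}^{2}+\int_{\mathbb{R}}V_{i}u^{2}\,\ud x$, and averaging yields the claim with the explicit constant $\kappa_{i}=\tfrac12\min\{\tfrac{1}{2\pi},\nu_{i}\}$. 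The trade-off is mild: both arguments rest on exactly the same two facts, but the paper's indirect sequence argument produces no explicit constant, whereas yours is constructive and exhibits $\kappa_{i}$ explicitly, which is a genuine bonus given that $\kappa_{i}$ enters the quantitative threshold \eqref{size}. There is no gap in your argument.
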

	\begin{proof}
		Suppose that \eqref{paper4jjj1} does not holds. Thus, there exists a sequence $(u_{n})_{n}\subset E_{i}$ such that $\|u_{n}\|_{1/2}=1$ and
		$$
		\frac{1}{2\pi}[u_{n}]_{1/2}^{2}+\int_{\mathbb{R}}V_{i}(x)u_{n}^{2}\;\ud x<\frac{1}{n}.
		$$
		By using \ref{paper4A2}, we have that
		$$
		0<\lambda_{i}\leq\frac{1}{\|u_{n}\|_{L^{2}}^{2}}\left(\frac{1}{2\pi}[u_{n}]_{1/2}^{2}+\int_{\mathbb{R}}V_{i}(x)u_{n}^{2}\;\ud x\right)<\frac{1}{n}\frac{1}{\|u_{n}\|_{L^{2}}^{2}},
		$$
		which implies that $\|u_{n}\|_{L^{2}}^{2}\rightarrow0$ and $[u_{n}]_{1/2}^{2}\rightarrow1$. Therefore, since $V_{i}\geq0$, we conclude that
		$$
		o_{n}(1)=-\|u_{n}\|_{L^{2}}^{2}\leq\int_{\mathbb{R}}V_{i}(x)u_{n}^{2}\;\ud x<\frac{1}{n}-\frac{1}{2\pi}[u_{n}]_{1/2}^{2}\rightarrow-\frac{1}{2\pi},
		$$
		which is impossible and finishes the proof.
	\end{proof}
	
	\noindent Notice that combining Remark~\ref{paper4r1} and Lemma~\ref{paper4emb} we have that $E_{i}$ is continuously embedded into $L^{p}(\mathbb{R})$, for any $p\geq2$. As consequence of the assumption \ref{paper4A3} we have the following lemma:
	
	\begin{lemma}\label{paper4nehari-1}
		For any $(u,v)\in E$ we have
		\begin{equation}\label{paper4j27}
		\|(u,v)\|_{E}^{2}-2\int_{\mathbb{R}}\lambda(x)uv\;\ud x\geq(1-\delta)\|(u,v)\|_{E}^{2}.
		\end{equation} 
	\end{lemma}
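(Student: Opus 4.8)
The plan is to estimate the cross term $2\int_{\mathbb R}\lambda(x)uv\,\ud x$ directly by means of the pointwise bound \ref{paper4A3} together with the elementary inequality $2ab\le a^{2}+b^{2}$. The key observation is that $\sqrt{V_{1}(x)V_{2}(x)}\,|u||v|$ factors as $\bigl(\sqrt{V_{1}(x)}\,|u|\bigr)\bigl(\sqrt{V_{2}(x)}\,|v|\bigr)$, so Young's inequality applied to these two factors produces exactly the weighted quantities $V_{1}(x)u^{2}$ and $V_{2}(x)v^{2}$ that make up the norm $\|(u,v)\|_{E}^{2}$.

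Concretely, I would first note that for $(u,v)\in E$ the weights are locally bounded by \ref{paper4A2} and $\int_{\mathbb R}V_{1}(x)u^{2}\,\ud x,\ \int_{\mathbb R}V_{2}(x)v^{2}\,\ud x<\infty$, so by Cauchy--Schwarz the integral $\int_{\mathbb R}\sqrt{V_{1}(x)V_{2}(x)}\,|u||v|\,\ud x$ is finite and all the manipulations below are legitimate. Then, using \ref{paper4A3},
\begin{equation*}
2\int_{\mathbb R}\lambda(x)uv\,\ud x\le 2\delta\int_{\mathbb R}\sqrt{V_{1}(x)V_{2}(x)}\,|u||v|\,\ud x\le \delta\int_{\mathbb R}\bigl(V_{1}(x)u^{2}+V_{2}(x)v^{2}\bigr)\,\ud x.
\end{equation*}
Since $\int_{\mathbb R}V_{1}(x)u^{2}\,\ud x\le\|u\|_{E_{1}}^{2}$ and $\int_{\mathbb R}V_{2}(x)v^{2}\,\ud x\le\|v\|_{E_{2}}^{2}$, the right-hand side is at most $\delta\bigl(\|u\|_{E_{1}}^{2}+\|v\|_{E_{2}}^{2}\bigr)=\delta\|(u,v)\|_{E}^{2}$. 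Subtracting this inequality from $\|(u,v)\|_{E}^{2}$ gives \eqref{paper4j27}.

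This is essentially a one-line computation, so there is no real obstacle; the only point requiring a word of care is the first inequality above, where one uses \ref{paper4A3} to bound $\lambda(x)uv$ from above by $\delta\sqrt{V_{1}(x)V_{2}(x)}\,|u||v|$ pointwise (recalling that $\lambda$ is taken nonnegative, so $\lambda(x)uv\le\lambda(x)|u||v|\le\delta\sqrt{V_{1}(x)V_{2}(x)}\,|u||v|$), together with the finiteness of the integrals, which follows from $(u,v)\in E$. I would also record for later use that the same computation, applied in bilinear form, shows that
\begin{equation*}
((u,v),(w,z))\mapsto ((u,v),(w,z))_{E}-\int_{\mathbb R}\lambda(x)\,(uz+vw)\,\ud x
\end{equation*}
defines an inner product on $E$ whose induced norm is equivalent to $\|\cdot\|_{E}$, which is the form in which Lemma~\ref{paper4nehari-1} will be used when setting up the energy functional and the Nehari manifold.
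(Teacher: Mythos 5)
Your proof is correct and follows essentially the same route as the paper: the paper expands $\bigl(\sqrt{V_{1}(x)}\,|u|-\sqrt{V_{2}(x)}\,|v|\bigr)^{2}\geq 0$, which is exactly your application of $2ab\leq a^{2}+b^{2}$ to $a=\sqrt{V_{1}(x)}\,|u|$, $b=\sqrt{V_{2}(x)}\,|v|$, followed by \ref{paper4A3}. Your extra remarks on the sign of $\lambda$ and on finiteness of the cross term are harmless refinements of the same one-line argument.
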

	\begin{proof}
		Notice that for any $(u,v)\in E$ we have that
		$$
		0\leq\left(\sqrt{V_{1}(x)}|u|-\sqrt{V_{2}(x)}|v|\right)^{2}=V_{1}(x)u^{2}-2\sqrt{V_{1}(x)}|u|\sqrt{V_{2}(x)}|v|+V_{2}(x)v^{2},
		$$
		which together with assumption \ref{paper4A3} implies that
		$$
		-2\int_{\mathbb{R}}\lambda(x)uv\;\ud x\geq-\delta\left(\int_{\mathbb{R}}V_{1}(x)u^{2}\;\ud x+\int_{\mathbb{R}}V_{2}(x)v^{2}\;\ud x\right)\geq-\delta\|(u,v)\|_{E}^{2},
		$$	 
		which implies that \eqref{paper4j27} holds.
	\end{proof}	
	
	The next lemma is a very important tool to overcome the lack of compactness. The \textit{vanishing lemma} was proved originally by P.L.~Lions \cite[Lemma~I.1]{lionss} and here we use the following version to fractional Sobolev spaces.
	
	\begin{lemma}\label{paper4lions}
		Assume that $(u_{n})_{n}$ is a bounded sequence in $H^{1/2}(\mathbb{R})$ satisfying
		\begin{align}\label{paper4lionsa} 
		\lim_{n\rightarrow+\infty}\sup_{y\in\mathbb{R}}\int_{y-R}^{y+R}|u_{n}|^{2}\;\ud x=0,
		\end{align}
		for some $R>0$. Then, $u_{n}\rightarrow0$ strongly in $L^{p}(\mathbb{R})$, for $2<p<\infty$.
	\end{lemma}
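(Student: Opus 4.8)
The plan is to reduce the statement to the classical interpolation-plus-covering argument that works for $H^1$, using the fractional Gagliardo--Nirenberg--Sobolev embedding on bounded intervals. First I would fix $2<p<\infty$ and choose an auxiliary exponent: since $H^{1/2}(\mathbb{R})\hookrightarrow L^{q}(\mathbb{R})$ for every $q\in[2,\infty)$ (Remark~\ref{paper4r1}), pick any $q>p$, so that by interpolation on each ball $B_R(y)=(y-R,y+R)$ one has, for a constant depending only on $p,q$,
\begin{equation*}
\|u_n\|_{L^p(B_R(y))}\le \|u_n\|_{L^2(B_R(y))}^{1-\theta}\,\|u_n\|_{L^q(B_R(y))}^{\theta},\qquad \tfrac1p=\tfrac{1-\theta}{2}+\tfrac{\theta}{q},\ \theta\in(0,1).
\end{equation*}
Next, on the bounded interval $B_R(y)$ the fractional Sobolev space $H^{1/2}(B_R(y))$ embeds continuously into $L^{q}(B_R(y))$ with an embedding constant independent of $y$ (by translation invariance of the Gagliardo seminorm and of Lebesgue measure); hence
\begin{equation*}
\|u_n\|_{L^q(B_R(y))}\le C\,\|u_n\|_{H^{1/2}(B_R(y))}\le C\Bigl([u_n]_{1/2}+\|u_n\|_{L^2(B_R(y))}\Bigr).
\end{equation*}
Combining the last two displays,
\begin{equation*}
\|u_n\|_{L^p(B_R(y))}^p\le C\,\|u_n\|_{L^2(B_R(y))}^{(1-\theta)p}\Bigl([u_n]_{1/2}+\|u_n\|_{L^2(B_R(y))}\Bigr)^{\theta p}.
\end{equation*}

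Then I would sum over a locally finite cover of $\mathbb{R}$ by intervals $B_R(y_j)$ with bounded overlap (say the intervals centered at the points $y_j=2Rj$, $j\in\mathbb{Z}$, thickened slightly so as to cover $\mathbb{R}$ with overlap multiplicity at most $2$). Choosing the exponent split so that the seminorm is raised to a power $\le 2$ — concretely, require $\theta p\le 2$, which is possible because we may take $q$ as large as we wish, forcing $\theta$ as small as we wish — we get, after applying the discrete $\ell^{\,r}$-subadditivity inequality $\sum a_j^r\le(\sum a_j)^r$ for $r\ge1$ and the bounded-overlap property,
\begin{equation*}
\int_{\mathbb{R}}|u_n|^p\,\ud x=\sum_j\|u_n\|_{L^p(B_R(y_j))}^p
\le C\Bigl(\sup_{y}\|u_n\|_{L^2(B_R(y))}\Bigr)^{(1-\theta)p}\sum_j\Bigl([u_n]_{1/2}^{2}+\|u_n\|_{L^2(B_R(y_j))}^2\Bigr).
\end{equation*}
The sum on the right is bounded by $C\bigl([u_n]_{1/2}^2\cdot(\text{const per unit length is not summable})\bigr)$ — this is the point that needs care, so I will instead keep $\theta p$ strictly between $0$ and $2$ and pair the leftover seminorm power with a factor that \emph{is} summable. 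The clean way, which I would actually write, is: choose $q$ with $\theta p=2$ exactly impossible in general, so instead bound $([u_n]_{1/2}+\|u_n\|_{L^2(B_R(y_j))})^{\theta p}\le C([u_n]_{1/2}^{\theta p}+\|u_n\|_{L^2(B_R(y_j))}^{\theta p})$, and since $\theta p<p$ we may interpolate once more so that only $\|u_n\|_{L^2(B_R(y_j))}^2$ appears (using $\theta p> $ nothing forces this, but $\|u_n\|_{L^2(B_R(y_j))}^{\theta p}\le \|u_n\|_{L^2(B_R(y_j))}^{2}\cdot\|u_n\|_{L^2(B_R(y_j))}^{\theta p-2}$ fails when $\theta p<2$).

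The correct and standard remedy, which I will use, is the following: choose the split with $\theta p=2$ by taking $q=\frac{2(p-2)}{\,p-2-\,?\,}$... — more honestly, just take any $q>p$, set $\sigma=(1-\theta)p>0$ and $\tau=\theta p\in(0,p)$, and write
\begin{equation*}
\|u_n\|_{L^p(B_R(y_j))}^p\le C\,\varepsilon_n^{\sigma}\,\Bigl([u_n]_{1/2}^{\tau}+\|u_n\|_{L^2(B_R(y_j))}^{\tau}\Bigr),\qquad \varepsilon_n:=\sup_y\|u_n\|_{L^2(B_R(y))}\to0.
\end{equation*}
For the first term: $\sum_j$ of a constant diverges, so this crude bound must be replaced — instead bound $[u_n]_{1/2}^{\tau}$ on $B_R(y_j)$ by the \emph{local} seminorm $[u_n]_{H^{1/2}(B_{2R}(y_j))}^{\tau}$, observe $\tau<p$ hence by Young $[u_n]^{\tau}\le [u_n]^{2}+C$ only if $\tau\le 2$; so the decisive requirement is $\theta p\le 2$, i.e.\ $q\ge \frac{2(p-2)}{p\cdot\frac{?}{}}$: explicitly $\theta=\frac{1/2-1/p}{1/2-1/q}$, and $\theta p\le 2\iff \frac1q\ge \frac1p-\frac1{2}\bigl(1-\frac2p\bigr)$ which does hold for all large $q$ (the right side can be negative). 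Hence such $q$ exists, $\theta p\le 2$, and then $[u_n]_{H^{1/2}(B_{2R}(y_j))}^{\theta p}\le [u_n]_{H^{1/2}(B_{2R}(y_j))}^{2}+1$ is wrong on unbounded index set because of the $+1$; fix by using instead $ab\le a^{p/\tau}+b^{p/(p-\tau)}$-type Young with the $\varepsilon_n^\sigma$ weight absorbing the divergent count since $\varepsilon_n\to0$: $\varepsilon_n^\sigma\cdot(\#\text{intervals meeting }\supp)$ is not finite either. I will therefore present the argument in its standard textbook form: interpolate to get $\|u_n\|_{L^p(B_R(y))}^{p}\le C\,\varepsilon_n^{\sigma}\,\|u_n\|_{H^{1/2}(B_{2R}(y))}^{2}$ valid after choosing $q$ so that $\theta p=2$ exactly — which is legitimate because $\theta p=2\iff \frac1q=\frac1p-\frac1p+\frac1?$: setting $\theta p=2$ gives $q=\frac{2(p-2)}{p-4}$ for $p>4$ and, for $2<p\le4$, one first raises to a higher power $p'>4$, applies the result there, and concludes by the embedding chain. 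Summing over the bounded-overlap cover then yields
\begin{equation*}
\int_{\mathbb{R}}|u_n|^p\,\ud x\le C\,\varepsilon_n^{\sigma}\sum_j\|u_n\|_{H^{1/2}(B_{2R}(y_j))}^{2}\le C'\,\varepsilon_n^{\sigma}\,\|u_n\|_{1/2}^{2}\le C''\,\varepsilon_n^{\sigma}\longrightarrow 0,
\end{equation*}
using boundedness of $(u_n)_n$ in $H^{1/2}(\mathbb{R})$ and the fact that the local Gagliardo seminorms over a bounded-overlap cover sum to at most a constant times the global one. This proves $u_n\to0$ in $L^p(\mathbb{R})$ for the chosen $p$, and for the remaining $p\in(2,\infty)$ one interpolates between two such values (or uses the bootstrap just indicated).

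The main obstacle, as the discussion above signals, is purely bookkeeping: arranging the interpolation exponent $\theta$ and the auxiliary integrability exponent $q$ so that the power of the \emph{nonlocal} seminorm $[u_n]_{1/2}$ that appears after interpolation is exactly $2$ (not more), so that, when summing the localized estimates over a covering of $\mathbb{R}$ with uniformly bounded overlap, the right-hand side telescopes into a bounded multiple of the global norm $\|u_n\|_{1/2}^{2}$ rather than a divergent series. The nonlocality of $[\,\cdot\,]_{1/2}$ forces one to use the \emph{restricted} Gagliardo seminorm $[u_n]_{H^{1/2}(B_{2R}(y))}$ on slightly enlarged intervals and to invoke the elementary fact $\sum_j [u_n]_{H^{1/2}(B_{2R}(y_j))}^2\le C\,[u_n]_{1/2}^2$ for a bounded-overlap cover, which replaces the pointwise-gradient additivity used in the classical $H^1$ proof. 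Once the exponents are fixed (for $p>4$ directly, for $2<p\le 4$ after raising to a power $>4$ and re-interpolating), the remainder is the standard Lions argument and is routine.
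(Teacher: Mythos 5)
Your overall strategy is the paper's own: localize, interpolate $L^{p}$ between $L^{2}$ and a higher $L^{q}$ on intervals of fixed length, and sum over a locally finite cover using the boundedness of $(u_{n})_{n}$ in $H^{1/2}(\mathbb{R})$ together with \eqref{paper4lionsa}. Your observation that the nonlocal seminorm is handled by the localized quantities, via $\sum_{j}[u_{n}]_{H^{1/2}(B_{j})}^{2}\le M\,[u_{n}]_{1/2}^{2}$ for a cover of multiplicity $M$, is correct and is in fact more explicit than the paper's one-line conclusion of the summation step.

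However, the exponent bookkeeping on which your final assembled argument rests fails at a concrete point. With $\frac1p=\frac{1-\theta}{2}+\frac{\theta}{q}$ one has $\theta=\frac{1/2-1/p}{1/2-1/q}$, so as $q\uparrow\infty$ the exponent $\theta$ \emph{decreases to} $1-\frac2p$, not to $0$; your claim that taking $q$ large forces $\theta$ as small as we wish is false. Consequently $\theta p$ ranges over $(p-2,p)$, and the normalization $\theta p=2$ is achievable (with $q=\frac{4}{4-p}>p$) precisely when $2<p<4$, and never when $p\ge4$; your formula $q=\frac{2(p-2)}{p-4}$ and the claim that the direct argument works for $p>4$ while $2<p\le4$ needs a bootstrap are exactly backwards (for $p=6$ your recipe gives $q=4<p$, and no interpolation between $L^{2}$ and $L^{4}$ controls $L^{6}$). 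The fix is standard: run your localized argument for some $p_{0}\in(2,4)$, where $\theta p_{0}=2$ is available, to get $u_{n}\rightarrow0$ in $L^{p_{0}}(\mathbb{R})$; then for any $p>p_{0}$ choose $r>p$ and interpolate $\|u_{n}\|_{L^{p}}\le\|u_{n}\|_{L^{p_{0}}}^{1-\sigma}\|u_{n}\|_{L^{r}}^{\sigma}$, using $\|u_{n}\|_{L^{r}}\le C\|u_{n}\|_{1/2}\le C$ from Remark~\ref{paper4r1} (note that interpolating ``between two such values'' of the direct range $(2,4)$ would not reach $p\ge4$). With this correction, and after pruning the abandoned attempts, your proof coincides in substance with the paper's, which carries out the same interpolation-plus-covering argument more tersely.
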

	\begin{proof}
		Given $r>p$, $R>0$ and $y\in\mathbb{R}$ it follows by standard interpolation that
		$$
		\|u_{n}\|_{L^{p}(B_{R}(y))}\leq\|u_{n}\|_{L^{2}(B_{r}(y))}^{1-\theta}\|u_{n}\|_{L^{r}(B_{R}(y))}^{\theta},
		$$
		for some $\theta\in(0,1)$ such that
		$$
		\frac{1-\theta}{2}+\frac{\theta}{r}=\frac{1}{q}.
		$$
		Using a locally finite covering of $\mathbb{R}$ consisting of open balls of radius $R$, the continuous embedding $H^{1/2}(\mathbb{R})\hookrightarrow L^{r}(\mathbb{R})$, the fact that $\|u_{n}\|_{1/2}\leq C$ and assumption \eqref{paper4lionsa}, we can conclude that
		$$
		\lim_{n\rightarrow+\infty}\|u_{n}\|_{L^{p}}\leq C\lim_{n\rightarrow+\infty}\sup_{y\in\mathbb{R}}\int_{y-R}^{y+R}|u_{n}|^{2}\;\ud x=0.
		$$
	\end{proof}
	
	
	\section{The Variational Setting}\label{paper4var}
	
	Associated to System~\eqref{paper4j0} we consider the energy functional $I:E\rightarrow\mathbb{R}$ defined by
	$$
	I(u,v)=\frac{1}{2}\left(\|(u,v)\|_{E}^{2}-2\int_{\mathbb{R}}\lambda(x)uv\;\ud x\right)-\int_{\mathbb{R}}\left(F_{1}(u)+F_{2}(v)\right)\;\ud x.
	$$
	Under our assumptions on $f_{i}(s)$, $V_{i}(x)$ and $\lambda(x)$, its standard to check that $I$ is well defined. Moreover, $I\in C^{2}(E,\mathbb{R})$ and its differential is given by
	$$
	\langle I'(u,v),(\phi,\psi)\rangle= ((u,v),(\phi,\psi))-\int_{\mathbb{R}}\left(f_{1}(u)\phi+f_{2}(v)\psi\right)\;\ud x-\int_{\mathbb{R}}\lambda(x)\left(u\psi+v\phi\right)\;\ud x.
	$$
	The critical points of $I$ are precisely solutions (in the weak sense) to \eqref{paper4j0}. We say that a solution $(u_{0},v_{0})\in E$ of \eqref{paper4j0} is a \textit{ground state solution} (or \textit{least energy solution}) if $(u_{0},v_{0})\neq(0,0)$ and its energy is minimal among the energy of all nontrivial solutions, that is, $I(u_{0},v_{0})\leq I(u,v)$ for any nontrivial solution $(u,v)\in E$ of \eqref{paper4j0}.
	
	\begin{remark}
		By using \ref{paper4f1}-\ref{paper4f3}, the following facts can be deduced for $i=1,2$ and $s\in\mathbb{R}\backslash\{0\}$:
		\reqnomode
		\begin{align}
		\label{paper4j12}
		f_{i}'(s)s^{2}-f_{i}(s)s>0,\\		
		\label{paper4j13}
		f_{i}'(s)>0,\\	
		\label{paper4j14}
		\phi_{i}(s)=f_{i}(s)s-2F_{i}(s)>0,\\		
		\label{paper4j31}
		\phi_{i}(s)>\phi_{i}(ts), \ \mbox{for all} \ t\in(0,1).	
		\end{align}
	\end{remark}
	
	\begin{lemma}
		Suppose that \ref{paper4f1} and \ref{paper4f3} hold. If $f_{i}(s)$ and $f_{i}'(s)s$ have $\alpha_{0}^{i}$-critical growth, then for each $i=1,2$, for any $\varepsilon>0$, $\alpha>\alpha_{0}$ and $p>2$, there exists $C=C(\varepsilon,p)>0$ such that
		\reqnomode
		\begin{align}
		\label{paper4j15}
		f_{i}(s)\leq\varepsilon |s|+C(e^{\alpha s^{2}}-1)|s|^{p-1},\\
		\label{paper4j39}
		f'_{i}(s)s\leq\varepsilon |s|+C(e^{\alpha s^{2}}-1)|s|^{p-1},\\
		\label{paper4j28}
		F_{i}(s)\leq\varepsilon s^{2}+C(e^{\alpha s^{2}}-1)|s|^{p}.
		\end{align}
	\end{lemma}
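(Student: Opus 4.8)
The plan is to prove the formally stronger pointwise bounds with $|f_i(s)|$ in place of $f_i(s)$; since $e^{\alpha s^{2}}-1$, $|s|$ and $|s|^{p-1}$ are even and $f_i$ is odd by \ref{paper4f1}, it suffices to argue for $s\ge 0$, after which all three displayed inequalities follow from $f_i(s)\le|f_i(s)|$. Fix $i\in\{1,2\}$, $\varepsilon>0$, $\alpha>\alpha_{0}\ge\alpha_{0}^{i}$ and $p>2$. I would split the half-line $[0,+\infty)$ into three pieces: a neighbourhood $[0,\delta]$ of the origin, a far region $[R,+\infty)$, and the compact middle interval $[\delta,R]$, and bound $|f_i(s)|$ separately on each.

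Near the origin, \ref{paper4f1} gives $\lim_{s\to0}f_i(s)/s=0$, so there is $\delta=\delta(\varepsilon)\in(0,1)$ with $|f_i(s)|\le\varepsilon s$ for $0\le s\le\delta$, which is the required bound on $[0,\delta]$ (with the second term dropped). In the far region, since $\alpha>\alpha_{0}\ge\alpha_{0}^{i}$, the critical-growth condition \eqref{paper4j2} yields $\limsup_{s\to+\infty}|f_i(s)|/(e^{\alpha s^{2}}-1)=0$, hence there is $R=R(\alpha)>\max\{\delta,1\}$ with $|f_i(s)|\le e^{\alpha s^{2}}-1$ for $s\ge R$; using $|s|^{p-1}\ge R^{p-1}\ge 1$ on this range gives $|f_i(s)|\le(e^{\alpha s^{2}}-1)|s|^{p-1}$ there. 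On the compact middle interval, $f_i$ is continuous, so $M:=\max_{\delta\le s\le R}|f_i(s)|<\infty$, while $s\mapsto(e^{\alpha s^{2}}-1)|s|^{p-1}$ is continuous and strictly positive on $[\delta,R]$, hence bounded below by $m:=(e^{\alpha\delta^{2}}-1)\delta^{p-1}>0$, so $|f_i(s)|\le(M/m)(e^{\alpha s^{2}}-1)|s|^{p-1}$ on $[\delta,R]$. Taking $C:=\max\{1,M/m\}$ and combining the three estimates yields \eqref{paper4j15}.

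The estimate \eqref{paper4j39} follows by repeating this argument verbatim with $g_i(s):=f_i'(s)s$ in place of $f_i(s)$: by hypothesis $g_i$ has $\alpha_{0}^{i}$-critical growth \eqref{paper4j2}, and since $f_i\in C^{1}$ with $f_i(0)=0$ and $\lim_{s\to0}f_i(s)/s=0$ one gets $f_i'(0)=0$, so $g_i(s)/s=f_i'(s)\to0$ as $s\to0$, which supplies the near-origin ingredient. Finally, for \eqref{paper4j28} I would invoke \ref{paper4f3}, giving $0<F_i(s)\le\mu_i^{-1}f_i(s)s\le\mu_i^{-1}|s|\,|f_i(s)|$ for $s\ne0$ (and $F_i(0)=0$); substituting \eqref{paper4j15} and using $\mu_i>1$ produces $F_i(s)\le\varepsilon s^{2}+C(e^{\alpha s^{2}}-1)|s|^{p}$ after relabelling constants. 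There is no genuine obstacle here: the only point requiring care is the compact middle interval $[\delta,R]$, where one must note that the comparison function $(e^{\alpha s^{2}}-1)|s|^{p-1}$ is bounded away from zero (this uses $\delta>0$), so that the continuous $f_i$ is dominated by it there; the remainder is bookkeeping of the constant $C$, which depends on $\varepsilon$ through $\delta$, on $p$, and on the fixed $\alpha$.
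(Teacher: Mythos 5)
Your proposal is correct and follows essentially the same route as the paper's proof: the near-origin bound from \ref{paper4f1}, the far-field bound from the critical growth condition \eqref{paper4j2} with $\alpha>\alpha_{0}^{i}$, a continuity/compactness argument on the middle interval, the verbatim repetition for $f_i'(s)s$, and the derivation of \eqref{paper4j28} from \ref{paper4f3} combined with \eqref{paper4j15}. The only difference is that you spell out details the paper leaves implicit (oddness reduction to $s\ge0$, the lower bound on the comparison function on $[\delta,R]$, and $f_i'(0)=0$), which is harmless.
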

	
	\begin{proof}
		Let $\varepsilon>0$ be fixed. By using \ref{paper4f1}, there exists $\delta>0$ such that
		\begin{equation}\label{paper4j16}
		f_{i}(s)s\leq \varepsilon s^{2}, \quad \mbox{for all}  \hspace{0,2cm} |s|<\delta.
		\end{equation} 
		By using \eqref{paper4j2} for $\alpha>\alpha^{i}_{0}$, there exists $R>0$ such that
		\begin{equation}\label{paper4j17}
		f_{i}(s)\leq \varepsilon (e^{\alpha s^{2}}-1)\leq C(\varepsilon,p)(e^{\alpha s^{2}}-1)|s|^{p-1}, \quad \mbox{for all}  \hspace{0,2cm} |s|>R.
		\end{equation}
		By continuity we have
		\begin{equation}\label{paper4j18}
		f_{i}(s)\leq C(\varepsilon,p)(e^{\alpha s^{2}}-1)|s|^{p-1}, \quad \mbox{for all} \hspace{0,2cm} s\in[\delta,R].
		\end{equation} 
		Combining \eqref{paper4j16}, \eqref{paper4j17} and \eqref{paper4j18} we get \eqref{paper4j15}. In analogous way we get \eqref{paper4j39}. The last estimate follows from \ref{paper4f3} and \eqref{paper4j15}.
	\end{proof}
	
	
	\section{The Nehari manifold}\label{paper4subneh}
	
	In order to prove the existence of ground state for System \eqref{paper4j0}, we define the Nehari manifold
	$$
	\mathcal{N}=\left\{(u,v)\in E\backslash\{(0,0)\}:\langle I'(u,v),(u,v)\rangle=0\right\}.
	$$
	Notice that if $(u,v)\in\mathcal{N}$ then
	\begin{equation}\label{paper4j8}
	\|(u,v)\|_{E}^{2}-2\int_{\mathbb{R}}\lambda(x)uv\;\ud x=\int_{\mathbb{R}}f_{1}(u)u\;\ud x+\int_{\mathbb{R}}f_{2}(v)v\;\ud x.
	\end{equation}
	
	\begin{lemma}\label{paper4nehari}
		$\mathcal{N}$ is a $C^{1}$-manifold and there exists $\rho>0$, such that
		\begin{equation}\label{paper4j7}
		\|(u,v)\|_{E}\geq\rho, \quad \mbox{for all} \hspace{0,2cm} (u,v)\in\mathcal{N}.
		\end{equation}
	\end{lemma}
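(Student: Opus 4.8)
The plan is to establish the two assertions separately, both resting on the structural facts collected in the preceding remark and lemma. For the uniform lower bound \eqref{paper4j7}, I would start from the Nehari identity \eqref{paper4j8}. Applying Lemma~\ref{paper4nehari-1} to the left-hand side gives
$$
(1-\delta)\|(u,v)\|_{E}^{2}\leq\int_{\mathbb{R}}f_{1}(u)u\;\ud x+\int_{\mathbb{R}}f_{2}(v)v\;\ud x.
$$
Next I would estimate the right-hand side: fix $\varepsilon>0$ small and $\alpha>\alpha_{0}$, $p>2$, and use \eqref{paper4j15} to write $f_{i}(s)s\leq\varepsilon s^{2}+C(e^{\alpha s^{2}}-1)|s|^{p}$. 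The term $\varepsilon\int(u^{2}+v^{2})$ is absorbed into the left-hand side using the continuous embedding $E_{i}\hookrightarrow L^{2}(\mathbb{R})$ (Lemma~\ref{paper4emb} together with Remark~\ref{paper4r1}), once $\varepsilon$ is chosen smaller than $(1-\delta)\kappa_{i}/2$ or so. For the remaining exponential term, I would apply H\"older's inequality to split $\int(e^{\alpha u^{2}}-1)|u|^{p}$ into $\left(\int(e^{\alpha u^{2}}-1)^{l}\right)^{1/l}\|u\|_{L^{pl'}}^{p}$ with $l$ close to $1$, use Lemma~\ref{paper4exp} to pass to $\int(e^{rl\alpha u^{2}}-1)$, and then invoke Lemma~\ref{paper4tm} to bound this by a constant \emph{as long as} $\|u\|_{1/2}$ is suitably small — which is exactly where the assumption that we are on $\mathcal{N}$ feeds back: if $\|(u,v)\|_{E}\to0$ along a hypothetical sequence in $\mathcal{N}$, then $\|u\|_{1/2},\|v\|_{1/2}\to0$ as well, the Trudinger-Moser bound applies uniformly, and we end up with $(1-\delta)\|(u,v)\|_{E}^{2}\leq o(1)\|(u,v)\|_{E}^{2}+C\|(u,v)\|_{E}^{p}$ with $p>2$, forcing $\|(u,v)\|_{E}\geq\rho$ for some $\rho>0$. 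I would phrase this cleanly as: there is $\rho_{0}>0$ such that any $(u,v)\in\mathcal{N}$ with $\|(u,v)\|_{E}\leq\rho_{0}$ leads to a contradiction, hence \eqref{paper4j7}.

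For the claim that $\mathcal{N}$ is a $C^{1}$-manifold, I would set $J(u,v)=\langle I'(u,v),(u,v)\rangle$, which is $C^{1}$ since $I\in C^{2}(E,\mathbb{R})$, so that $\mathcal{N}=J^{-1}(0)\setminus\{(0,0)\}$, and it suffices to show $J'(u,v)\neq0$ — in fact $\langle J'(u,v),(u,v)\rangle<0$ — for every $(u,v)\in\mathcal{N}$. Computing,
$$
\langle J'(u,v),(u,v)\rangle=2\left(\|(u,v)\|_{E}^{2}-2\int_{\mathbb{R}}\lambda(x)uv\;\ud x\right)-\int_{\mathbb{R}}\left(f_{1}'(u)u^{2}+f_{1}(u)u+f_{2}'(v)v^{2}+f_{2}(v)v\right)\;\ud x,
$$
and substituting the Nehari identity \eqref{paper4j8} for the quadratic part yields
$$
\langle J'(u,v),(u,v)\rangle=\int_{\mathbb{R}}\left(f_{1}(u)u-f_{1}'(u)u^{2}\right)\;\ud x+\int_{\mathbb{R}}\left(f_{2}(v)v-f_{2}'(v)v^{2}\right)\;\ud x.
$$
By \eqref{paper4j12} the integrands are strictly negative wherever $u$ (resp.\ $v$) is nonzero, and since $(u,v)\neq(0,0)$ at least one of $u,v$ is not identically zero, so $\langle J'(u,v),(u,v)\rangle<0$; in particular $J'(u,v)\neq0$. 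By the implicit function theorem (or the regular value theorem) $\mathcal{N}$ is a $C^{1}$-submanifold of $E$ of codimension one.

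The main obstacle I anticipate is the lower-bound part, specifically making the Trudinger-Moser estimate genuinely uniform along a sequence in $\mathcal{N}$: one must be careful that the constant produced by Lemma~\ref{paper4tm} depends only on an \emph{upper} bound $\rho_{0}$ for $\|u\|_{1/2}$ and that the constraint $0<\alpha\rho_{0}^{2}<\omega$ (after the loss of the factor $rl$ from H\"older and Lemma~\ref{paper4exp}) can be met by first shrinking $\rho_{0}$ and only then choosing $r,l$ close enough to $1$ and $\alpha$ close enough to $\alpha_{0}$ — the order of quantifiers matters. Everything else ($E_{i}\hookrightarrow L^{p}$, the $C^{1}$ structure, the sign computation) is routine given the lemmas already in hand.
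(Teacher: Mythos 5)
Your proposal is correct and follows essentially the same route as the paper: the manifold part via $J(u,v)=\langle I'(u,v),(u,v)\rangle$, the Nehari identity and \eqref{paper4j12} giving $\langle J'(u,v),(u,v)\rangle<0$, and the lower bound via Lemma~\ref{paper4nehari-1} combined with \eqref{paper4j15}, H\"older, Lemma~\ref{paper4exp} and Lemma~\ref{paper4tm} on a hypothetical small-norm sequence in $\mathcal{N}$, yielding $(1-\delta-\varepsilon C_{1})\|(u,v)\|_{E}^{2}\leq C\|(u,v)\|_{E}^{p}$ with $p>2$. The quantifier-order caution you raise (fix $\alpha>\alpha_{0}$ and $\rho_{0}$ with $\alpha\rho_{0}^{2}<\omega$ first, then take $r>l>1$ close to $1$) is exactly how the paper arranges the argument.
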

	\begin{proof}
		Let $J:E\backslash\{(0,0)\}\rightarrow\mathbb{R}$ be the $C^{1}$-functional defined by
		$$
		J(u,v)=\langle I'(u,v),(u,v)\rangle=\|(u,v)\|_{E}^{2}-2\int_{\mathbb{R}}\lambda(x)uv\;\ud x-\int_{\mathbb{R}}f_{1}(u)u\;\ud x-\int_{\mathbb{R}}f_{2}(v)v\;\ud x.
		$$
		Notice that $\mathcal{N}=J^{-1}(0)$. If $(u,v)\in\mathcal{N}$, it follows from \eqref{paper4j12} and \eqref{paper4j8} that 
		\begin{equation}\label{paper4j41}
		\langle J'(u,v),(u,v)\rangle =\int_{\mathbb{R}}\left(f_{1}(u)u-f_{1}'(u)u^{2}\right)\;\ud x+\int_{\mathbb{R}}\left(f_{2}(v)v-f_{2}'(v)v^{2}\right)\;\ud x<0.
		\end{equation}
		Therefore, $0$ is a regular value of $J$ which implies that $\mathcal{N}$ is a $C^{1}$-manifold.
		
		To prove the second part, we suppose by contradiction that \eqref{paper4j7} does not hold. Thus, we have a sequence 
		\begin{equation}\label{paper4contr}
		(u_{n},v_{n})_{n}\subset\mathcal{N}, \hspace{0,2cm} \mbox{such that} \hspace{0,2cm} \|(u_{n},v_{n})\|_{E}\rightarrow0 \hspace{0,2cm} \mbox{as} \hspace{0,2cm} n\rightarrow+\infty.
		\end{equation}
		Consider $\alpha>\alpha_{0}$ and $\rho_{0}>0$ such that $\alpha\rho_{0}^{2}<\omega$. As consequence of \eqref{paper4contr}, there exists $n_{0}\in\mathbb{N}$ such that $\kappa^{-1}\|(u_{n},v_{n})\|_{E}^{2}\leq\rho_{1}^{2}<\rho_{0}^{2}$, for $n\geq n_{0}$, where $\kappa^{-1}=\max\{\kappa_{1}^{-1},\kappa_{2}^{-1}\}$. For given $p>2$ and $\varepsilon>0$, it follows from estimate \eqref{paper4j15} that
		\begin{equation}\label{paper4j111}
		\int_{\mathbb{R}}f_{1}(u_{n})u_{n}\;\ud x\leq \varepsilon\|u_{n}\|_{L^{2}}^{2}+C_{2}\int_{\mathbb{R}}(e^{\alpha u_{n}^{2}}-1)|u_{n}|^{p}\;\ud x.
		\end{equation}
		Let $r>l>1$ be sufficiently close to $1$ such that $r\alpha\rho_{0}^{2}<\omega$. Thus, it follows from Lemma~\ref{paper4tm}, Lemma~\ref{paper4exp} and H\"older inequality that
		$$
		\int_{\mathbb{R}}(e^{\alpha u_{n}^{2}}-1)|u_{n}|^{p}\;\ud x\leq \left(\int_{\mathbb{R}}(e^{r\alpha u_{n}^{2}}-1)\;\ud x\right)^{1/l}\|u_{n}\|_{L^{pl'}}^{p}\leq C\|u_{n}\|_{L^{pl'}}^{p},
		$$
		which together with \eqref{paper4j111} and Sobolev embedding implies that
		$$
		\int_{\mathbb{R}}f_{1}(u_{n})u_{n}\;\ud x\leq \varepsilon C_{1}\|u_{n}\|_{E_{1}}^{2}+C_{2}\|u_{n}\|_{E_{1}}^{p}\leq \varepsilon C_{1}\|(u_{n},v_{n})\|_{E}^{2}+C_{2}\|(u_{n},v_{n})\|_{E}^{p}.
		$$
		Analogously, we deduce that
		$$
		\int_{\mathbb{R}}f_{2}(v_{n})v_{n}\;\ud x\leq\varepsilon C_{1}\|(u_{n},v_{n})\|_{E}^{2}+C_{2}\|(u_{n},v_{n})\|_{E}^{p}.
		$$
		Combining theses estimates we get,
		\begin{equation}\label{paper4j20}
		\int_{\mathbb{R}}(f_{1}(u_{n})u_{n}+f_{2}(v_{n})v_{n})\;\ud x\leq\varepsilon C_{1}\|(u_{n},v_{n})\|_{E}^{2}+C_{2}\|(u_{n},v_{n})\|_{E}^{p}.
		\end{equation}
		Since $\varepsilon>0$ is arbitrary and $C_{1}$ does not depend of $\varepsilon$ and $n$, we can choose $\varepsilon$ sufficiently small such that $1-\delta-\varepsilon C_{1}>0$. Thus, combining \eqref{paper4j27}, \eqref{paper4j20} and the fact that $(u_{n},v_{n})_{n}\subset\mathcal{N}$ we get
		$$
		(1-\delta)\|(u_{n},v_{n})\|_{E}^{2} \leq \int_{\mathbb{R}}(f_{1}(u_{n})u_{n}+f_{2}(v_{n})v_{n})\;\ud x \leq\varepsilon C_{1}\|(u_{n},v_{n})\|_{E}^{2}+C_{2}\|(u_{n},v_{n})\|_{E}^{p},
		$$
		which yields
		$$
		0<(1-\delta-\varepsilon C_{1})\|(u_{n},v_{n})\|_{E}^{2}\leq C_{2}\|(u_{n},v_{n})\|_{E}^{p}.
		$$
		Hence, denoting  $\rho_{2}=(1-\delta-\varepsilon C_{1})/C_{2}$ we obtain
		$$
		0<\rho_{2}^{1/(p-2)}\leq\|(u_{n},v_{n})\|_{E}.
		$$
		Choosing $\rho_{1}<\rho=\min\{\rho_{0},\rho_{2}^{1/(p-2)}\}$ we get a contradiction and we conclude that \eqref{paper4j7} holds.
	\end{proof}
	
	\begin{remark}\label{paper4remark}
		If $(u_{0},v_{0})\in\mathcal{N}$ is a critical point of $I\mid_{\mathcal{N}}$, then $I'(u_{0},v_{0})=0$. In fact, recall the notation $J(u_{0},v_{0})=\langle I'(u_{0},v_{0}),(u_{0},v_{0})\rangle$ and notice that 
		$
		I'(u_{0},v_{0})=\eta J'(u_{0},v_{0}),
		$
		where $\eta\in\mathbb{R}$ is the corresponding Lagrange multiplier. Taking the scalar product with $(u_{0},v_{0})$ and using \eqref{paper4j41} we conclude that $\eta=0$.
	\end{remark}
	
	Let us define the ground state energy associated with System~\eqref{paper4j0}, that is, $c_{\mathcal{N}}=\inf_{\mathcal{N}} I(u,v)$.
	We claim that $c_{\mathcal{N}}$ is positive. In fact, if $(u,v)\in\mathcal{N}$ it follows from \ref{paper4f3} that
	\begin{eqnarray*}
		I(u,v)	& \geq & \frac{1}{2}\left(\|(u,v)\|_{E}^{2}-2\int_{\mathbb{R}}\lambda(x)uv\;\ud x\right)-\frac{1}{\mu_{1}}\int_{\mathbb{R}}f_{1}(u)u\;\ud x-\frac{1}{\mu_{2}}\int_{\mathbb{R}}f_{2}(v)v\;\ud x\\
		& \geq & \left(\frac{1}{2}-\frac{1}{\mu}\right)\left(\|(u,v)\|_{E}^{2}-2\int_{\mathbb{R}}\lambda(x)uv\;\ud x\right),
	\end{eqnarray*}
	which together with \eqref{paper4j27} implies that
	$$   I(u,v)\geq\left(\frac{1}{2}-\frac{1}{\mu}\right)(1-\delta)\|(u,v)\|_{E}^{2}\geq \left(\frac{1}{2}-\frac{1}{\mu}\right)(1-\delta)\rho>0.
	$$
	
	
	\begin{lemma}\label{paper4j23}
		Suppose that \ref{paper4A3} and \ref{paper4f1}-\ref{paper4f4} hold. For any $(u,v)\in E\backslash\{(0,0)\}$, there exists a unique $t_{0}>0$, depending only of $(u,v)$, such that
		$$
		(t_{0}u,t_{0}v)\in\mathcal{N} \quad \mbox{and} \quad  I(t_{0}u,t_{0}v)=\max_{t\geq0} I(tu,tv).
		$$
		Moreover, if $\langle I'(u,v),(u,v)\rangle<0$, then $t_{0}\in(0,1)$.
	\end{lemma}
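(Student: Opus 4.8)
Fix $(u,v)\in E\setminus\{(0,0)\}$ and consider the real function $g(t)=I(tu,tv)$ for $t\ge 0$. The plan is to analyze $g$ directly: show that $g(t)>0$ for $t$ small, $g(t)\to-\infty$ as $t\to\infty$, so that $g$ attains a positive maximum at some $t_0>0$; then identify the critical points of $g$ with the points where $(tu,tv)\in\mathcal N$, and finally use monotonicity to show this critical point is unique. Observe first that
\[
g(t)=\frac{t^2}{2}\Big(\|(u,v)\|_E^2-2\int_{\mathbb R}\lambda(x)uv\,\ud x\Big)-\int_{\mathbb R}\big(F_1(tu)+F_2(tv)\big)\,\ud x,
\]
and $g'(t)=\tfrac1t\langle I'(tu,tv),(tu,tv)\rangle$, so $g'(t_0)=0$ with $t_0>0$ is exactly the statement $(t_0u,t_0v)\in\mathcal N$.

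\textbf{Behaviour at the endpoints.} For $t$ small I would use Lemma~\ref{paper4nehari-1} (inequality \eqref{paper4j27}) to bound the quadratic part below by $(1-\delta)t^2\|(u,v)\|_E^2/2$, and estimate $\int F_i(tu)\,\ud x$ from above using \eqref{paper4j28}: choosing $\alpha>\alpha_0$ and $\rho_0$ with $\alpha\rho_0^2<\omega$, and $t$ small enough that $\kappa^{-1}t^2\|(u,v)\|_E^2<\rho_0^2$, Lemma~\ref{paper4tm}, Lemma~\ref{paper4exp} and Hölder give $\int_{\mathbb R}(e^{\alpha t^2u^2}-1)|tu|^p\,\ud x\le C t^p\|u\|_{L^{pl'}}^p$; together with the Sobolev embedding (Remark~\ref{paper4r1}, Lemma~\ref{paper4emb}) this yields $g(t)\ge \tfrac{(1-\delta)}{2}t^2\|(u,v)\|_E^2-\varepsilon C_1 t^2\|(u,v)\|_E^2-C_2 t^p\|(u,v)\|_E^p$, which is $>0$ for $t>0$ small after fixing $\varepsilon$ small. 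For $t\to\infty$ I use \ref{paper4f4}: $\int F_i(tu)\,\ud x\ge \vartheta t^q(\|u\|_{L^q}^q+\|v\|_{L^q}^q)$ with $q>2$, so $g(t)\le \tfrac{t^2}{2}\|(u,v)\|_E^2-\vartheta t^q C\to-\infty$. Hence $\max_{t\ge0}g=g(t_0)$ for some $t_0>0$, and $(t_0u,t_0v)\in\mathcal N$.

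\textbf{Uniqueness.} This is the delicate point and is where assumption \ref{paper4f2} enters. Write the Nehari equation $g'(t)=0$, after dividing by $t^2>0$, as
\[
\|(u,v)\|_E^2-2\int_{\mathbb R}\lambda(x)uv\,\ud x=\int_{\mathbb R}\frac{f_1(tu)}{tu}u^2\,\ud x+\int_{\mathbb R}\frac{f_2(tv)}{tv}v^2\,\ud x
\]
(interpreting the integrand as $0$ where $u$ or $v$ vanishes). The left side is a constant; by \ref{paper4f2} the map $t\mapsto s^{-1}f_i(s)\big|_{s=tu}$ is strictly increasing in $t$ on the set $\{u\neq 0\}$ (and likewise for $v$), so the right side is a strictly increasing function of $t>0$. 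Therefore $g'(t)=0$ has at most one solution $t_0>0$; combined with the existence just shown, $t_0$ is unique and depends only on $(u,v)$. Moreover, since the right side is increasing, $g'(t)>0$ for $t<t_0$ and $g'(t)<0$ for $t>t_0$, confirming $t_0$ is the global maximum.

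\textbf{The last claim.} Suppose $\langle I'(u,v),(u,v)\rangle<0$, i.e. $g'(1)<0$. By the sign analysis of $g'$ just established, $g'(t)<0$ forces $t>t_0$; hence $t_0<1$, and $t_0>0$ always, so $t_0\in(0,1)$. The main obstacle in the whole argument is the small-$t$ positivity estimate, which requires carefully threading the Trudinger–Moser machinery (Lemmas~\ref{paper4tm}, \ref{paper4exp}) so that $\alpha\rho_0^2$ and $r\alpha\rho_0^2$ stay below $\omega$; once that is in place the monotonicity argument from \ref{paper4f2} makes uniqueness and the location of $t_0$ routine.
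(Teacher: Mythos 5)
Your proposal is correct and follows essentially the same strategy as the paper: study $g(t)=I(tu,tv)$, prove $g>0$ for small $t$ via the Trudinger--Moser estimates and $g\to-\infty$ for large $t$, and get uniqueness of the critical point from the strict monotonicity in $t$ of the Nehari quotient $\int f_1(tu)u/t+\int f_2(tv)v/t$. The only (equally valid) cosmetic differences are that you use \ref{paper4f4} for the behaviour at infinity where the paper integrates \ref{paper4f3}, you invoke \ref{paper4f2} directly instead of the derived inequality \eqref{paper4j12}, and you deduce $t_0\in(0,1)$ from the sign of $g'$ rather than by the paper's contradiction argument.
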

	\begin{proof}
		Let $(u,v)\in E\backslash\{(0,0)\}$ be fixed and consider the function $g:[0,\infty)\rightarrow\mathbb{R}$ defined by $g(t)= I(tu,tv)$. Notice that
		$$
		\langle I'(tu,tv),(tu,tv)\rangle=tg'(t).
		$$
		The result follows if we find a positive critical point of $g$. After integrating \ref{paper4f3}, we deduce that
		$$
		F_{i}(s)\geq C_{0}(|s|^{\mu_{i}}-1), \quad \mbox{for all} \hspace{0,2cm} s\neq0,
		$$
		which jointly with Lemma~\ref{paper4nehari-1} implies that
		$$
		g(t)\leq \frac{t^{2}}{2}\left(\|(u,v)\|_{E}^{2}-2\int_{\mathbb{R}}\lambda(x)uv\;\ud x\right)-C_{0}\int_{-R}^{R}(t^{\mu_{1}}|u|^{\mu_{1}}+t^{\mu_{2}}|v|^{\mu_{2}})\;\ud x-\tilde{C}.
		$$
		Since $\mu_{1}, \ \mu_{2}>2$, we obtain $g(t)<0$ for $t>0$ large. On the other hand, for some $\alpha>\alpha_{0}$ and $\rho_{0}>0$ satisfying $\alpha\rho_{0}^{2}<\omega$, we consider $t>0$ sufficiently small such that $t\kappa^{-1}\|(u,v)\|_{E}^{2}<\rho_{0}^{2}$. Thus, for $\varepsilon>0$ and $p>2$, we can use \eqref{paper4j28} and the same ideas used to obtain \eqref{paper4j20} to get  
		\begin{equation}\label{paper4j36}
		\int_{\mathbb{R}}(F_{1}(tu)+F_{2}(tv))\;\ud x\leq\varepsilon C_{1}\frac{t^{2}}{2}\|(u,v)\|_{E}^{2}+C_{2}t^{p}\|(u,v)\|_{E}^{p}.
		\end{equation}
		Since $C_{1}$ does not depends of $\varepsilon$ which is arbitrary, we can take it small enough such that $1-\delta-C_{1}\varepsilon>0$. Hence, by using \eqref{paper4j27} and \eqref{paper4j36} we have
		$$
		g(t) \geq t^{2}\|(u,v)\|_{E}^{2}\left(\frac{1-\delta-C_{1}}{2}-C_{2}t^{p-2}\|(u,v)\|_{E}^{p-2}\right).
		$$
		Thus, $g(t)>0$ provided $t>0$ is sufficiently small. Therefore, $g$ has maximum points in $(0,\infty)$. In order to prove the uniqueness, we note that every critical point of $g$ satisfies
		\begin{equation}\label{paper4j29}
		\|(u,v)\|_{E}^{2}-2\int_{\mathbb{R}}\lambda(x)uv\;\ud x=\int_{\mathbb{R}}\frac{f_{1}(tu)u}{t}\;\ud x+\int_{\mathbb{R}}\frac{f_{2}(tv)v}{t}\;\ud x.
		\end{equation}
		Furthermore, by using \eqref{paper4j12} we get
		\begin{equation}\label{paper4jj23} 
		\frac{d}{dt}\left(\frac{f_{i}(ts)s}{t}\right)=\frac{f_{i}'(ts)ts^{2}-f_{i}(ts)s}{t^{2}}=\frac{f_{i}'(ts)t^{2}s^{2}-f_{i}(ts)ts}{t^{3}}>0,
		\end{equation} 
		which implies that the right-hand side of \eqref{paper4j29} is strictly increasing on $t>0$, and consequently, the critical point $t_{0}\in(0,+\infty)$ is \textit{unique}. Finally, we assume that $\langle I'(u,v),(u,v)\rangle<0$ and we suppose by contradiction that $t_{0}\geq1$. Since $t_{0}$ is a critical point of $g$, we have
		$$
		0=g'(t_{0})=\|(u,v)\|_{E}^{2}-2\int_{\mathbb{R}}\lambda(x)uv\;\ud x-\int_{\mathbb{R}}\frac{f_{1}(t_{0}u)u}{t_{0}}\;\ud x+\int_{\mathbb{R}}\frac{f_{2}(t_{0}v)v}{t_{0}}\;\ud x.
		$$
		Therefore, by using the monotonicity obtained above, we conclude that
		$$
		0\leq\|(u,v)\|_{E}^{2}-2\int_{\mathbb{R}}\lambda(x)uv\;\ud x-\int_{\mathbb{R}}f_{1}(u)u\;\ud x+\int_{\mathbb{R}}f_{2}(v)v\;\ud x=\langle I'(u,v),(u,v)\rangle<0,
		$$
		which is a contradiction and the lemma is proved.
	\end{proof}
	
	
	\section{Proof of Theorem \ref{paper4A}}\label{paper4s3}
	
	For $q>2$ considered in \ref{paper4f4}, we define the constant
	$$
	S_{q}=\inf_{(u,v)\in E\backslash\{(0,0)\}}S_{q}(u,v),
	$$
	where
	$$
	S_{q}(u,v)=\frac{\displaystyle\left(\|(u,v)\|_{E}^{2}-2\int_{\mathbb{R}}\lambda(x)uv\;\ud x\right)^{1/2}}{\|(u,v)\|_{L^{q}}}, \quad \mbox{for} \hspace{0,2cm} (u,v)\in E\backslash\{(0,0)\}.
	$$
	
	\begin{lemma}\label{paper4sq}
		Let $\vartheta$ and $q$ be the constants introduced in \ref{paper4f4}. 
		\begin{enumerate}[label=(a),ref=(a)] 
			\item \label{paper4aa}
			The constant $S_{q}$ is positive.
		\end{enumerate}
		\begin{enumerate}[label=(b),ref=(b)] 
			\item \label{paper4bb}
			For any $(u,v)\in E\backslash\{(0,0)\}$, we have
			$$
			\max_{t\geq0}\left(\frac{t^{2}}{2}S_{q}(u,v)^{2}\|(u,v)\|_{L^{q}}^{2}-\vartheta t^{q}\|(u,v)\|_{L^{q}}^{q}\right)=\left(\frac{1}{2}-\frac{1}{q}\right)\frac{S_{q}(u,v)^{2q/(q-2)}}{(q\vartheta)^{2/(q-2)}}.
			$$
		\end{enumerate}	
	\end{lemma}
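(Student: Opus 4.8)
The plan is to prove the two items separately; both reduce to facts already established in the excerpt together with elementary one-variable calculus, so no deep difficulty is expected.

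For item \ref{paper4aa}, I would bound the quotient $S_{q}(u,v)$ from below uniformly in $(u,v)\in E\backslash\{(0,0)\}$. By Lemma~\ref{paper4nehari-1} the numerator satisfies $\|(u,v)\|_{E}^{2}-2\int_{\mathbb{R}}\lambda(x)uv\,\ud x\geq(1-\delta)\|(u,v)\|_{E}^{2}$. For the denominator, combining Remark~\ref{paper4r1} with Lemma~\ref{paper4emb} yields the continuous embeddings $E_{i}\hookrightarrow L^{q}(\mathbb{R})$, i.e. $\|u\|_{L^{q}}\leq C_{i}\|u\|_{E_{i}}$; since $q\geq2$ the $\ell^{q}$-norm on $\mathbb{R}^{2}$ is dominated by the $\ell^{2}$-norm, so $\|u\|_{L^{q}}^{q}+\|v\|_{L^{q}}^{q}\leq C^{q}\bigl(\|u\|_{E_{1}}^{2}+\|v\|_{E_{2}}^{2}\bigr)^{q/2}=C^{q}\|(u,v)\|_{E}^{q}$, hence $\|(u,v)\|_{L^{q}}\leq C\|(u,v)\|_{E}$ with $C>0$ independent of $(u,v)$. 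Dividing gives $S_{q}(u,v)\geq(1-\delta)^{1/2}/C>0$, whence $S_{q}>0$.

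For item \ref{paper4bb}, I fix $(u,v)\in E\backslash\{(0,0)\}$ and set $A=\frac{1}{2}S_{q}(u,v)^{2}\|(u,v)\|_{L^{q}}^{2}$ and $B=\vartheta\|(u,v)\|_{L^{q}}^{q}$, both strictly positive by item \ref{paper4aa} and $(u,v)\neq(0,0)$. I then study $h(t)=At^{2}-Bt^{q}$ on $[0,\infty)$. Since $q>2$, one has $h(0)=0$, $h(t)=t^{2}(A-Bt^{q-2})>0$ for $t>0$ small, and $h(t)\to-\infty$ as $t\to+\infty$, so the maximum is attained at an interior stationary point. From $h'(t)=t\bigl(2A-qBt^{q-2}\bigr)$ the unique positive critical point is $t_{*}=(2A/(qB))^{1/(q-2)}$, and
\[
\max_{t\geq0}h(t)=h(t_{*})=t_{*}^{2}\bigl(A-Bt_{*}^{q-2}\bigr)=\Bigl(1-\tfrac{2}{q}\Bigr)A\Bigl(\tfrac{2A}{qB}\Bigr)^{2/(q-2)}.
\]
Substituting the values of $A$ and $B$, the factor $\|(u,v)\|_{L^{q}}$ cancels, and using $2+\frac{4}{q-2}=\frac{2q}{q-2}$ together with $\frac{1}{2}\bigl(1-\frac{2}{q}\bigr)=\frac{1}{2}-\frac{1}{q}$ produces exactly the claimed value $\bigl(\frac{1}{2}-\frac{1}{q}\bigr)S_{q}(u,v)^{2q/(q-2)}/(q\vartheta)^{2/(q-2)}$.

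Neither step is a genuine obstacle: (a) is a repackaging of Lemmas~\ref{paper4nehari-1} and~\ref{paper4emb} with Remark~\ref{paper4r1}, and (b) is routine optimization. The only points deserving a line of care are the passage from the componentwise $L^{q}$ bounds to the single estimate $\|(u,v)\|_{L^{q}}\le C\|(u,v)\|_{E}$, and the verification that the stationary point $t_{*}$ is the global maximizer on $[0,\infty)$ (which follows from the sign of $h$ near $0$ and its behaviour at infinity).
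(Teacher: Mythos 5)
Your proof is correct and follows essentially the same route as the paper: part (a) by combining Lemma~\ref{paper4nehari-1} with the embedding of $E$ into $L^{q}(\mathbb{R})\times L^{q}(\mathbb{R})$ coming from Remark~\ref{paper4r1} and Lemma~\ref{paper4emb}, and part (b) by the same elementary one-variable optimization of $h$, whose unique positive critical point coincides with the paper's $\overline{t}$. Your extra care about passing from componentwise $L^{q}$ bounds to $\|(u,v)\|_{L^{q}}\leq C\|(u,v)\|_{E}$ and about $t_{*}$ being the global maximizer only makes explicit what the paper leaves implicit.
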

	\begin{proof}
		It follows from \eqref{paper4ce} and \eqref{paper4j27} that 
		$$
		\|(u,v)\|_{E}^{2}-\int_{\mathbb{R}}\lambda(x)uv\;\ud x\geq(1-\delta)\|(u,v)\|_{E}^{2}\geq(1-\delta)C^{-1}\|(u,v)\|_{L^{q}}^{2},
		$$
		for all $(u,v)\in E\backslash\{(0,0)\}$. Therefore, $S_{q}\geq \tilde{C}>0$.
		
		Concerning \ref{paper4bb}, for any $(u,v)\in E\backslash\{(0,0)\}$ we consider $h:[0,+\infty)\rightarrow\mathbb{R}$ defined by
		$$
		h(t)=\frac{t^{2}}{2}S_{q}(u,v)^{2}\|(u,v)\|_{L^{q}}^{2}-\vartheta t^{q}\|(u,v)\|_{L^{q}}^{q}.
		$$
		Since
		$$
		h'(t)=tS_{q}(u,v)^{2}\|(u,v)\|_{L^{q}}^{2}-q\vartheta t^{q-1}\|(u,v)\|_{L^{q}}^{q},
		$$
		it is easy to see that $h'(t)\geq0$ if and only if
		$$
		t\leq\left(\frac{S_{q}(u,v)^{2}}{q\vartheta\|(u,v)\|_{L^{q}}^{q-2}}\right)^{1/(q-2)}=\overline{t}.
		$$
		Therefore, $\overline{t}$ is a maximum point for $h$ and
		$$
		\max_{t\geq0}h(t)=h(\overline{t})=\left(\frac{1}{2}-\frac{1}{q}\right)\frac{S_{q}(u,v)^{2q/(q-2)}}{(q\vartheta)^{2/(q-2)}},
		$$
		which finishes the proof.
	\end{proof}
	
	By Ekeland's variational principle (see \cite{ekeland}), there exists a sequence $(u_{n},v_{n})_{n}\subset\mathcal{N}$ such that
	\begin{equation}\label{paper4j21}
	I(u_{n},v_{n})\rightarrow c_{\mathcal{N}} \quad \mbox{and} \quad  I'\mid_{\mathcal{N}}(u_{n},v_{n})\rightarrow0.
	\end{equation}
	Now we summarize some properties of $(u_{n},v_{n})_{n}$ which are useful to study our problem.
	\begin{lemma}\label{paper4principal}
		The minimizing sequence $(u_{n},v_{n})_{n}$ satisfies the following properties:
		\begin{enumerate}[label=(a),ref=(a)] 
			\item \label{paper4a}
			$(u_{n},v_{n})_{n}$ is bounded in $E$.
		\end{enumerate}
		\begin{enumerate}[label=(b),ref=(b)] 
			\item \label{paper4c}
			$
			\displaystyle\limsup_{n\rightarrow+\infty}\|(u_{n},v_{n})\|_{E}^{2}\leq\frac{1}{1-\delta}\frac{\mu}{\mu-2}\frac{q-2}{q}\frac{S_{q}^{2q/(q-2)}}{(q\vartheta)^{2/(q-2)}}.
			$
		\end{enumerate}
		\begin{enumerate}[label=(c),ref=(c)] 
			\item \label{paper4d}
			$(u_{n},v_{n})_{n}$ does not converge strongly to zero in $L^{m}(\mathbb{R})\times L^{m}(\mathbb{R})$, for some $m>2$.
		\end{enumerate}
		\begin{enumerate}[label=(d),ref=(d)] 
			\item \label{paper4e}
			There exists a sequence $(y_{n})_{n}\subset\mathbb{R}$ and constants $\beta,R>0$ such that
			\begin{equation}\label{paper4af1}
			\liminf_{n\rightarrow+\infty}\int_{y_{n}-R}^{y_{n}+R}(u_{n}^{2}+v_{n}^{2})\;\ud x\geq\beta>0.
			\end{equation}
		\end{enumerate}
	\end{lemma}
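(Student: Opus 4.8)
The plan is to establish the four items in the stated order, using only that $(u_n,v_n)_n$ lies on $\mathcal N$ (so the Nehari identity \eqref{paper4j8} holds for every $n$), that $I(u_n,v_n)\to c_{\mathcal N}$, and the estimates of Sections~\ref{paper4s1}--\ref{paper4s3}. Throughout set $\mu=\min\{\mu_1,\mu_2\}$, $\alpha_0=\max\{\alpha_0^1,\alpha_0^2\}$ and $\kappa^{-1}=\max\{\kappa_1^{-1},\kappa_2^{-1}\}$.

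For \ref{paper4a} and \ref{paper4c} I would start from $\langle I'(u_n,v_n),(u_n,v_n)\rangle=0$ and write $I(u_n,v_n)=I(u_n,v_n)-\frac1\mu\langle I'(u_n,v_n),(u_n,v_n)\rangle$. Using $f_i(s)s\ge\mu_iF_i(s)\ge\mu F_i(s)$ from \ref{paper4f3} (which also gives $F_i>0$) together with Lemma~\ref{paper4nehari-1},
$$I(u_n,v_n)\ \ge\ \Big(\tfrac12-\tfrac1\mu\Big)\Big(\|(u_n,v_n)\|_E^2-2\!\int_{\mathbb R}\lambda(x)u_nv_n\,\ud x\Big)\ \ge\ \Big(\tfrac12-\tfrac1\mu\Big)(1-\delta)\|(u_n,v_n)\|_E^2 .$$
Since $I(u_n,v_n)\to c_{\mathcal N}$, this gives boundedness in $E$ (item \ref{paper4a}) and, passing to the $\limsup$, $\big(\tfrac12-\tfrac1\mu\big)(1-\delta)\limsup_n\|(u_n,v_n)\|_E^2\le c_{\mathcal N}$. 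To complete \ref{paper4c} I would bound $c_{\mathcal N}$ from above: for $(u,v)\in E\setminus\{(0,0)\}$, assumption \ref{paper4f4} gives $I(tu,tv)\le\frac{t^2}{2}S_q(u,v)^2\|(u,v)\|_{L^q}^2-\vartheta t^q\|(u,v)\|_{L^q}^q$, and using the optimal $t_0$ from Lemma~\ref{paper4j23} (so that $(t_0u,t_0v)\in\mathcal N$ realizes $\max_{t\ge0}I(tu,tv)$), Lemma~\ref{paper4sq}\ref{paper4bb}, and the infimum over $(u,v)$ yields $c_{\mathcal N}\le\frac{q-2}{2q}S_q^{2q/(q-2)}/(q\vartheta)^{2/(q-2)}$. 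Combining the two bounds and simplifying $\tfrac{2\mu}{\mu-2}\cdot\tfrac{q-2}{2q}$ produces exactly the inequality in \ref{paper4c}.

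For \ref{paper4d} I would in fact prove the stronger fact that $\liminf_n(\|u_n\|_{L^m}^p+\|v_n\|_{L^m}^p)>0$ for a suitable exponent $m=pl'>2$. This is where the size restriction \eqref{size} enters: $\vartheta>\vartheta_0$ is chosen precisely so that $\alpha_0\kappa^{-1}$ times the right-hand side of \ref{paper4c} stays below $\omega$. Hence one can fix $\alpha>\alpha_0$, then $r>l>1$ close to $1$ and $\rho_0>0$ slightly above $\kappa^{-1}$ times that bound, so that still $r\alpha\rho_0^2<\omega$ while $\|u_n\|_{1/2}^2\le\kappa^{-1}\|(u_n,v_n)\|_E^2<\rho_0^2$ for large $n$; then Lemma~\ref{paper4tm} bounds $\int_{\mathbb R}(e^{r\alpha u_n^2}-1)\,\ud x$ uniformly and Lemma~\ref{paper4exp} with H\"older gives $\int_{\mathbb R}(e^{\alpha u_n^2}-1)|u_n|^p\,\ud x\le C\|u_n\|_{L^m}^p$, likewise for $v_n$. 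Plugging \eqref{paper4j15} into \eqref{paper4j8} and using Lemmas~\ref{paper4nehari-1} and~\ref{paper4nehari},
$$(1-\delta)\rho^2\ \le\ (1-\delta)\|(u_n,v_n)\|_E^2\ \le\ \varepsilon\big(\|u_n\|_{L^2}^2+\|v_n\|_{L^2}^2\big)+C(\varepsilon)\big(\|u_n\|_{L^m}^p+\|v_n\|_{L^m}^p\big) ,$$
and, the $L^2$-norms being bounded by \ref{paper4a}, fixing $\varepsilon$ small enough forces $\|u_n\|_{L^m}^p+\|v_n\|_{L^m}^p$ to be bounded below by a positive constant for all large $n$, which is \ref{paper4d}.

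Finally, \ref{paper4e} follows by contraposition: if it failed, then for each $R>0$ there would be a subsequence along which $\sup_{y\in\mathbb R}\int_{y-R}^{y+R}(u_n^2+v_n^2)\,\ud x\to0$, and Lemma~\ref{paper4lions} applied to $u_n$ and $v_n$ separately along that subsequence (legitimate by the boundedness in \ref{paper4a}) would give $\|u_n\|_{L^m},\|v_n\|_{L^m}\to0$, contradicting \ref{paper4d}; hence \eqref{paper4af1} holds, with $(y_n)_n$ chosen to almost realize the supremum. \textbf{The main obstacle} is item \ref{paper4d}: one has to coordinate the choices of $\varepsilon,\alpha,r,l,\rho_0$ and, crucially, first feed in \ref{paper4c} together with the sharp size condition \eqref{size} so that $\|(u_n,v_n)\|_E^2$ remains (eventually) strictly below the Moser--Trudinger threshold, which is exactly what allows Lemma~\ref{paper4tm} to be applied to the minimizing sequence itself.
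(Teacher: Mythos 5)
Your proposal is correct and follows essentially the same route as the paper: (a)--(b) via the Ambrosetti--Rabinowitz condition, Lemma~\ref{paper4nehari-1}, and the upper bound on $c_{\mathcal N}$ from \ref{paper4f4}, Lemma~\ref{paper4j23} and Lemma~\ref{paper4sq}; (c) via the Moser--Trudinger machinery (Lemmas~\ref{paper4tm}, \ref{paper4exp}, estimate \eqref{paper4j15}) fed into the Nehari identity together with the lower bound $\rho$ from Lemma~\ref{paper4nehari}; and (d) by contradiction through Lions' Lemma~\ref{paper4lions}. The only deviations (keeping the $\varepsilon L^{2}$-term on the right and handled by boundedness instead of absorbing it into the $E$-norm, and the explicit subsequence in (d)) are cosmetic.
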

	\begin{proof}
		It follows from assumption \eqref{paper4j21} that
		$$
		c_{\mathcal{N}} +   o_{n}(1)=I(u_{n},v_{n})=\frac{1}{2}\left(\|(u_{n},v_{n})\|_{E}^{2}-2\int_{\mathbb{R}}\lambda(x)u_{n}v_{n}\;\ud x\right)-\int_{\mathbb{R}}(F_{1}(u_{n})+F_{2}(v_{n}))\;\ud x.
		$$
		Thus, by using \ref{paper4f3}, \eqref{paper4j27} and the fact that $(u_{n},v_{n})_{n}\subset\mathcal{N}$, we deduce that
		$$
		c_{\mathcal{N}} + o_{n}(1)\geq\left(\frac{1}{2}-\frac{1}{\mu}\right)(1-\delta)\|(u_{n},v_{n})\|_{E}^{2}.
		$$
		Therefore, $(u_{n},v_{n})_{n}$ is bounded in $E$. Moreover, the preceding estimate also implies that
		\begin{equation}\label{paper4j34}
		\limsup_{n\rightarrow\infty}\|(u_{n},v_{n})\|_{E}^{2}\leq\frac{1}{1-\delta}\frac{2\mu}{\mu-2}c_{\mathcal{N}}.
		\end{equation}
		
		To prove item~(b), we have from \ref{paper4f4} that
		\begin{equation}\label{paper4jjj2}
		 F_{1}(s)+F_{2}(t)\geq\vartheta(|s|^{q}+|t|^{q}), \quad \mbox{for all} \hspace{0,2cm} s,t\in\mathbb{R}.
		\end{equation}
		By using Lemma \ref{paper4j23}, for any $(\psi,\phi)\in E\backslash\{(0,0)\}$ there exists a unique $t_{0}>0$ such that $(t_{0}\psi,t_{0}\phi)\in\mathcal{N}$. Thus, since that $c_{\mathcal{N}}\leq I(t_{0}\psi,t_{0}\phi)\leq \max_{t\geq0} I(t\psi,t\phi)$, we can use \eqref{paper4jjj2} to~get
		$$
		c_{\mathcal{N}} \leq \max_{t\geq0}\left\{\frac{t^{2}}{2}\left(\|(\psi,\phi)\|^{2}-2\int_{\mathbb{R}}\lambda(x)\psi\phi\;\ud x\right)-\vartheta t^{q}\|(\psi,\phi)\|_{L^{q}}^{q}\right\}.
		$$
		Recalling the definition of $S_{q}(\psi,\phi)$ and using Lemma~\ref{paper4sq}~\ref{paper4bb}, we conclude that
		\begin{equation}\label{paper4j35}
		c_{\mathcal{N}}\leq\max_{t\geq0}\left\{\frac{t^{2}}{2}S_{q}(\psi,\phi)^{2}\|(\psi,\phi)\|_{L^{q}}^{2}-\vartheta t^{q}\|(\psi,\phi)\|_{L^{q}}^{q}\right\}=\left(\frac{1}{2}-\frac{1}{q}\right)\frac{S_{q}(\psi,\phi)^{2q/(q-2)}}{(q\vartheta)^{2/(q-2)}}.
		\end{equation}
		Combining \eqref{paper4j34}, \eqref{paper4j35} and taking the infimum over $(\psi,\phi)\in E\backslash\{(0,0)\}$ we have that
		$$
		\limsup_{n\rightarrow\infty}\|(u_{n},v_{n})\|_{E}^{2}\leq\frac{1}{1-\delta}\frac{\mu}{\mu-2}\frac{q-2}{q}\frac{S_{q}^{2q/(q-2)}}{(q\vartheta)^{2/(q-2)}}.
		$$


		Concerning \ref{paper4d}, let $\alpha,\rho_{0}>0$ be such that $\alpha>\alpha_{0}$ and $0<\alpha\rho_{0}^{2}<\omega$. By using item~\ref{paper4c}, there exists $\vartheta_{0}>0$ such that
		$$
		\kappa^{-1}\limsup_{n\rightarrow+\infty}\|(u_{n},v_{n})\|_{E}^{2}\leq\rho_{0}^{2}, \quad \mbox{for} \hspace{0,2cm} \vartheta>\vartheta_{0}.
		$$
		By similar arguments used in the proof of Lemma~\ref{paper4nehari}, for given $p>2$, $r>l>1$, sufficiently close to $1$, such that $r\alpha\rho_{0}^{2}<\omega$ and a suitable $\varepsilon>0$, we can deduce that
		$$
		0<(1-\delta-\varepsilon C_{1})\rho^{2}\leq (1-\delta-\varepsilon C_{1})\|(u_{n},v_{n})\|_{E}^{2}\leq C_{2}\|(u_{n},v_{n})\|_{L^{pl'}(\mathbb{R})}^{p},
		$$
		where $1/l+1/l'=1$. Therefore, $(u_{n},v_{n})_{n}$ cannot converge to zero in $L^{pl'}(\mathbb{R})$. 
		
		Finally to prove item~\ref{paper4e} we suppose by contradiction that \eqref{paper4af1} does not holds. Thus, for any $R>0$, we have
		$$
		\lim_{n\rightarrow+\infty}\sup_{y\in\mathbb{R}}\int_{y-R}^{y+R}(u_{n}^{2}+v_{n}^{2})\;\ud x=0.
		$$
		By using Lemma~\ref{paper4lions}, it follows that $(u_{n},v_{n})\rightarrow0$ strongly in $L^{p}(\mathbb{R})\times L^{p}(\mathbb{R}^{2})$ for any $p>2$. In particular, for $pl'>2$ contradicting item~\ref{paper4d}.
	\end{proof}
	
	\begin{proposition}\label{paper4p1}
		There exists a minimizing sequence which converges to a nontrivial weak limit.
	\end{proposition}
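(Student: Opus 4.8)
\emph{Proof proposal.} The plan is to use the $\mathbb{Z}$-periodicity of the potentials to re-center the minimizing sequence produced by Ekeland's principle so that the mass detected in Lemma~\ref{paper4principal}~\ref{paper4e} sits in a fixed compact interval, and then to extract a weak limit which, by the local (Rellich-type) compactness of the fractional Sobolev embedding, must be nonzero.

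First I would take the sequence $(u_{n},v_{n})_{n}\subset\mathcal{N}$ with $I(u_{n},v_{n})\to c_{\mathcal{N}}$ and $I'|_{\mathcal{N}}(u_{n},v_{n})\to0$ from \eqref{paper4j21}, together with the sequence $(y_{n})_{n}\subset\mathbb{R}$ and the constants $\beta,R>0$ furnished by Lemma~\ref{paper4principal}~\ref{paper4e}. Choose $k_{n}\in\mathbb{Z}$ with $|y_{n}-k_{n}|\le1$ and set $\tilde{u}_{n}(x)=u_{n}(x+k_{n})$, $\tilde{v}_{n}(x)=v_{n}(x+k_{n})$. By assumption~\ref{paper4A1}, the quantities $\|(\cdot,\cdot)\|_{E}$, $\int_{\mathbb{R}}\lambda(x)(\cdot)(\cdot)\,\ud x$, $\int_{\mathbb{R}}(F_{1}+F_{2})\,\ud x$ and the constraint defining $\mathcal{N}$ are all invariant under the shift $x\mapsto x+k_{n}$; hence $(\tilde{u}_{n},\tilde{v}_{n})_{n}\subset\mathcal{N}$ is again a minimizing sequence with $I(\tilde{u}_{n},\tilde{v}_{n})\to c_{\mathcal{N}}$ and $I'|_{\mathcal{N}}(\tilde{u}_{n},\tilde{v}_{n})\to0$, and it is bounded in $E$ by Lemma~\ref{paper4principal}~\ref{paper4a}. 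A change of variables in \eqref{paper4af1}, together with $[y_{n}-k_{n}-R,\,y_{n}-k_{n}+R]\subset[-R-1,\,R+1]=:I_{0}$, gives
$$
\liminf_{n\to+\infty}\int_{-R-1}^{R+1}\bigl(\tilde{u}_{n}^{2}+\tilde{v}_{n}^{2}\bigr)\;\ud x\ge\beta>0.
$$

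Next, since $(\tilde{u}_{n},\tilde{v}_{n})_{n}$ is bounded in the Hilbert space $E=E_{1}\times E_{2}$, I would pass to a subsequence so that $\tilde{u}_{n}\rightharpoonup u_{0}$ in $E_{1}$ and $\tilde{v}_{n}\rightharpoonup v_{0}$ in $E_{2}$. By Lemma~\ref{paper4emb} this entails weak convergence in $H^{1/2}(\mathbb{R})$, and the compact embedding $H^{1/2}(I_{0})\hookrightarrow L^{2}(I_{0})$ on the bounded interval $I_{0}$ (see \cite{guia}) yields $\tilde{u}_{n}\to u_{0}$ and $\tilde{v}_{n}\to v_{0}$ strongly in $L^{2}(I_{0})$. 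Passing to the limit in the displayed inequality gives $\int_{I_{0}}(u_{0}^{2}+v_{0}^{2})\,\ud x\ge\beta>0$, so $(u_{0},v_{0})\neq(0,0)$, which is the assertion.

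The only genuinely delicate point is the re-centering step: it is precisely the periodicity hypothesis~\ref{paper4A1} that makes $I$ and $\mathcal{N}$ invariant under the integer translations $x\mapsto x+k_{n}$, so that the translated sequence remains admissible and still minimizing while the non-vanishing mass from Lemma~\ref{paper4principal}~\ref{paper4e} is pulled back into a fixed compact set. (In the asymptotically periodic setting of Theorem~\ref{paper4B} this exact invariance is lost, which is why that case must be handled separately in Section~\ref{paper4t2} through the energy comparison of Lemma~\ref{paper4est}.) Everything else—boundedness, weak compactness of bounded sequences in $E$, and the local compactness of the fractional embedding—is standard.
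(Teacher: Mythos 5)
Your proposal is correct and follows essentially the same route as the paper: shift the Ekeland minimizing sequence using the non-vanishing points $y_{n}$ from Lemma~\ref{paper4principal}~\ref{paper4e}, invoke the periodicity \ref{paper4A1} to keep the translated sequence in $\mathcal{N}$ at the same energy level, and conclude nontriviality of the weak limit from the mass retained on a fixed bounded interval. In fact your write-up is slightly more careful than the paper's at two points it leaves implicit—re-centering by nearby integers $k_{n}$ (since the invariance only holds for $\mathbb{Z}$-translations, at the harmless cost of enlarging the interval to $[-R-1,R+1]$) and the explicit use of the local compact embedding $H^{1/2}(I_{0})\hookrightarrow L^{2}(I_{0})$ to pass the lower bound to the limit.
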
 
	
	\begin{proof}
		Let $(u_{n},v_{n})_{n}\subset \mathcal{N}$ be the minimizing sequence satisfying \eqref{paper4j21}. By the Lemma~\ref{paper4principal}~\ref{paper4a}, $(u_{n},v_{n})_{n}$ is bounded in $E$. Thus, passing to a subsequence, we may assume that $(u_{n},v_{n})\rightharpoonup(u_{0},v_{0})$ weakly in $E$. Let us define the shift sequence $(\tilde{u}_{n}(x),\tilde{v}_{n}(x))=(u_{n}(x+y_{n}),v_{n}(x+y_{n}))$. Notice that the sequence $(\tilde{u}_{n},\tilde{v}_{n})_{n}$ is also bounded in $E$ which implies that, up to a subsequence, $(\tilde{u}_{n},\tilde{v}_{n})\rightharpoonup(\tilde{u},\tilde{v})$ weakly in $E$. By using assumption \ref{paper4A1}, we can note that the energy functional is invariant by translations of the form $(u,v)\mapsto(u(\cdot-z),v(\cdot-z))$, with $z\in\mathbb{Z}$. Thus, $I(\tilde{u}_{n},\tilde{v}_{n})=I(u_{n},v_{n})$ and $(\tilde{u}_{n},\tilde{v}_{n})_{n}$ is also a minimizing sequence for $I$ on $\mathcal{N}$. Therefore,
		$$
		\lim_{n\rightarrow+\infty}\int_{-R}^{R}(\tilde{u}_{n}^{2}+\tilde{v}_{n}^{2})\;\ud x=\lim_{n\rightarrow+\infty}\int_{y_{n}-R}^{y_{n}+R}(u_{n}^{2}+v_{n}^{2})\;\ud x\geq\beta>0,
		$$
		which implies $(\tilde{u},\tilde{v})\neq(0,0)$. 
	\end{proof}
	
	\noindent For the sake of simplicity, we will keep the notation $(u_{n},v_{n})_{n}$ and $(u_{0},v_{0})$. In order to prove that $(u_{0},v_{0})\in\mathcal{N}$, we will use the following Brezis-Lieb type lemma due to J.M.~do~\'O et al. \cite[Lemma~2.6]{jms}.
	
	\begin{lemma}\label{paper4convergence}
		Let $(u_{n})_{n}\subset H^{1/2}(\mathbb{R})$ be a sequence such that $u_{n}\rightharpoonup u$ weakly in $H^{1/2}(\mathbb{R})$ and $\|u_{n}\|_{1/2}<\rho_{0}$ with $\rho_{0}>0$ small. Then, as $n\rightarrow\infty$, we have
		$$
		\int_{\mathbb{R}}f(u_{n})u_{n}\;\ud x=\int_{\mathbb{R}}f(u_{n}-u)(u_{n}-u)\;\ud x+\int_{\mathbb{R}}f(u)u\;\ud x+o_{n}(1),
		$$
		\vspace{-0,2cm}
		$$
		\int_{\mathbb{R}}F(u_{n})\;\ud x=\int_{\mathbb{R}}F(u_{n}-u)\;\ud x+\int_{\mathbb{R}}F(u)\;\ud x+o_{n}(1).
		$$
	\end{lemma}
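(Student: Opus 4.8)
The strategy is the classical Brezis–Lieb splitting argument, but carried out with the exponential critical nonlinearity controlled by the Ozawa–Trudinger–Moser inequality of Theorem~\ref{paper4oz}. Write $w_n=u_n-u$, so that $w_n\rightharpoonup 0$ weakly in $H^{1/2}(\mathbb R)$ and, up to a subsequence, $w_n\to 0$ a.e.\ and $w_n\to 0$ strongly in $L^p_{\mathrm{loc}}(\mathbb R)$ for every $p\in[2,\infty)$. I would first establish the pointwise algebraic identity
$$
f(u_n)u_n - f(w_n)w_n - f(u)u = o_n(1) \quad \text{a.e.},
$$
and more precisely bound the "cross terms" $f(u_n)u_n - f(w_n)w_n$ by a sum of products of the form $(|w_n|+|u|)\cdot$(something exponential), using the mean value theorem together with \eqref{paper4j39} applied to $f'(s)s$ (this is exactly why the hypothesis that $f_i'(s)s$ has critical growth is imposed). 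The point is that each such term, integrated over $\mathbb R$, splits into an integral over a large ball $B_R$ — which goes to $0$ by local strong convergence and the a.e.\ convergence plus a dominated/Vitali argument — and an integral over $\mathbb R\setminus B_R$, which is made small uniformly in $n$ because $u\in L^p$ decays there while the exponential factors stay bounded in $L^1$.

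The uniform $L^1$-control of the exponential factors is the crux and where Theorem~\ref{paper4oz} (via Lemma~\ref{paper4tm} and Lemma~\ref{paper4exp}) enters. Since $\|u_n\|_{1/2}<\rho_0$ with $\rho_0$ small, we also have $\|w_n\|_{1/2}$ bounded by a small constant, so for a fixed $\alpha>\alpha_0$ with $\alpha\rho_0^2<\omega$ we may choose $r>1$ close to $1$ with $r\alpha\rho_0^2<\omega$; then Lemma~\ref{paper4tm} gives $\int_{\mathbb R}(e^{r\alpha u_n^2}-1)\,\ud x\le C$ and $\int_{\mathbb R}(e^{r\alpha w_n^2}-1)\,\ud x\le C$ uniformly in $n$. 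Combined with Lemma~\ref{paper4exp} and Hölder's inequality (splitting the exponent $l,l'$ so that $rl'$ stays subcritical), every term of the form $\int_{\mathbb R}(e^{\alpha u_n^2}-1)|w_n|^{p-1}|u|\,\ud x$ is bounded by $C\|w_n\|_{L^{pl'}}\|u\|_{L^{pl'}}$ on $\mathbb R\setminus B_R$, hence small for $R$ large, and on $B_R$ it tends to $0$ because $w_n\to0$ strongly in $L^{pl'}(B_R)$. Summing the finitely many such terms gives the first identity. The second identity, for the primitive $F$, follows by the same splitting applied to \eqref{paper4j28} (the estimate $F_i(s)\le\varepsilon s^2+C(e^{\alpha s^2}-1)|s|^p$), together with the elementary inequality $|F(a+b)-F(a)-F(b)|\le |f(a+b)-f(b)|\,|a| + |f(b)|\,|a|$ or a direct mean value estimate; no new ingredient is needed.

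The main obstacle is not conceptual but the bookkeeping of the exponential terms: one must arrange the Hölder exponents so that the Trudinger–Moser bound applies (i.e.\ the exponent multiplying $\alpha$ in the exponential stays below $\omega$ after raising to the power $l$ and after using $\|u_n\|_{1/2},\|w_n\|_{1/2}<\rho_0$), and one must handle the a.e.\ convergence on bounded sets via a generalized dominated convergence (Vitali) theorem since the natural dominating function involves $e^{\alpha u_n^2}$, which is only bounded in $L^1$ rather than pointwise dominated — uniform integrability of the sequence $(e^{\alpha u_n^2}-1)|w_n|^{p-1}|u|$ on $B_R$ is what rescues this. Since this lemma is quoted verbatim from \cite[Lemma~2.6]{jms} in the scalar setting and the proof there already carries out precisely this argument, I would either cite it directly or reproduce the two-region splitting as sketched above.
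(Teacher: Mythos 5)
The paper gives no proof of this lemma at all: it is invoked directly as \cite[Lemma~2.6]{jms}, which is precisely one of the two options you propose, and your sketch (Brezis--Lieb splitting with the cross terms controlled via the mean value theorem, the growth estimates \eqref{paper4j15}--\eqref{paper4j39}, and uniform $L^{1}$-bounds on the exponential factors from Lemma~\ref{paper4tm} and Lemma~\ref{paper4exp}) is the standard argument carried out in that reference. So your proposal is correct and consistent with the paper's treatment; no gap to report.
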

	\noindent As consequence of Lemma \ref{paper4convergence}, we have the following lemma:
	
	\begin{lemma}\label{paper4j32}
		If $w_{n}=u_{n}-u_{0}$ and $z_{n}=v_{n}-v_{0}$, then
		\begin{equation}\label{paper4j22}
		\langle I'(u_{0},v_{0}),(u_{0},v_{0})\rangle+\liminf_{n\rightarrow+\infty}\langle I'(w_{n},z_{n}),(w_{n},z_{n})\rangle=0.
		\end{equation}
		Therefore, either $\langle I'(u_{0},v_{0}),(u_{0},v_{0})\rangle\leq0$ or $\liminf_{n\rightarrow+\infty}\langle I'(w_{n},z_{n}),(w_{n},z_{n})\rangle<0$.
	\end{lemma}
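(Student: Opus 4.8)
The plan is to test the Nehari identity for $(u_n,v_n)$ against $(u_n,v_n)$ itself and then decompose each term according to the weak convergence $(u_n,v_n)\rightharpoonup(u_0,v_0)$ in $E$ --- equivalently $u_n\rightharpoonup u_0$ in $E_1$ and $v_n\rightharpoonup v_0$ in $E_2$, so that $w_n\rightharpoonup 0$ in $E_1$ and $z_n\rightharpoonup 0$ in $E_2$. Since $(u_n,v_n)\in\mathcal{N}$, identity \eqref{paper4j8} reads
$$
\|(u_n,v_n)\|_{E}^{2}-2\int_{\mathbb{R}}\lambda(x)u_nv_n\;\ud x=\int_{\mathbb{R}}f_1(u_n)u_n\;\ud x+\int_{\mathbb{R}}f_2(v_n)v_n\;\ud x,
$$
and the aim is to rewrite this as $0=\langle I'(u_0,v_0),(u_0,v_0)\rangle+\langle I'(w_n,z_n),(w_n,z_n)\rangle+o_n(1)$, after which \eqref{paper4j22} and the dichotomy are immediate.

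First I would split the quadratic part. Since $E$ is a Hilbert space and $(w_n,z_n)\rightharpoonup(0,0)$, the standard expansion of $\|(w_n+u_0,z_n+v_0)\|_E^2$ together with the vanishing of the cross term yields $\|(u_n,v_n)\|_E^2=\|(u_0,v_0)\|_E^2+\|(w_n,z_n)\|_E^2+o_n(1)$. For the coupling term I would expand $u_nv_n=w_nz_n+w_nv_0+u_0z_n+u_0v_0$; by the same Cauchy--Schwarz argument used in Lemma~\ref{paper4nehari-1} together with \ref{paper4A3}, the maps $\phi\mapsto\int_{\mathbb{R}}\lambda(x)\phi v_0\;\ud x$ and $\psi\mapsto\int_{\mathbb{R}}\lambda(x)u_0\psi\;\ud x$ are continuous linear functionals on $E_1$ and $E_2$ respectively, so the two mixed integrals vanish as $n\to+\infty$ and $\int_{\mathbb{R}}\lambda(x)u_nv_n\;\ud x=\int_{\mathbb{R}}\lambda(x)u_0v_0\;\ud x+\int_{\mathbb{R}}\lambda(x)w_nz_n\;\ud x+o_n(1)$.

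Next come the nonlinear terms, which I would handle with the Brezis--Lieb type Lemma~\ref{paper4convergence}, applied to $u_n\rightharpoonup u_0$ with $f_1$ and to $v_n\rightharpoonup v_0$ with $f_2$. This is where the largeness of $\vartheta$ is genuinely used: to verify the smallness hypothesis of Lemma~\ref{paper4convergence} I would combine the bound on $\limsup_n\|(u_n,v_n)\|_E^2$ from Lemma~\ref{paper4principal}~\ref{paper4c} --- which can be made smaller than any prescribed quantity by taking $\vartheta$ large --- with Lemma~\ref{paper4emb}, $\|u_n\|_{1/2}^2\le\kappa^{-1}\|(u_n,v_n)\|_E^2$ and likewise for $v_n$, to get $\|u_n\|_{1/2},\|v_n\|_{1/2}<\rho_0$ for $n$ large; since the norm splitting already established gives $\limsup_n\|w_n\|_{1/2}\le\limsup_n\|u_n\|_{1/2}$, the difference sequences are controlled as well. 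Lemma~\ref{paper4convergence} then yields $\int_{\mathbb{R}}f_1(u_n)u_n\;\ud x=\int_{\mathbb{R}}f_1(w_n)w_n\;\ud x+\int_{\mathbb{R}}f_1(u_0)u_0\;\ud x+o_n(1)$ and the analogue for $f_2$.

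Substituting the three splittings into the Nehari identity, collecting all error terms into a single $o_n(1)$ and using the explicit formula for $I'$, what remains is exactly $0=\langle I'(u_0,v_0),(u_0,v_0)\rangle+\langle I'(w_n,z_n),(w_n,z_n)\rangle+o_n(1)$; in particular $\langle I'(w_n,z_n),(w_n,z_n)\rangle\to-\langle I'(u_0,v_0),(u_0,v_0)\rangle$, which is \eqref{paper4j22}. The dichotomy follows at once: if $\langle I'(u_0,v_0),(u_0,v_0)\rangle>0$, then $\liminf_{n\to+\infty}\langle I'(w_n,z_n),(w_n,z_n)\rangle=-\langle I'(u_0,v_0),(u_0,v_0)\rangle<0$. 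The main obstacle throughout is not any single estimate but the bookkeeping needed to legitimately invoke Lemma~\ref{paper4convergence}: keeping the $H^{1/2}$-norms of $u_n,v_n$ (and hence of $w_n,z_n$) below the Trudinger--Moser threshold, which is precisely what the hypothesis ``$\vartheta$ large'' provides.
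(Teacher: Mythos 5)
Your proposal is correct and follows essentially the same route as the paper: split the quadratic and coupling terms using the weak convergence $(w_n,z_n)\rightharpoonup(0,0)$ in $E$, apply the Brezis--Lieb type Lemma~\ref{paper4convergence} to the nonlinear terms, and combine with the Nehari identity \eqref{paper4j8} to obtain \eqref{paper4j22}. Your explicit verification of the smallness hypothesis of Lemma~\ref{paper4convergence} via Lemma~\ref{paper4principal}~\ref{paper4c} and Lemma~\ref{paper4emb} is a point the paper leaves implicit, but it is the same mechanism used elsewhere in the paper, not a different argument.
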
 
	\begin{proof}
		By easy computations we can deduce that
		$$
		\|u_{n}\|_{E_{1}}^{2}=\|w_{n}\|_{E_{1}}^{2}+\|u_{0}\|_{E_{1}}^{2}+2\left(\int_{\mathbb{R}}(-\Delta)^{1/4}w_{n}(-\Delta)^{1/4}u_{0}\;\ud x+\int_{\mathbb{R}}V_{1}(x)w_{n}u_{0}\;\ud x\right),
		$$
		\vspace{-0,2cm}
		$$
		\|v_{n}\|_{E_{2}}^{2}=\|z_{n}\|_{E_{2}}^{2}+\|v_{0}\|_{E_{2}}^{2}+2\left(\int_{\mathbb{R}}(-\Delta)^{1/4}z_{n}(-\Delta)^{1/4}v_{0}\;\ud x+\int_{\mathbb{R}}V_{2}(x)z_{n}v_{0}\;\ud x\right).
		$$
		Thus, since $(w_{n},z_{n})\rightharpoonup0$ weakly in $E$, we have
		\begin{eqnarray}\label{paper4j24}
		\|(u_{n},v_{n})\|_{E}^{2} & = & \|(w_{n},z_{n})\|_{E}^{2}+\|(u_{0},v_{0})\|_{E}^{2}+2((w_{n},z_{n}),(u_{0},v_{0}))\nonumber\\
		& = & \|(w_{n},z_{n})\|_{E}^{2}+\|(u_{0},v_{0})\|_{E}^{2}+o_{n}(1).
		\end{eqnarray}
		Moreover, we have also that
		$$
		\int_{\mathbb{R}}\lambda(x)w_{n}z_{n}\;\ud x=\int_{\mathbb{R}}\lambda(x)u_{n}v_{n}\;\ud x+\int_{\mathbb{R}}\lambda(x)u_{0}v_{0}\;\ud x-\int_{\mathbb{R}}\lambda(x)u_{n}v_{0}\;\ud x-\int_{\mathbb{R}}\lambda(x)v_{n}u_{0}\;\ud x.
		$$
		By the weak convergence we have the following convergences
		$$
		\int_{\mathbb{R}}\lambda(x)v_{0}u_{n}\;\ud x\rightarrow\int_{\mathbb{R}}\lambda(x)v_{0}u_{0}\;\ud x \quad \mbox{and} \quad \int_{\mathbb{R}}\lambda(x)u_{0}v_{n}\;\ud x\rightarrow\int_{\mathbb{R}}\lambda(x)v_{0}u_{0}\;\ud x,
		$$
		which yields	
		\begin{equation}\label{paper4j25}
		\int_{\mathbb{R}}\lambda(x)w_{n}z_{n}\;\ud x=\int_{\mathbb{R}}\lambda(x)u_{n}v_{n}\;\ud x-\int_{\mathbb{R}}\lambda(x)u_{0}v_{0}\;\ud x+o_{n}(1).
		\end{equation}
		By using Lemma~\ref{paper4convergence}, \eqref{paper4j24}, \eqref{paper4j25} and the fact that $(u_{n},v_{n})_{n}\subset\mathcal{N}$, we conclude that
		$$
		\liminf_{n\rightarrow+\infty}\langle I'(w_{n},z_{n}),(w_{n},z_{n})\rangle=-\langle I'(u_{0},v_{0}),(u_{0},v_{0})\rangle,
		$$
		which completes the proof.
	\end{proof}
	
	\begin{proposition}\label{paper4p2}
		The weak limit $(u_{0},v_{0})$ satisfies $\langle I'(u_{0},v_{0}),(u_{0},v_{0})\rangle=0$.
	\end{proposition}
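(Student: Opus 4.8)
The plan is to argue by contradiction, exploiting the dichotomy in Lemma~\ref{paper4j32}. As a preliminary remark, for $(u,v)\in\mathcal{N}$ one has $\langle I'(u,v),(u,v)\rangle=0$, so a direct computation using \eqref{paper4j8} gives the useful identity
$$
I(u,v)=\frac12\int_{\mathbb{R}}\bigl(\phi_1(u)+\phi_2(v)\bigr)\;\ud x,
$$
where $\phi_i(s)=f_i(s)s-2F_i(s)$. Recall from \eqref{paper4j14} that $\phi_i\ge0$ on $\mathbb{R}$ with $\phi_i>0$ on $\mathbb{R}\setminus\{0\}$, and from \eqref{paper4j31} that $\phi_i(ts)<\phi_i(s)$ whenever $t\in(0,1)$ and $s\neq0$. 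Now suppose, for contradiction, that $\langle I'(u_0,v_0),(u_0,v_0)\rangle\neq0$. By \eqref{paper4j22}, either (i) $\langle I'(u_0,v_0),(u_0,v_0)\rangle<0$, or (ii) $\langle I'(u_0,v_0),(u_0,v_0)\rangle>0$, in which case $\liminf_{n\to\infty}\langle I'(w_n,z_n),(w_n,z_n)\rangle<0$ with $w_n=u_n-u_0$ and $z_n=v_n-v_0$. We rule out each case.

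Case (i). By the last assertion of Lemma~\ref{paper4j23} there is $t_0\in(0,1)$ with $(t_0u_0,t_0v_0)\in\mathcal{N}$, hence
$$
c_{\mathcal{N}}\le I(t_0u_0,t_0v_0)=\frac12\int_{\mathbb{R}}\bigl(\phi_1(t_0u_0)+\phi_2(t_0v_0)\bigr)\;\ud x.
$$
Since $(u_0,v_0)\neq(0,0)$ by Proposition~\ref{paper4p1}, \eqref{paper4j31} gives the \emph{strict} inequality $\int_{\mathbb{R}}(\phi_1(t_0u_0)+\phi_2(t_0v_0))\,\ud x<\int_{\mathbb{R}}(\phi_1(u_0)+\phi_2(v_0))\,\ud x$. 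On the other hand, passing to a further subsequence so that $u_n\to u_0$, $v_n\to v_0$ a.e.\ (by the local compact embedding of $H^{1/2}(\mathbb{R})$), Fatou's lemma applied to the nonnegative integrand $\phi_1(u_n)+\phi_2(v_n)$ yields
$$
\frac12\int_{\mathbb{R}}\bigl(\phi_1(u_0)+\phi_2(v_0)\bigr)\;\ud x\le\liminf_{n\to\infty}\frac12\int_{\mathbb{R}}\bigl(\phi_1(u_n)+\phi_2(v_n)\bigr)\;\ud x=\liminf_{n\to\infty}I(u_n,v_n)=c_{\mathcal{N}}.
$$
Chaining the three displays gives $c_{\mathcal{N}}<c_{\mathcal{N}}$, a contradiction.

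Case (ii). Along a subsequence $\langle I'(w_n,z_n),(w_n,z_n)\rangle<0$ for all large $n$; in particular $(w_n,z_n)\neq(0,0)$, so Lemma~\ref{paper4j23} provides $t_n\in(0,1)$ with $(t_nw_n,t_nz_n)\in\mathcal{N}$, and therefore, using again \eqref{paper4j31},
$$
c_{\mathcal{N}}\le I(t_nw_n,t_nz_n)=\frac12\int_{\mathbb{R}}\bigl(\phi_1(t_nw_n)+\phi_2(t_nz_n)\bigr)\;\ud x\le\frac12\int_{\mathbb{R}}\bigl(\phi_1(w_n)+\phi_2(z_n)\bigr)\;\ud x.
$$
This is where the largeness of $\vartheta$ enters: by Lemma~\ref{paper4principal}~\ref{paper4c} together with Lemma~\ref{paper4emb}, the norms $\|u_n\|_{1/2},\|v_n\|_{1/2}$ and hence $\|w_n\|_{1/2},\|z_n\|_{1/2}$ stay below the threshold $\rho_0$ required by Lemma~\ref{paper4convergence}; applying that Brezis--Lieb type lemma to $f_i$ and $F_i$ gives $\int_{\mathbb{R}}\phi_i(u_n)\,\ud x=\int_{\mathbb{R}}\phi_i(w_n)\,\ud x+\int_{\mathbb{R}}\phi_i(u_0)\,\ud x+o_n(1)$, and similarly for $v_n,z_n,v_0$. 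Combining with $I(u_n,v_n)=\frac12\int_{\mathbb{R}}(\phi_1(u_n)+\phi_2(v_n))\,\ud x\to c_{\mathcal{N}}$ we obtain
$$
\frac12\int_{\mathbb{R}}\bigl(\phi_1(w_n)+\phi_2(z_n)\bigr)\;\ud x=c_{\mathcal{N}}-\frac12\int_{\mathbb{R}}\bigl(\phi_1(u_0)+\phi_2(v_0)\bigr)\;\ud x+o_n(1).
$$
Feeding this into the previous display forces $\int_{\mathbb{R}}(\phi_1(u_0)+\phi_2(v_0))\,\ud x\le0$, so $\phi_1(u_0)=\phi_2(v_0)=0$ a.e.\ and hence $(u_0,v_0)=(0,0)$ by \eqref{paper4j14}, contradicting Proposition~\ref{paper4p1}. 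Since both cases are impossible, $\langle I'(u_0,v_0),(u_0,v_0)\rangle=0$.

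I expect Case (ii) to be the main obstacle: one must turn the ``mass leaking to infinity'' recorded by $\liminf\langle I'(w_n,z_n),(w_n,z_n)\rangle<0$ into a contradiction by projecting the tail $(w_n,z_n)$ back onto $\mathcal{N}$ and controlling $I$ there. The crux is that the Brezis--Lieb splitting for the exponential-critical terms (Lemma~\ref{paper4convergence}) is only available under a smallness condition on the $H^{1/2}$-norms, which is exactly what the hypothesis $\vartheta>\vartheta_0$ (cf.\ \eqref{size}) is designed to guarantee through Lemma~\ref{paper4principal}~\ref{paper4c}; Case~(i) is comparatively soft, resting only on the projection Lemma~\ref{paper4j23}, the monotonicity \eqref{paper4j31}, and Fatou.
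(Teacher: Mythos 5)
Your proof is correct, and it splits naturally into a part that coincides with the paper and a part that does not. Your Case (i) is precisely the paper's Step 1: projection onto $\mathcal{N}$ with $t_{0}\in(0,1)$ via Lemma~\ref{paper4j23}, the strict monotonicity \eqref{paper4j31}, and Fatou's lemma. Your Case (ii), however, follows a genuinely shorter route than the paper's Step 2. The paper also projects $(w_{n},z_{n})$ onto $\mathcal{N}$ with $t_{n}\in(0,1)$, but then it analyzes the limit of $t_{n}$: it first excludes $t_{n}\rightarrow1$ through the mean value identities \eqref{paper4j51}--\eqref{paper4j52} and the uniform Trudinger--Moser bounds \eqref{paper4j40}--\eqref{paper441}, and then performs the Brezis--Lieb splitting at the level $t_{n}u_{n}\rightharpoonup t_{0}u_{0}$ with further mean value corrections \eqref{paper4j56}--\eqref{paper4j59}, finally reaching $I(t_{n}w_{n},t_{n}z_{n})<c_{\mathcal{N}}$. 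You bypass the entire study of $t_{n}$: you only use $t_{n}\in(0,1)$, the pointwise inequality $\phi_{i}(t_{n}s)\leq\phi_{i}(s)$, and the Brezis--Lieb splitting of $\int_{\mathbb{R}}\phi_{i}(u_{n})\,\ud x$ at scale one (Lemma~\ref{paper4convergence}, legitimately applicable because Lemma~\ref{paper4principal}~\ref{paper4c} together with Lemma~\ref{paper4emb} keeps $\|u_{n}\|_{1/2}$ and $\|v_{n}\|_{1/2}$ below the smallness threshold once $\vartheta$ is large), which forces $\int_{\mathbb{R}}\bigl(\phi_{1}(u_{0})+\phi_{2}(v_{0})\bigr)\ud x\leq0$ and hence $(u_{0},v_{0})=(0,0)$ by \eqref{paper4j14}, contradicting Proposition~\ref{paper4p1}. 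So the contradiction you produce (triviality of the weak limit) differs from the paper's (an element of $\mathcal{N}$ with energy below $c_{\mathcal{N}}$), yet it uses only ingredients already in the paper --- Lemma~\ref{paper4j32}, the projection lemma, \eqref{paper4j31}, \eqref{paper4j14}, and the largeness of $\vartheta$ feeding Lemma~\ref{paper4convergence} --- while avoiding the technical core of the published Step 2 (the convergence $t_{n}\rightarrow t_{0}\in(0,1)$ and the accompanying mean value and uniform integrability estimates); what the paper's longer argument buys is the extra information about $t_{n}$ itself, which is not needed for the statement of the proposition.
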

	\begin{proof}
		We have divided the proof into two steps.
		
		\vspace{0,2cm} 
		\noindent \textit{Step 1.} We first prove that $\langle I'(u_{0},v_{0}),(u_{0},v_{0})\rangle\geq0$.
		\vspace{0,2cm} 
		
		Suppose by contradiction that $\langle I'(u_{0},v_{0}),(u_{0},v_{0})\rangle<0$. Thus, from Lemma \ref{paper4j23}, there exists $t_{0}\in(0,1)$ such that $(t_{0}u_{0},t_{0}v_{0})\in\mathcal{N}$. By using \eqref{paper4j14} and Fatou's lemma, we obtain
		$$
		c_{\mathcal{N}}+o_{n}(1)=\frac{1}{2}\int_{\mathbb{R}}(\phi_{1}(u_{n})+\phi_{2}(v_{n}))\;\ud x\geq \frac{1}{2}\int_{\mathbb{R}}(\phi_{1}(u_{0})+\phi_{2}(v_{0}))\;\ud x+o_{n}(1).
		$$
		Since $t_{0}\in(0,1)$, it follows from \eqref{paper4j31} that
		$$
		\frac{1}{2}\int_{\mathbb{R}}(\phi_{1}(u_{0})+\phi_{2}(v_{0}))\;\ud x+o_{n}(1)>\frac{1}{2}\int_{\mathbb{R}}(\phi_{1}(t_{0}u_{0})+\phi_{2}(t_{0}v_{0}))\;\ud x+o_{n}(1).
		$$
		Combining these estimates and using the fact that $(t_{0}u_{0},t_{0}v_{0})\in\mathcal{N}$, we conclude that
		$$
		c_{\mathcal{N}}+o_{n}(1)>I(t_{0}u_{0},t_{0}v_{0})-\frac{1}{2}\langle I'(t_{0}u_{0},t_{0}v_{0}),(t_{0}u_{0},t_{0}v_{0})\rangle+o_{n}(1)= I(t_{0}u_{0},t_{0}v_{0})+o_{n}(1).
		$$
		Hence, $ I(t_{0}u_{0},t_{0}v_{0})<c_{\mathcal{N}}$, which is a contradiction. Therefore, $\langle I'(u_{0},v_{0}),(u_{0},v_{0})\rangle\geq0$. 
		
		\vspace{0,2cm} 
		\noindent \textit{Step 2.} Now we are going to show that $\langle I'(u_{0},v_{0}),(u_{0},v_{0})\rangle=0$.
		\vspace{0,2cm} 
		
		Suppose by contradiction, that $\langle I'(u_{0},v_{0}),(u_{0},v_{0})\rangle>0$. By Lemma \ref{paper4j32}, we have that
		\begin{equation}\label{paper4j54}
		\liminf_{n\rightarrow+\infty}\langle I'(w_{n},z_{n}),(w_{n},z_{n})\rangle<0.
		\end{equation}
		Thus, passing to a subsequence, we have $\langle I'(w_{n},z_{n}),(w_{n},z_{n})\rangle<0$, for $n\in\mathbb{N}$ sufficiently large. By the Lemma \ref{paper4j23}, there exists a sequence $(t_{n})_{n}\subset(0,1)$ such that $(t_{n}w_{n},t_{n}z_{n})_{n}\subset\mathcal{N}$. Passing to a subsequence, we may assume that $t_{n}\rightarrow t_{0}\in(0,1]$. Arguing by contradiction, we suppose that $t_{0}=1$. Thus, it follows that
		\begin{equation}\label{paper4jj1}
		\|(w_{n},z_{n})\|_{E}^{2}-2\int_{\mathbb{R}}\lambda(x)w_{n}z_{n}\;\ud x=\|(t_{n}w_{n},t_{n}z_{n})\|_{E}^{2}-2\int_{\mathbb{R}}\lambda(x)t_{n}w_{n}t_{n}z_{n}\;\ud x+o_{n}(1).
		\end{equation}
		If we prove the following convergences
		\begin{equation}\label{paper4jj3}
		\int_{\mathbb{R}}f_{1}(w_{n})w_{n}\;\ud x=\int_{\mathbb{R}}f_{1}(t_{n}w_{n})t_{n}w_{n}\;\ud x+o_{n}(1),
		\end{equation}
		\begin{equation}\label{paper4jj4}
		\int_{\mathbb{R}}f_{2}(z_{n})z_{n}\;\ud x=\int_{\mathbb{R}}f_{2}(t_{n}z_{n})t_{n}z_{n}\;\ud x+o_{n}(1),
		\end{equation}
		then combining with \eqref{paper4jj1} and the fact that $(t_{n}w_{n},t_{n}z_{n})_{n}\subset\mathcal{N}$ we conclude that
		$$
		\langle I'(w_{n},z_{n}),(w_{n},z_{n})\rangle=\langle I'(t_{n}w_{n},t_{n}z_{n}),(t_{n}w_{n},t_{n}z_{n})\rangle+o_{n}(1)=o_{n}(1),
		$$
		which contradicts \eqref{paper4j54}. This contradiction implies that $t_{0}\in(0,1)$. It remains to prove \eqref{paper4jj3} and \eqref{paper4jj4}. For this purpose, for each $i=1,2$ we apply the mean value theorem to the function $g_{i}(t)=f_{i}(t)t$. Thus, we get a sequence of functions $(\tau^{i}_{n})_{n}\subset(0,1)$ such that
		\begin{equation}\label{paper4j51}
		f_{1}(w_{n})w_{n}-f_{1}(t_{n}w_{n})t_{n}w_{n}=(f_{1}'(\sigma^{i}_{n})\sigma^{i}_{n}+f_{1}(\sigma^{i}_{n}))w_{n}(1-t_{n}),
		\end{equation}
		\begin{equation}\label{paper4j52}
		f_{2}(z_{n})z_{n}-f_{2}(t_{n}z_{n})t_{n}z_{n}=(f_{2}'(\sigma^{i}_{n})\sigma^{i}_{n}+f_{2}(\sigma^{i}_{n}))z_{n}(1-t_{n}),
		\end{equation}
		where $\sigma_{n}^{1}=w_{n}+\tau^ {1}_{n}w_{n}(t_{n}-1)$ and $\sigma_{n}^{2}=z_{n}+\tau^{2}_{n}z_{n}(t_{n}-1)$. By using Lemma~\ref{paper4principal}~\ref{paper4c}, there exists $\vartheta_{0}>0$ such that $\kappa^{-1}\|(u_{n},v_{n})\|_{E}^{2}\leq\rho_{0}^{2}$, for some $\alpha>\alpha_{0}$, $0<\alpha\rho_{0}^{2}<\omega$ and $\vartheta>\vartheta_{0}$. Since we have
		$$
		\|u_{n}\|_{E_{1}}^{2}=\|w_{n}\|_{E_{1}}^{2}+\|u_{0}\|_{E_{1}}^{2}+o_{n}(1),
		$$
		it follows that $\kappa^{-1}\limsup_{n\rightarrow+\infty}\|w_{n}\|_{E_{1}}^{2}\leq\rho_{0}^{2}$. Thus, up to a subsequence, we get
		$$
		\|\sigma^{1}_{n}\|_{E_{1}}=\|w_{n}+\tau^{1}_{n}w_{n}(t_{n}-1)\|_{E_{1}}= |1-(1-t_{n})\tau^{1}_{n}|\|w_{n}\|_{E_{1}}\leq\kappa\rho_{0},
		$$
		for $n\in\mathbb{N}$ sufficiently large. We claim that
		\begin{equation}\label{paper4j40}
		\sup_{n}\int_{\mathbb{R}}f_{1}(\sigma_{n}^{1})w_{n}\;\ud x<\infty \quad \mbox{and} \quad \sup_{n}\int_{\mathbb{R}}f_{1}'(\sigma_{n}^{1})\sigma_{n}^{1}w_{n}\;\ud x<\infty,
		\end{equation}
		\begin{equation}\label{paper441}
		\sup_{n}\int_{\mathbb{R}}f_{2}(\sigma_{n}^{2})z_{n}\;\ud x<\infty \quad \mbox{and} \quad \sup_{n}\int_{\mathbb{R}}f_{2}'(\sigma_{n}^{2})\sigma_{n}^{2}z_{n}\;\ud x<\infty.
		\end{equation}
		In fact, for $p>2$ it follows from \eqref{paper4ce}, \eqref{paper4j15} and H\"older inequality that 
		$$
		\int_{\mathbb{R}}f_{1}(\sigma_{n}^{1})w_{n}\;\ud x\leq C\|\sigma_{n}^{1}\|_{E_{1}}\|w_{n}\|_{E_{1}}+C\int_{\mathbb{R}}(e^{\alpha(\sigma_{n}^{1})^{2}}-1)|\sigma_{n}^{1}|^{p-1}|w_{n}|\;\ud x.
		$$
		Consider $r>l>1$, sufficiently close to $1$, such that $0<r\alpha\rho_{0}^{2}<\omega$. By using Sobolev embedding, Lemma~\ref{paper4tm}, Lemma~\ref{paper4exp} and H\"older inequality we get
		\begin{eqnarray*}
			\int_{\mathbb{R}}(e^{\alpha(\sigma_{n}^{1})^{2}}-1)|\sigma_{n}^{1}|^{p-1}|w_{n}|\;\ud x & \leq & \left(\int_{\mathbb{R}}(e^{r\alpha(\sigma_{n}^{1})^{2}}-1)\;\ud x\right)^{1/l}\left(\int_{\mathbb{R}}|\sigma_{n}^{1}|^{l'(p-1)}|w_{n}|^{l'}\;\ud x\right)^{1/l'}\\
			& \leq & C\left(\int_{\mathbb{R}}|\sigma_{n}^{1}|^{2l'(p-1)}\;\ud x\right)^{1/2l'}\left(\int_{\mathbb{R}}|w_{n}|^{2l'}\;\ud x\right)^{1/2l'}\\
			& \leq & C\|\sigma_{n}^{1}\|_{E_{1}}^{p-1}\|w_{n}\|_{E_{1}},
		\end{eqnarray*}
		where $1/l+1/l'=1$ and we have used the fact that $2l'(p-1)>2$. Therefore,
		$$
		\int_{\mathbb{R}}f_{1}(\sigma_{n}^{1})w_{n}\;\ud x\leq C\|\sigma_{n}^{1}\|_{E_{1}}\|w_{n}\|_{E_{1}}+C\|\sigma_{n}^{1}\|^{p-1}\|w_{n}\|_{E_{1}}\leq C\rho_{0}^{2}+C\rho_{0}^{p-1}\rho_{0}<\infty.
		$$ 
		By using \eqref{paper4j39} and similar computations we obtain
		$$
		\int_{\mathbb{R}}f_{1}'(\sigma_{n}^{1})\sigma_{n}^{1}w_{n}\;\ud x \leq C\|\sigma_{n}^{1}\|_{E_{1}}\|w_{n}\|_{E_{1}}+C\|\sigma_{n}^{1}\|_{E_{1}}^{p-1}\|w_{n}\|_{E_{1}}<\infty.
		$$
		Analogously we obtain \eqref{paper441} and the claim is proved. 
		
		By using \eqref{paper4j40} and \eqref{paper441} we conclude that
		\begin{equation}\label{paper4j53}
		\sup_{n}\int_{\mathbb{R}}|f_{1}(\sigma_{n}^{1})\sigma_{n}^{1}+f_{1}(\sigma_{n}^{1})||w_{n}|\;\ud x<\infty \quad \mbox{and} \quad \sup_{n}\int_{\mathbb{R}}|f_{2}(\sigma_{n}^{2})\sigma_{n}^{2}+f_{2}(\sigma_{n}^{2})||z_{n}|\;\ud x<\infty.
		\end{equation}
		Finally, combining \eqref{paper4j51}, \eqref{paper4j52}, \eqref{paper4j53} and $t_{n}\rightarrow1$, we get \eqref{paper4jj3} and \eqref{paper4jj4}.
		
		The preceding arguments concluded that, up to a subsequence, $t_{n}\rightarrow t_{0}\in(0,1)$. By a similar argument used in the \textit{Step 1}, we can deduce that
		\begin{equation}\label{paper4j58}
		c_{\mathcal{N}}+o_{n}(1)=\frac{1}{2}\int_{\mathbb{R}}(\phi_{1}(u_{n})+\phi_{2}(v_{n}))\;\ud x\geq\frac{1}{2}\int_{\mathbb{R}}(\phi_{1}(t_{n}u_{n})+\phi_{2}(t_{n}v_{n}))\;\ud x.
		\end{equation} 
		Notice that $t_{n}u_{n}\rightharpoonup t_{0}u_{0}$ and $\kappa^{-1}\|t_{n}u_{n}\|_{E_{1}}^{2}\leq\rho_{0}^{2}$. Thus, by using Lemma \ref{paper4convergence} we have
		\begin{equation}\label{paper4j60}
		\int_{\mathbb{R}}\phi_{1}(t_{n}u_{n})\;\ud x =  \int_{\mathbb{R}}\phi_{1}(t_{n}u_{n}-t_{0}u_{0})\;\ud x+\int_{\mathbb{R}}\phi_{1}(t_{0}u_{0})\;\ud x.
		\end{equation}
		Let us denote $\hat{t}_{n}=t_{n}-t_{0}\rightarrow0$. By the mean value theorem, there exists a sequence of functions $(\gamma_{n})_{n}\subset(0,1)$ such that
		$$
		\phi_{1}(t_{n}u_{n}-t_{0}u_{0})-\phi_{1}(t_{n}w_{n})=\phi_{1}'((1-\gamma_{n})(t_{n}u_{n}-t_{0}u_{0})+\gamma t_{n}w_{n})\hat{t}_{n}u_{0}.
		$$
		Notice that $t_{n}u_{n}-t_{0}u_{0}=t_{n}w_{n}+\hat{t}_{n}u_{0}.$ Thus, it follows that
		\begin{equation}\label{paper4j56}
		\phi_{1}(t_{n}u_{n}-t_{0}u_{0})-\phi_{1}(t_{n}w_{n})=\phi_{1}'(\zeta_{n})\hat{t}_{n}u_{0},
		\end{equation}
		where $\zeta_{n}=(1-\gamma_{n})\hat{t}_{n}u_{0}+t_{n}w_{n}$. Recalling that $\kappa^{-1}\|w_{n}\|_{E_{1}}^{2}\leq\rho_{0}^{2}$ we have
		$$
		\|\zeta_{n}\|_{E_{1}}=\|(1-\gamma_{n})\hat{t}_{n}u_{0}+t_{n}w_{n}\|_{E_{1}}\leq \hat{t}_{n}\|u_{0}\|_{E_{1}}+t_{n}\|w_{n}\|_{E_{1}}\leq\rho_{0},
		$$
		for $n$ sufficiently large. Repeating the same argument used to deduce \eqref{paper4j53}, we get
		\begin{equation}\label{paper4j57} 
		\sup_{n}\int_{\mathbb{R}}|\phi_{1}'(\zeta_{n})||u_{0}|\;\ud x\leq \sup_{n}\int_{\mathbb{R}}|f_{1}'(\zeta_{n})\zeta_{n}+f_{1}(\zeta_{n})||w_{n}|\;\ud x<\infty.
		\end{equation}
		By using \eqref{paper4j56}, \eqref{paper4j57} and the fact that $\hat{t}_{n}\rightarrow0$, we conclude that
		\begin{equation}\label{paper4j59}
		\int_{\mathbb{R}}\phi_{1}(t_{n}u_{n}-t_{0}u_{0})\;\ud x=\int_{\mathbb{R}}\phi_{1}(t_{n}w_{n})\;\ud x+o_{n}(1).
		\end{equation}
		Since $t_{n}v_{n}\rightharpoonup t_{0}v_{0}$ and $\kappa^{-1}\|t_{n}v_{n}\|_{E_{2}}^{2}\leq\rho_{0}^{2}$, we can check analogously that
		\begin{equation}\label{paper4j61}
		\int_{\mathbb{R}}\phi_{2}(t_{n}v_{n})\;\ud x =  \int_{\mathbb{R}}\phi_{2}(t_{n}v_{n}-t_{0}v_{0})\;\ud x+\int_{\mathbb{R}}\phi_{2}(t_{0}v_{0})\;\ud x,
		\end{equation}
		\begin{equation}\label{paper4j62}
		\int_{\mathbb{R}}\phi_{2}(t_{n}v_{n}-t_{0}v_{0})\;\ud x=\int_{\mathbb{R}}\phi_{2}(t_{n}z_{n})\;\ud x+o_{n}(1).
		\end{equation}
		Using \eqref{paper4j58} and the fact that $(u_{n},v_{n})\in\mathcal{N}$ we deduce that
		 $$
		  c_{\mathcal{N}}+o_{n}(1) = I(u_{n},v_{n})-\frac{1}{2}\langle I'(u_{n},v_{n}),(u_{n},v_{n})\rangle \geq \frac{1}{2}\int_{\mathbb{R}}(\phi_{1}(t_{n}u_{n})+\phi_{2}(t_{n}v_{n}))\;\ud x,
		 $$
		which combined with \eqref{paper4j60}, \eqref{paper4j59}, \eqref{paper4j61} and \eqref{paper4j62} implies that
		 $$
		  c_{\mathcal{N}}+o_{n}(1) \geq \frac{1}{2}\int_{\mathbb{R}}(\phi_{1}(t_{n}w_{n})+\phi_{2}(t_{n}z_{n}))\;\ud x+\frac{1}{2}\int_{\mathbb{R}}(\phi_{1}(t_{0}u_{0})+\phi_{2}(t_{0}v_{0}))\;\ud x+o_{n}(1).
		 $$
		Therefore, using the fact that $(t_{n}w_{n},t_{n}z_{n})\in\mathcal{N}$ we conclude that
		 \begin{equation}\label{paper4j123}		 
		  c_{\mathcal{N}}+o_{n}(1) \geq I(t_{n}w_{n},t_{n}z_{n})+\frac{1}{2}\int_{\mathbb{R}}(\phi_{1}(t_{0}u_{0})+\phi_{2}(t_{0}v_{0}))\;\ud x+o_{n}(1).
		 \end{equation}
		Since $(u_{0},v_{0})\neq(0,0)$, it follows from \eqref{paper4j14} that
		$$
		\frac{1}{2}\int_{\mathbb{R}}(\phi_{1}(t_{0}u_{0})+\phi_{2}(t_{0}v_{0}))\;\ud x>0,
		$$
		which jointly with \eqref{paper4j123} implies that $I(t_{n}w_{n},t_{n}z_{n})<c_{\mathcal{N}}$ for $n$ large, contradicting the definition of $c_{\mathcal{N}}$. Therefore, $\langle I'(u_{0},v_{0}),(u_{0},v_{0})\rangle=0$ and the proof is complete.
		

	\end{proof}
	
	\begin{proof}[Proof of Theorem~\ref{paper4A} completed]
		Finally, we will conclude that $(u_{0},v_{0})$ is in fact a ground state solution for System~\eqref{paper4j0}, even though we do not know if $(u_{n},v_{n})$ converges strongly in $E$. By the Propositions \ref{paper4p1} and \ref{paper4p2}, we have that $(u_{0},v_{0})\in\mathcal{N}$. Thus, $c_{\mathcal{N}}\leq I(u_{0},v_{0})$. On the other hand, by using \eqref{paper4j14} and similar arguments as used before, we deduce that
		$$
		c_{\mathcal{N}}+o_{n}(1)=\frac{1}{2}\int_{\mathbb{R}}(\phi_{1}(u_{n})+\phi_{2}(v_{n}))\;\ud x\geq \frac{1}{2}\int_{\mathbb{R}}(\phi_{1}(u_{0})+\phi_{2}(v_{0}))\;\ud x+o_{n}(1)=I(u_{0},v_{0})+o_{n}(1),
		$$
		which implies that $c_{\mathcal{N}}\geq I(u_{0},v_{0})$. Therefore $ I(u_{0},v_{0})=c_{\mathcal{N}}$ and jointly with Remark~\ref{paper4remark} implies that $(u_{0},v_{0})$ is a ground state solution for System~\eqref{paper4j0}. 
		
		In order to get a nonnegative ground state, we note that $I(|u_{0}|,|v_{0}|)\leq I(u_{0},v_{0})$. Moreover, by using Lemma~\ref{paper4j23}, there exists $t_{0}>0$, depending on $(|u_{0}|,|v_{0}|)$, such that $(t_{0}|u_{0}|,t_{0}|v_{0}|)\in\mathcal{N}$. Since $(u_{0},v_{0})\in\mathcal{N}$, we have also from Lemma~\ref{paper4j23} that $\max_{t\geq0}I(tu_{0},tv_{0})=I(u_{0},v_{0})$. Hence,
		$$
		I(t_{0}|u_{0}|,t_{0}|v_{0}|)\leq I(t_{0}u_{0},t_{0}v_{0})\leq \max_{t\geq0}I(tu_{0},tv_{0})=I(u_{0},v_{0})=c_{\mathcal{N}}.
		$$
		Therefore, $(t_{0}|u_{0}|,t_{0}|v_{0}|)\in\mathcal{N}$ is a nonnegative ground state solution for System~\eqref{paper4j0}	which finishes the proof of Theorem \ref{paper4A}.
	\end{proof}
	
	
	\begin{remark}\label{paper4r2}
		Let $\mathcal{K}$ be the set of all ground state solutions for System~\eqref{paper4j0}, that is,
		$$
		\mathcal{K}:=\{(u,v)\in E: (u,v)\in\mathcal{N}, \ I(u,v)=c_{\mathcal{N}} \ \mbox{and} \  I'(u,v)=0\}.
		$$
		We claim that $\mathcal{K}$ is a compact subset of $E$. Indeed, take $(u_{n},v_{n})_{n}\subset\mathcal{K}$ a bounded sequence, thus, up to a subsequence, we may assume $(u_{n},v_{n})\rightharpoonup(u,v)$ weakly in $E$. Proceeding analogously to the proof of Proposition~\ref{paper4p1}, we can conclude that there exists a sequence $(y_{n})_{n}\subset\mathbb{Z}$ and constants $R,\xi>0$ such that
		$$
		\liminf_{n\rightarrow\infty}\int_{B_{R}(y_n)}(u_{n}^{2}+v_{n}^{2})\;\ud x\geq\xi>0.
		$$
		Using the invariance of $I$, we may conclude that $(u,v)\neq0$. Repeating the same arguments used in the proof of Proposition~\ref{paper4p2}, we deduce that $(u,v)\in\mathcal{N}$. As before, we see also that $I(u,v)=c_{\mathcal{N}}$. Thus, using \ref{paper4f3} and Fatou's lemma, we can deduce that
		\begin{eqnarray}
		c_{\mathcal{N}}+o_{n}(1) & = & I(u_{n},v_{n})-\frac{1}{\mu}\langle I'(u_{n},v_{n}),(u_{n},v_{n})\rangle\nonumber\\
		& \geq & I(u,v)-\frac{1}{\mu}\langle I'(u,v),(u,v)\rangle+o_{n}(1)\nonumber\\
		& = & c_{\mathcal{N}}+o_{n}(1)\nonumber.
		\end{eqnarray}
		Thus, $\|(u_{n},v_{n})\|\rightarrow\|(u,v)\|$, which implies that $(u_{n},v_{n})\rightarrow(u,v)$ strongly in $E$. Therefore, $\mathcal{K}$ is a compact set in $E$.
	\end{remark}
	
	
	\section{Proof of Theorem~\ref{paper4B}}\label{paper4t2}
	
	In this section we will be concerned with the existence of ground states for the asymptotically periodic case. We emphasize that the only difference between the potentials $V_{i}(x)$, $\lambda(x)$ and $\tilde{V_{i}}(x)$, $\tilde{\lambda}(x)$ is the periodicity by translations required to $V_{i}(x)$ and $\lambda(x)$. Thus, if $\tilde{V_{i}}(x)$ and $\tilde{\lambda}(x)$ are periodic potentials, we can make use of Theorem~\ref{paper4A} to get a ground state solution for System~\eqref{paper4j00}. Let us suppose that they are not periodic. 
	
	Associated to System~\eqref{paper4j00}, we have the following energy functional 
	$$
	\tilde{I}(u,v)=\frac{1}{2}\left(\|(u,v)\|_{\tilde{E}}^{2}-2\int_{\mathbb{R}}\tilde{\lambda}(x)uv\;\ud x\right)-\int_{\mathbb{R}}\left(F_{1}(u)+F_{2}(v)\right)\;\ud x.
	$$
	
	\noindent The Nehari manifold for System~\eqref{paper4j00} is defined by
	$$
	\tilde{\mathcal{N}}=\{(u,v)\in\tilde{E}\backslash\{(0,0)\}:\langle\tilde{I}'(u,v),(u,v)\rangle=0\},
	$$
	and the ground state energy associated $c_{\tilde{\mathcal{N}}}=\inf_{\tilde{\mathcal{N}}}\tilde{I}(u,v)$. Similarly to Section~\ref{paper4subneh}, for any $(u,v)\in\tilde{\mathcal{N}}$, we can deduce that
	$$   
	\tilde{I}(u,v)\geq\left(\frac{1}{2}-\frac{1}{\mu}\right)(1-\delta)\|(u,v)\|_{\tilde{E}}^{2}\geq \left(\frac{1}{2}-\frac{1}{\mu}\right)(1-\delta)\rho>0.
	$$ 
	Hence, $c_{\tilde{\mathcal{N}}}>0$. The next step is to establish a relation between the levels $c_{\mathcal{N}}$ and $c_{\tilde{\mathcal{N}}}$. 
	
	\begin{lemma}\label{paper4est}
		$c_{\tilde{\mathcal{N}}}<c_{\mathcal{N}}$.
	\end{lemma}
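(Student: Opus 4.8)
The plan is to test the asymptotically periodic functional $\tilde I$ along the ray generated by the nonnegative ground state of the periodic problem. First I would invoke Theorem~\ref{paper4A} to fix a nonnegative ground state $(u_{0},v_{0})\in\mathcal{N}$ of System~\eqref{paper4j0}; thus $u_{0},v_{0}\geq0$, $(u_{0},v_{0})\neq(0,0)$, $I(u_{0},v_{0})=c_{\mathcal{N}}$, and since $(u_{0},v_{0})\in\mathcal{N}$ Lemma~\ref{paper4j23} gives $\max_{t\geq0}I(tu_{0},tv_{0})=I(u_{0},v_{0})=c_{\mathcal{N}}$. Next I would note that \ref{paper4A5} and \ref{paper4A6} are precisely the analogues of \ref{paper4A2} and \ref{paper4A3} for the tilded data, so Lemma~\ref{paper4j23} applies verbatim to $\tilde I$: there is a unique $\tilde t_{0}>0$ with $(\tilde t_{0}u_{0},\tilde t_{0}v_{0})\in\tilde{\mathcal{N}}$ and $\tilde I(\tilde t_{0}u_{0},\tilde t_{0}v_{0})=\max_{t\geq0}\tilde I(tu_{0},tv_{0})$, whence $c_{\tilde{\mathcal{N}}}\leq\max_{t\geq0}\tilde I(tu_{0},tv_{0})$.

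The heart of the proof is a pointwise comparison of $\tilde I$ and $I$ on the ray $t\mapsto(tu_{0},tv_{0})$. The nonlinear parts $\int_{\mathbb{R}}(F_{1}(tu_{0})+F_{2}(tv_{0}))\,\ud x$ are identical for the two functionals, so for every $t>0$
\begin{equation*}
\tilde I(tu_{0},tv_{0})-I(tu_{0},tv_{0})=\frac{t^{2}}{2}\left(-\int_{\mathbb{R}}(V_{1}-\tilde V_{1})u_{0}^{2}\,\ud x-\int_{\mathbb{R}}(V_{2}-\tilde V_{2})v_{0}^{2}\,\ud x-2\int_{\mathbb{R}}(\tilde\lambda-\lambda)u_{0}v_{0}\,\ud x\right).
\end{equation*}
By \ref{paper4A4} the functions $V_{i}-\tilde V_{i}$ and $\tilde\lambda-\lambda$ are strictly positive, and $u_{0}v_{0}\geq0$ because $(u_{0},v_{0})$ is nonnegative, so the third integral is nonnegative; moreover, since $(u_{0},v_{0})\neq(0,0)$, at least one of $\int_{\mathbb{R}}(V_{1}-\tilde V_{1})u_{0}^{2}\,\ud x$ and $\int_{\mathbb{R}}(V_{2}-\tilde V_{2})v_{0}^{2}\,\ud x$ is strictly positive. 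Hence $\tilde I(tu_{0},tv_{0})<I(tu_{0},tv_{0})$ for all $t>0$, and therefore
\begin{equation*}
c_{\tilde{\mathcal{N}}}\leq\max_{t\geq0}\tilde I(tu_{0},tv_{0})<\max_{t\geq0}I(tu_{0},tv_{0})=I(u_{0},v_{0})=c_{\mathcal{N}},
\end{equation*}
which is the assertion of the lemma.

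I expect the only delicate point to be the strictness of the inequality: it relies on the ground state provided by Theorem~\ref{paper4A} being genuinely nonnegative and nontrivial, so that the strictly positive lower bounds $V_{i}-\tilde V_{i}$ from \ref{paper4A4} really do contribute a positive amount after integration against $u_{0}^{2}$ or $v_{0}^{2}$ (and so that the coupling term has the favourable sign). Everything else — the existence of the fibering projection $\tilde t_{0}$ onto $\tilde{\mathcal{N}}$ and the identity $\max_{t\geq0}I(tu_{0},tv_{0})=c_{\mathcal{N}}$ — is immediate from Lemma~\ref{paper4j23} and its verbatim analogue for $\tilde I$, so no new compactness or Trudinger--Moser estimates are needed at this stage.
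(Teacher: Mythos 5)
Your proposal is correct and follows essentially the same route as the paper: project the nonnegative periodic ground state $(u_{0},v_{0})$ onto $\tilde{\mathcal{N}}$ via the fibering map (the $\tilde I$-analogue of Lemma~\ref{paper4j23}), use \ref{paper4A4} together with $u_{0},v_{0}\geq0$ and $(u_{0},v_{0})\neq(0,0)$ to get the strict comparison $\tilde I(tu_{0},tv_{0})<I(tu_{0},tv_{0})$, and conclude with $\max_{t\geq0}I(tu_{0},tv_{0})=I(u_{0},v_{0})=c_{\mathcal{N}}$. The only cosmetic difference is that the paper makes the comparison at the single projection point $t_{0}$ rather than along the whole ray, which is immaterial since your maximum of $\tilde I$ is attained at $\tilde t_{0}>0$.
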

	\begin{proof}
		Let $(u_{0},v_{0})\in \mathcal{N}$ be the nonnegative ground state solution for System~\eqref{paper4j0} obtained by Theorem~\ref{paper4A}. It is easy to see that Lemma~\ref{paper4j23} works for $\tilde{I}$ and $\tilde{\mathcal{N}}$. Thus, there exists a unique $t_{0}>0$, depending only on $(u_{0},v_{0})$, such that $(t_{0}u_{0},t_{0}v_{0})\in\tilde{\mathcal{N}}$. By using \ref{paper4A4} we get
		\begin{align*} 
		\int_{\mathbb{R}}\left[(\tilde{V_{1}}(x)-V_{1}(x))u_{0}^{2}+(\tilde{V_{2}}(x)-V_{2}(x))v_{0}^{2}+(\lambda(x)-\tilde{\lambda}(x))u_{0}v_{0}\right]\ud x<0.
		\end{align*}
		Therefore, $\tilde{I}(t_{0}u_{0},t_{0}v_{0})-I(t_{0}u_{0},t_{0}v_{0})<0$. Since $(u_{0},v_{0})$ is a ground state for System~\eqref{paper4j0} we can use Lemma~\ref{paper4j23} to conclude that
		$$
		c_{\tilde{\mathcal{N}}}\leq\tilde{I}(t_{0}u_{0},t_{0}v_{0})<I(t_{0}u_{0},t_{0}v_{0})\leq\max_{t\geq0}I(tu_{0},tv_{0})=I(u_{0},v_{0})=c_{\mathcal{N}},
		$$   
		and the lemma is proved.
	\end{proof}     
	
	As in the proof of Theorem~\ref{paper4A}, there exists a sequence $(u_{n},v_{n})_{n}\subset\mathcal{N}$ such that
	\begin{equation}\label{paper4jj21}
	\tilde{I}(u_{n},v_{n})\rightarrow c_{\tilde{\mathcal{N}}} \quad \mbox{and} \quad  \tilde{I}'\mid_{\tilde{\mathcal{N}}}(u_{n},v_{n})\rightarrow0.
	\end{equation}    	
	Notice that in the proof of Theorem~\ref{paper4A} the only step we used the periodicity of the potentials was to guarantee that a minimizing sequence converges to a nontrivial limit (see Proposition~\ref{paper4p1}). Thus, Lemma~\ref{paper4principal} remains true for the minimizing sequence obtained above to the asymptotically periodic case. Since $(u_{n},v_{n})_{n}$ is a bounded sequence in $\tilde{E}$, we may assume up to a subsequence that $(u_{n},v_{n})\rightharpoonup(u_{0},v_{0})$ weakly in $\tilde{E}$. The main difficulty is to prove that the weak limit is nontrivial. 
	
	\begin{proposition}\label{paper4p3}
		The weak limit $(u_{0},v_{0})$ of the minimizing sequence $(u_{n},v_{n})_{n}$ is nontrivial.
	\end{proposition}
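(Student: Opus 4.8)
\emph{Proof proposal.} The plan is to argue by contradiction: assume $(u_0,v_0)=(0,0)$ and derive $c_{\mathcal{N}}\le c_{\tilde{\mathcal{N}}}$, which contradicts Lemma~\ref{paper4est}. Since, as observed just before the statement, Lemma~\ref{paper4principal} remains valid for the minimizing sequence in \eqref{paper4jj21}, item~(d) furnishes $(y_n)_n\subset\mathbb{R}$ and constants $\beta,R>0$ with $\liminf_{n}\int_{y_n-R}^{y_n+R}(u_n^2+v_n^2)\;\ud x\ge\beta$; applying H\"older's inequality on an interval of length $2R$ this gives a uniform lower bound $\|(u_n,v_n)\|_{L^q}\ge c_0>0$. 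On the other hand, from $(u_n,v_n)\rightharpoonup(0,0)$ weakly in $\tilde E$, hence in $H^{1/2}(\mathbb{R})\times H^{1/2}(\mathbb{R})$, the compact embedding on bounded intervals yields $(u_n,v_n)\to(0,0)$ in $L^p_{loc}(\mathbb{R})\times L^p_{loc}(\mathbb{R})$ for every $p<\infty$.

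The central step is to show that $\tilde I$ and $I$ agree along the sequence up to $o_n(1)$. Writing
$$
\tilde I(u_n,v_n)-I(u_n,v_n)=\frac12\int_{\mathbb{R}}\big[(\tilde V_1-V_1)u_n^2+(\tilde V_2-V_2)v_n^2\big]\;\ud x-\int_{\mathbb{R}}(\tilde\lambda-\lambda)u_nv_n\;\ud x,
$$
I would bound each integral by splitting $\mathbb{R}$ into $\{|x|\le \rho\}$ and $\{|x|>\rho\}$: on the bounded part the local $L^2$-convergence of $(u_n,v_n)$ to $0$ together with $V_i,\tilde V_i,\lambda,\tilde\lambda\in L^\infty(\mathbb{R})$ (which holds since $0\le\tilde V_i<V_i$ with $V_i$ periodic and in $L^\infty_{loc}$, and $\lambda\le\delta\sqrt{V_1V_2}$, $\tilde\lambda\le\delta\sqrt{\tilde V_1\tilde V_2}$) makes the contribution vanish, while on $\{|x|>\rho\}$ assumption \ref{paper4A4} renders $|V_i-\tilde V_i|$ and $|\tilde\lambda-\lambda|$ uniformly small, so the uniform $L^2$-bound on $(u_n,v_n)$ controls the remainder; the cross term is handled by Cauchy--Schwarz. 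This gives $\tilde I(u_n,v_n)=I(u_n,v_n)+o_n(1)$ and, by the same splitting, $\langle I'(u_n,v_n),(u_n,v_n)\rangle=\langle\tilde I'(u_n,v_n),(u_n,v_n)\rangle+o_n(1)=o_n(1)$ since $(u_n,v_n)\in\tilde{\mathcal{N}}$.

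Next I would project onto the periodic Nehari manifold: by Lemma~\ref{paper4j23} there is $t_n>0$ with $(t_nu_n,t_nv_n)\in\mathcal{N}$ and $I(t_nu_n,t_nv_n)=\max_{t\ge0}I(tu_n,tv_n)$. Inserting $(t_nu_n,t_nv_n)$ into the Nehari identity \eqref{paper4j8} and using \ref{paper4f3}--\ref{paper4f4} (which yield $f_i(s)s\ge\mu_i\vartheta|s|^q$) together with $\|(u_n,v_n)\|_{L^q}\ge c_0$ and the boundedness of $\|(u_n,v_n)\|_E$ gives $t_n^{\,q-2}\|(u_n,v_n)\|_{L^q}^q\le C\|(u_n,v_n)\|_E^2$, hence $(t_n)_n$ is bounded. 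Consequently $[I-\tilde I](t_nu_n,t_nv_n)=t_n^2\,o_n(1)=o_n(1)$, and, since $(u_n,v_n)\in\tilde{\mathcal{N}}$ forces $\max_{t\ge0}\tilde I(tu_n,tv_n)=\tilde I(u_n,v_n)$ by Lemma~\ref{paper4j23} applied to $\tilde I$,
$$
c_{\mathcal{N}}\le I(t_nu_n,t_nv_n)=\tilde I(t_nu_n,t_nv_n)+o_n(1)\le\max_{t\ge0}\tilde I(tu_n,tv_n)+o_n(1)=\tilde I(u_n,v_n)+o_n(1)\longrightarrow c_{\tilde{\mathcal{N}}}.
$$
Thus $c_{\mathcal{N}}\le c_{\tilde{\mathcal{N}}}$, contradicting Lemma~\ref{paper4est}, so $(u_0,v_0)\neq(0,0)$. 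The main obstacle is the second paragraph: proving that the perturbation of the potentials is negligible along a sequence that is only weakly convergent and possibly fails to be tight; this is exactly where asymptotic periodicity \ref{paper4A4} is used, and it must be combined with the local compactness and the uniform $L^2$- and $L^q$-bounds. A secondary technical point is the uniform boundedness of the projection parameters $t_n$.
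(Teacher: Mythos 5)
Your proposal is correct, and it reaches the contradiction $c_{\mathcal{N}}\leq c_{\tilde{\mathcal{N}}}$ by a genuinely different (and somewhat shorter) route than the paper. The first half coincides with the paper's proof: assuming $(u_{0},v_{0})=(0,0)$, the splitting of $\mathbb{R}$ into a bounded part (handled by local compactness) and its complement (handled by \ref{paper4A4}) gives $I(u_{n},v_{n})=\tilde{I}(u_{n},v_{n})+o_{n}(1)=c_{\tilde{\mathcal{N}}}+o_{n}(1)$, and both arguments then project onto $\mathcal{N}$ via Lemma~\ref{paper4j23}, obtaining $(t_{n}u_{n},t_{n}v_{n})\in\mathcal{N}$. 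The divergence is in how the projection parameters are controlled. The paper proves the sharper statement $t_{n}\rightarrow1$ through two claims: $\limsup t_{n}\leq1$ via the strict monotonicity \eqref{paper4jj23}, the shifted sequence from Lemma~\ref{paper4principal}, and Fatou's lemma; and $t_{n}\geq1$ via the monotonicity \eqref{paper4j31} of $\phi_{i}$ together with Lemma~\ref{paper4est}; it then shows $I(t_{n}u_{n},t_{n}v_{n})-I(u_{n},v_{n})=o_{n}(1)$ directly. You instead only establish boundedness of $(t_{n})_{n}$, using \ref{paper4f3}--\ref{paper4f4} (so $f_{i}(s)s\geq\mu_{i}\vartheta|s|^{q}$) in the Nehari identity \eqref{paper4j8} together with the uniform lower bound on $\|(u_{n},v_{n})\|_{L^{q}}$ coming from Lemma~\ref{paper4principal}(d) and H\"older; then the observation that $(u_{n},v_{n})\in\tilde{\mathcal{N}}$ forces $\max_{t\geq0}\tilde{I}(tu_{n},tv_{n})=\tilde{I}(u_{n},v_{n})$ (Lemma~\ref{paper4j23} for $\tilde{I}$, exactly as the paper uses in Lemma~\ref{paper4est}) closes the chain $c_{\mathcal{N}}\leq I(t_{n}u_{n},t_{n}v_{n})=\tilde{I}(t_{n}u_{n},t_{n}v_{n})+o_{n}(1)\leq\tilde{I}(u_{n},v_{n})+o_{n}(1)$. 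This bypasses the two monotonicity claims and the Fatou/shift argument entirely; what it costs is the explicit use of \ref{paper4f4} and of the non-vanishing lower bound at this stage (the paper only needs non-vanishing in its Claim 1), but these are available here, so the argument is sound. Two small points you should make explicit: $(u_{n},v_{n})$ is bounded in $E$ and not merely in $\tilde{E}$ (this follows since $V_{i}$ is periodic and locally bounded, hence globally bounded, and $\tilde{E}_{i}\hookrightarrow L^{2}$ by \ref{paper4A5}), which is needed both to apply Lemma~\ref{paper4j23} in $E$ and in your bound for $t_{n}$; and the constant in your $t_{n}$-estimate must be checked to be independent of $n$, which it is for exactly this reason.
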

	\begin{proof}
		Arguing by contradiction, we suppose that $(u_{0},v_{0})=(0,0)$. We may assume that 
		\begin{itemize}
			\item $u_{n}\rightarrow 0$ and $v_{n}\rightarrow 0$ strongly in $L^{p}_{loc}(\mathbb{R})$, for all $2\leq p<\infty$;
			\item $u_{n}(x)\rightarrow 0$ and $v_{n}(x)\rightarrow 0$ almost everywhere in $\mathbb{R}$.
		\end{itemize}       
		It follows from \ref{paper4A4} that for any $\varepsilon>0$ there exists $R>0$ such that
		\begin{equation}\label{paper4jj2}
		|V_{1}(x)-\tilde{V_{1}}(x)|<\varepsilon, \quad |V_{2}(x)-\tilde{V_{2}}(x)|<\varepsilon, \quad |\tilde{\lambda}(x)-\lambda(x)|<\varepsilon, \quad \mbox{for} \hspace{0,2cm} |x|\geq R.
		\end{equation}
		By using \eqref{paper4jj2} and the local convergence there exists $n_{0}\in\mathbb{N}$ such that for $n\geq n_{0}$ we have
		\begin{eqnarray*}
			\left|\int_{\mathbb{R}}(V_{1}(x)-\tilde{V_{1}}(x))u_{n}^{2}\;\ud x\right| & \leq & \int_{\{|x|< R\}}|V_{1}(x)-\tilde{V_{1}}(x)|u_{n}^{2}\;\ud x+\int_{\{|x|\geq R\}}|V_{1}(x)-\tilde{V_{1}}(x)|u_{n}^{2}\;\ud x\\
			& \leq & (\|V_{1}\|_{L^{\infty}_{loc}}+\|\tilde{V_{1}}\|_{L^{\infty}_{loc}})\|u_{n}\|_{L^{2}(B_{R}(0))}^{2}+ C\varepsilon\|u_{n}\|_{\tilde{E_{1}}}^{2}\\
			& \leq & (\|V_{1}\|_{L^{\infty}_{loc}}+\|\tilde{V_{1}}\|_{L^{\infty}_{loc}})\varepsilon + C\varepsilon.
		\end{eqnarray*}                                                                
		Analogously, we can deduce that 
		$$
		\left|\int_{\mathbb{R}}(V_{2}(x)-\tilde{V_{2}}(x))v_{n}^{2}\;\ud x\right|\leq (\|V_{2}\|_{L^{\infty}_{loc}}+\|\tilde{V_{2}}\|_{L^{\infty}_{loc}})\varepsilon + C\varepsilon.
		$$
		We have also from \eqref{paper4jj2} that
		\begin{eqnarray*}
			\left|\int_{\mathbb{R}}(\tilde{\lambda}(x)-\lambda(x))u_{n}v_{n}\;\ud x\right| & \leq & \int_{\{|x|< R\}}|\tilde{\lambda}(x)-\lambda(x)||u_{n}||v_{n}|\;\ud x+\int_{\{|x|\geq R\}}|\tilde{\lambda}(x)-\lambda(x)||u_{n}||v_{n}|\;\ud x\\
			& \leq & (\|\tilde{\lambda}\|_{L^{\infty}_{loc}}+\|\lambda\|_{L^{\infty}_{loc}})\|u_{n}\|_{L^{2}(B_{R}(0))}\|v_{n}\|_{L^{2}(B_{R}(0))}+ C\varepsilon\|u_{n}\|_{\tilde{E_{1}}}\|v_{n}\|_{\tilde{E_{2}}}\\
			& \leq & (\|\tilde{\lambda}\|_{L^{\infty}_{loc}}+\|\lambda\|_{L^{\infty}_{loc}})\varepsilon + C\varepsilon,
		\end{eqnarray*}
		for $n\geq\tilde{n_{0}}$. Therefore, using the estimates obtained above, we can conclude that
		$$
		I(u_{n},v_{n})-\tilde{I}(u_{n},v_{n})=o_{n}(1) \quad \mbox{and} \quad \langle I'(u_{n},v_{n}),(u_{n},v_{n})\rangle-\langle\tilde{I}'(u_{n},v_{n}),(u_{n},v_{n})\rangle=o_{n}(1),
		$$
		which jointly with \eqref{paper4jj21} implies that
		\begin{equation}\label{paper4jj22}
		I(u_{n},v_{n})=c_{\tilde{\mathcal{N}}}+o_{n}(1) \quad \mbox{and} \quad \langle I'(u_{n},v_{n}),(u_{n},v_{n})\rangle=o_{n}(1).
		\end{equation}
		By using Lemma~\ref{paper4j23}, there exists a sequence $(t_{n})_{n}\subset(0,+\infty)$ such that $(t_{n}u_{n},t_{n}v_{n})_{n}\subset\mathcal{N}$.
		
		\vspace{0,3cm}
		
		\noindent\textit{Claim 1.} $\limsup_{n\rightarrow+\infty}t_{n}\leq 1$.
		
		\vspace{0,3cm}
		
		In fact, we suppose by contradiction that there exists $\varepsilon_{0}>0$ such that, up to a subsequence, we have $t_{n}\geq1+\varepsilon_{0}$, for all $n\in\mathbb{N}$. Combining \eqref{paper4jj22} and the fact that $(t_{n}u_{n},t_{n}v_{n})\subset\mathcal{N}$, we can deduce that
		$$
		\int_{\mathbb{R}}\left(\frac{f_{1}(t_{n}u_{n})u_{n}}{t_{n}}-f_{1}(u_{n})u_{n}\right)\;\ud x+\int_{\mathbb{R}}\left(\frac{f_{2}(t_{n}v_{n})v_{n}}{t_{n}}-f_{2}(v_{n})v_{n}\right)\;\ud x=o_{n}(1).
		$$
		By using \eqref{paper4j12} (see \eqref{paper4jj23}) and the fact that $t_{n}\geq1+\varepsilon_{0}$, we have that
		\begin{equation}\label{paper4jj24}
		\int_{\mathbb{R}}\left(\frac{f_{1}((1+\varepsilon_{0})u_{n})u_{n}}{1+\varepsilon_{0}}-f_{1}(u_{n})u_{n}\right)\;\ud x+\int_{\mathbb{R}}\left(\frac{f_{2}((1+\varepsilon_{0})v_{n})v_{n}}{1+\varepsilon_{0}}-f_{2}(v_{n})v_{n}\right)\;\ud x\leq o_{n}(1).
		\end{equation}
		Arguing similar to the proof of Proposition~\ref{paper4p1} we consider the shift sequence $(\tilde{u}_{n}(x),\tilde{v}_{n}(x))=(u_{n}(x+y_{n}),v_{n}(x+y_{n}))$. The sequence $(\tilde{u}_{n}(x),\tilde{v}_{n}(x))$ is bounded in $\tilde{E}$ and, up to a subsequence, $(\tilde{u}_{n}(x),\tilde{v}_{n}(x))\rightharpoonup(\tilde{u},\tilde{v})$. Therefore,
		$$
		\lim_{n\rightarrow+\infty}\int_{-R}^{R}(\tilde{u}_{n}^{2}+\tilde{v}_{n}^{2})\;\ud x=\lim_{n\rightarrow+\infty}\int_{y_{n}-R}^{y_{n}+R}(u_{n}^{2}+v_{n}^{2})\;\ud x\geq\beta>0,
		$$
		which implies $(\tilde{u},\tilde{v})\neq(0,0)$. Thus, by using \eqref{paper4j12}, \eqref{paper4jj24} and Fatou's lemma, we conclude that
		$$
		0<\int_{\mathbb{R}}\left(\frac{f_{1}((1+\varepsilon_{0})\tilde{u})\tilde{u}}{1+\varepsilon_{0}}-f_{1}(\tilde{u})\tilde{u}\right)\;\ud x+\int_{\mathbb{R}}\left(\frac{f_{2}((1+\varepsilon_{0})\tilde{v})\tilde{v}}{1+\varepsilon_{0}}-f_{2}(\tilde{v})\tilde{v}\right)\;\ud x\leq o_{n}(1),
		$$
		which is not possible and finishes the proof of \textit{Claim 1.}
		
		\vspace{0,3cm}
		
		\noindent\textit{Claim 2.} \textit{There exists $n_{0}\in\mathbb{N}$ such that $t_{n}\geq1$, for $n\geq n_{0}$.}
		
		\vspace{0,3cm}
		
		In fact, arguing by contradiction, we suppose that up to a subsequence, $t_{n}<1$. By using \eqref{paper4j31} and the fact that $(t_{n}u_{n},t_{n}v_{n})_{n}\subset\mathcal{N}$ we have
		$$
		c_{\mathcal{N}} \leq \frac{1}{2}\int_{\mathbb{R}}(\phi_{1}(t_{n}u_{n})+\phi_{2}(t_{n}v_{n}))\;\ud x\leq \frac{1}{2}\int_{\mathbb{R}}(\phi_{1}(u_{n})+\phi_{2}(v_{n}))\;\ud x=c_{\tilde{\mathcal{N}}}+o_{n}(1).
		$$
		Therefore, $c_{\mathcal{N}}\leq c_{\tilde{\mathcal{N}}}$ which contradicts Lemma~\ref{paper4est} and finishes the proof of \textit{Claim 2.}
		
		Combining \textit{Claims} $1$ and $2$, we can deduce that
		$$
		\int_{\mathbb{R}}(F_{1}(t_{n}u_{n})-F_{1}(u_{n})+F_{2}(t_{n}v_{n})-F_{2}(v_{n}))\;\ud x=\int_{1}^{t_{n}}\int_{\mathbb{R}}(f_{1}(\tau u_{n})u_{n}+f_{2}(\tau v_{n})v_{n})\;\ud x\ud\tau=o_{n}(1).
		$$
		Moreover, we have that
		$$
		\frac{t_{n}^{2}-1}{2}\left(\|(u_{n},v_{n})\|_{E}^{2}-2\int_{\mathbb{R}}\lambda(x)u_{n}v_{n}\;\ud x\right)=o_{n}(1).
		$$
		These convergences imply that $I(t_{n}u_{n},t_{n}v_{n})-I(u_{n},v_{n})=o_{n}(1)$. Thus, it follows from \eqref{paper4jj22} that
		$$
		c_{\mathcal{N}}\leq I(t_{n}u_{n},t_{n}v_{n})=I(u_{n},v_{n})+o_{n}(1)=c_{\tilde{\mathcal{N}}}+o_{n}(1),
		$$
		which contradicts Lemma~\ref{paper4est}. Therefore, $(u_{0},v_{0})\neq(0,0)$ and the proposition is proved.
	\end{proof}
	
	\begin{proof}[Proof of Theorem~\ref{paper4B} completed]
		We point out that we did not use the periodicity on the potentials $V_{i}(x)$ and $\lambda(x)$ to prove Proposition~\ref{paper4p2}. Thus, since $(u_{0},v_{0})\neq(0,0)$, we can repeat the same proof to conclude that $(u_{0},v_{0})\in\tilde{\mathcal{N}}$. Therefore, we have $c_{\tilde{\mathcal{N}}}\leq I(u_{0},v_{0})$. On the other hand, by using \eqref{paper4j14} and similar arguments as used before, we deduce that
		\begin{eqnarray*}
			c_{\tilde{\mathcal{N}}}+o_{n}(1) & = & \frac{1}{2}\int_{\mathbb{R}}(\phi_{1}(u_{n})+\phi_{2}(v_{n}))\;\ud x\\
			                                 & \geq & \frac{1}{2}\int_{\mathbb{R}}(\phi_{1}(u_{0})+\phi_{2}(v_{0}))\;\ud x+o_{n}(1)\\
			                                 & = & \tilde{I}(u_{0},v_{0})+o_{n}(1),
	    \end{eqnarray*}		
		which implies that $c_{\tilde{\mathcal{N}}}\geq I(u_{0},v_{0})$. Therefore $\tilde{I}(u_{0},v_{0})=c_{\mathcal{N}}$. Repeating the same argument used in the proof of Theorem~\ref{paper4A}, we can deduce that there exists $t_{0}>0$ such that $(t_{0}|u_{0}|,t_{0}|v_{0}|)\in\tilde{\mathcal{N}}$ is a ground state solution for System~\eqref{paper4j00} which finishes the proof of Theorem~\ref{paper4B}.    
	\end{proof}
	
	\begin{remark}
		Let $\tilde{\mathcal{K}}$ be the set of all ground state solutions for System~\eqref{paper4j00}, that is,
		$$
		\tilde{\mathcal{K}}:=\{(u,v)\in \tilde{E}: (u,v)\in\tilde{\mathcal{N}}, \ \tilde{I}(u,v)=c_{\tilde{\mathcal{N}}} \ \mbox{and} \  \tilde{I}'(u,v)=0\}.
		$$
		Using Proposition~\ref{paper441} instead Proposition~\ref{paper4p1}, we can apply a similar argument used in Remark~\ref{paper4p3} , with $I$ replaced by $\tilde{I}$, to conclude that $\tilde{\mathcal{K}}$ is a compact set in $\tilde{E}$.
	\end{remark}
	
	\begin{remark}
		For the local case when System~\eqref{paper4j0} involves the standard Laplacian $-\Delta$ and its defined in the plane $\mathbb{R}^{2}$, the assumption \ref{paper4f4} could be replaced by the following condition
		 \begin{equation}\label{paper4j55}
		  \liminf_{n\rightarrow+\infty} \frac{sf_{i}(s)}{e^{\alpha_{0}^{i}s^{2}}}\geq\beta_{0}>\frac{2e}{\alpha_{0}}.
		 \end{equation}
		In this case, the existence result would not have any dependence of constants. In fact, with the aid of \eqref{paper4j55} we can prove that the ground state energy associated to System~\eqref{paper4j0} is strictly less than $2\pi/\alpha_{0}$. For this purpose, it is consider the following Moser's sequence of functions
		\begin{equation*}
		\omega_{n}(x)=\frac{1}{\sqrt{2\pi}}\left\{
		\begin{array}{clc}
		\sqrt{\log(n)} & \mbox{if} & |x|\leq \displaystyle\frac{r}{n},\\
		\displaystyle\frac{\log\left(r/|x|\right)}{\sqrt{\log(n)}} & \mbox{if} & \displaystyle\frac{r}{n}\leq |x| \leq r,\\
		0 & \mbox{if} & |x|\geq r.
		\end{array}
		\right.
		\end{equation*}
	   For existence results in this direction we refer the readers to \cite{severo,djairo,djr}. 
	   An interesting question is to prove the existence of ground states for \eqref{paper4j0} under a condition of type \eqref{paper4j55}.
	   For that it is crucial to build a Moser's sequence for the fractional case.
	\end{remark}     	
	
	\begin{remark}				
		The main goal of the paper was to prove the existence of ground states for Systems~\eqref{paper4j0} and \eqref{paper4j00}, when the constant $\vartheta$ introduced in \ref{paper4f4} is large enough. In the lemma \ref{paper4nehari}, we proved that the norm of any element that belongs to the Nehari manifold is greater or equal to a positive constant $\rho$, which is strictly less than $\kappa\omega/\alpha_{0}$. However, we note by Lemma~\ref{paper4principal}~\ref{paper4c} that the norm of the minimizing sequence is so small as we want, and it is controlled by the choice of $\vartheta$. Thus, our proof holds for any $\vartheta$ contained in a bounded interval of the real line. Let us consider, for instance,
		$$
		\vartheta^{*}:=\sup\{\vartheta\in\mathbb{R}:\eqref{paper4j0} \ \mbox{has ground states}\}.
		$$		
		Naturally, it arises the following questions: $\vartheta^{*}$ is finite? If $\vartheta^{*}$ is finite, then there exists ground states at $\vartheta=\vartheta^{*}$?
	\end{remark}
	
	\bigskip
	\medskip
	

	\bigskip	
\end{document}